\documentclass[12pt]{article}
\usepackage{graphicx}
\usepackage{picture}
\usepackage{color}
\usepackage[english]{babel}
\usepackage{amsmath}
\usepackage{amssymb}
\usepackage{enumerate}
\usepackage{amsfonts}
\usepackage{layout}

\usepackage{pgf,tikz}
\usetikzlibrary{arrows}
\usepackage{caption}
\usepackage{hyperref}
\hypersetup{colorlinks=false, linkbordercolor={1 1 1}, citebordercolor={1 1 1}, urlbordercolor={1 1 1}} 

\usepackage[hmargin=25mm,top=20mm,bottom=25mm]{geometry}     
\usepackage{latexsym}
\usepackage{amsthm}
\newtheorem{theorem}{Theorem}
\newtheorem{lemma}[theorem]{Lemma}
\newtheorem{corollary}[theorem]{Corollary}

\newtheorem{proposition}[theorem]{Proposition}

\newcommand{\nn}{\nonumber}
\newcommand{\dis}{\displaystyle}

\newcommand{\ux}{X}
\newcommand{\hux}{\hat X}
\newcommand{\huy}{\hat Y}
\newcommand{\hY}{\hat Y}

\newcommand{\uy}{Y}
\newcommand{\uz}{Z}
\newcommand{\usi}{\sigma}

\newcommand{\ro}{\rho}

\newcommand{\R}{\mathbb R}

\newcommand{\N}{\mathbb N}

\newcommand{\cL}{\mathcal{L}}

\newcommand{\cB}{\mathcal{B}}

\newcommand{\one}{{\mathbf 1}}

\parskip 2mm

\let\originalleft\left
\let\originalright\right
\renewcommand{\left}{\mathopen{}\mathclose\bgroup\originalleft}
\renewcommand{\right}{\aftergroup\egroup\originalright}
\newcommand{\llav}[1]{  \left\{#1\right\} }

\newcommand{\norm}[1]{  \left\|#1\right\| }
\newcommand{\pare}[1]{\left(#1\right)}
\newcommand{\corch}[1]{  \left[#1\right] }
\newcommand{\abs}[1]{  \left|#1\right| }
\renewcommand{\upsilon}{\psi}
\def\defi{\mathrel{\mathop:}=}

\numberwithin{equation}{section}

\def\cle{\preccurlyeq}
\def\cge{\succcurlyeq}

\begin{document}

\title{Hydrodynamics of the $N$-BBM process}



\author{Anna De Masi \and Pablo A. Ferrari \and Errico Presutti \and Nahuel Soprano-Loto}



\date{}

\maketitle
\parindent 0pt

\paragraph{Abstract} The Branching Brownian Motions (BBM) are particles performing independent Brownian motions in $\R$ and each particle at rate 1 creates a new particle at her current position; the newborn particle increments and branchings are independent of the other particles. The $N$-BBM starts with $N$ particles and at each branching time, the leftmost particle is removed so that the total number of particles is $N$ for all times. The $N$-BBM was proposed by Maillard and belongs to a family of processes introduced by Brunet and Derrida. We fix a density $\rho$ with a left boundary $L=\sup\{r\in\mathbb R: \int_r^\infty \rho(x)dx=1\}>-\infty$ and let the initial particle positions be iid continuous random variables with density $\rho$. We show that the empirical measure associated to the particle positions at a fixed time $t$ converges to an absolutely continuous measure with density $\psi(\cdot,t)$, as $N\to\infty$. The limit $\psi$ is solution of a free boundary problem (FBP) when this solution exists. The existence of solutions for finite time-intervals has been recently proved by Lee.   

\paragraph{Keywords} Hydrodynamic limit. Free boundary problems. Branching Brownian motion. Brunet-Derrida systems.

\paragraph{Mathematics Subject Classification} 60K35 82C20
%
%
%
%
%
%
%
%
%
%
%
%
%


\section{Introduction}
\label{s1}
This is a version of a previous work by the same authors based on a talk
by the second author at the Institut Henri Poincar\'e in  June 2017.

Brunet and Derrida \cite{brunet-derrida} proposed a family of one dimensional processes with $N$ branching particles with selection.
Start with $N$ particles with positions in $\R$.
At each discrete time $t$, there are two steps.
In the first step, each particle creates a number of descendants at positions chosen according to some density as follows: if a particle is located at position $x$, then its descendants are iid with distribution $Y+x$, where $Y$ is a random variable with a given density.
The second step is to keep the $N$ right-most particles, erasing the left-most remaining ones. 

The study of $N$-Branching Brownian motions was proposed by Pascal Maillard \cite{MR3568046,MR3088376} as a natural continuous time version of the previous process, also related with the celebrated Branching Brownian motion process, that we abbreviate by BBM.
The $N$-BBM move as independent Brownian motions, and each particle at rate one creates a new particle at its current position. When a new particle is created, the leftmost particle is removed. The number $N$ of particles is then conserved.

The particles are initially distributed as independent random variables with an absolutely continuous distribution whose density is called $\ro$.
Let $\ux_t=\{\ux^1_t,\dots,\ux^N_t\}$ be the set of positions of the $N$ particles at time $t$. Here and in the sequel we will consider multi-sets, allowing repetitions of elements. Denote $|A|$ the cardinal of a discrete set $A$. 
The empirical distribution induced by $\ux_t$ is defined by
	 \begin{equation}
\label{pit}
\pi_t^{N}[a,\infty):= \frac 1N \abs{\ux_t\cap [a,\infty)},
   \end{equation}
the proportion of particles to the right of $a$ at time $t$. Our main result is the following hydrodynamic limit.

\begin{theorem}
\label{hydrodynamics}
Let $\ro\in L^1(\R,\R^+)$ be a probability density function satisfying $\|\rho\|_\infty<\infty$ and $L_0:=\sup_{r}\big\{\int_{-\infty}^r\ro(x)dx=0\big\}>-\infty$.
Let $\ux^1_0,\dots,\ux^N_0$ be independent identically distributed continuous random variables with density $\ro$. Let $\ux_t$ be the positions at time $t$ of  $N$-\nobreak{\rm BBM} starting at $\ux_0=\{\ux^1_0,\dots,\ux^N_0\}$.
For every $t\ge 0$, there is a density function $\psi(\cdot,t):\R\to\R^+$ such that, for any $a\in \R$, we have
 	\begin{equation*}
\lim_{N\to \infty} \int_a^\infty \pi_t^{N}(dr) =\int_a^\infty \psi(r,t)dr,\quad \hbox{a.s.\/ and in $L^1$.}
   \end{equation*}
	\end{theorem}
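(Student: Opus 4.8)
The plan is to couple the $N$-BBM with an auxiliary "branching Brownian motion with killing at a free boundary" and pass to the limit through a sequence of approximations. The natural strategy is the one pioneered by Durrett–Remenik and developed further in the $L$-particle literature: first replace the selection mechanism (remove the leftmost particle at each branching) by a cut at a deterministic level, then control the fluctuation of that level, then identify the limit of the macroscopic profile as the solution of the free boundary problem when it exists. I would organize the argument around the ordered cumulative distribution function $G^N_t(r) := \pi^N_t[r,\infty)$ and show that $G^N_t \to G_t := \int_r^\infty \psi(x,t)\,dx$ uniformly in $r$, which is equivalent to the stated convergence of the tail integrals.

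First I would establish monotonicity and a priori bounds: the $N$-BBM preserves the ordering of configurations under the coupling in which the same Brownian increments and branching clocks are used, so $G^N_t$ is monotone in the initial data, and since $\|\rho\|_\infty<\infty$ one gets a uniform $L^\infty$ bound on the empirical density that propagates in time (branching at rate one and removal can only redistribute mass, and the Brownian part is a contraction on $L^\infty$ of the density in the relevant sense). Second, I would introduce the free boundary problem explicitly: $\partial_t u = \tfrac12\partial_{xx} u + u$ on $\{x>\beta(t)\}$ with $u(\beta(t),t)=0$ and the mass constraint $\int_{\beta(t)}^\infty u(x,t)\,dx = 1$ for all $t$, $u(\cdot,0)=\rho$; the moving boundary $\beta(t)$ is determined by the constraint that total mass stays $1$, exactly mirroring the rule that the leftmost particle is deleted precisely when a new one is born. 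By the cited result of Lee, this FBP has a solution on every finite time interval, and I would take $\psi$ to be that solution (on the time interval where it exists; for larger times one argues by a separate, weaker compactness argument, or notes the solution is in fact global).

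Third — and this is the technical heart — I would show the hydrodynamic limit by a sandwiching/barrier argument. Fix a time horizon $T$ and a small $\delta>0$. Discretize $[0,T]$ into steps of size $\delta$; on each step, run the genuine BBM (no selection) from the current empirical measure, which converges by the classical law of large numbers for BBM to the solution of $\partial_t v = \tfrac12\partial_{xx}v + v$; then apply the selection, which to leading order removes a mass $\approx \delta$ from the left and shifts the boundary. Comparing the $N$-particle evolution with the semigroup evolution of the FBP over one step, using the monotone coupling to build upper and lower barriers (configurations that dominate resp. are dominated by $\ux_t$ and whose empirical measures converge to explicit perturbations of $\psi$), and then sending $N\to\infty$, $\delta\to0$, yields $G_t^N \to G_t$. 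The almost sure statement comes from the monotone coupling together with Borel–Cantelli along subsequences, and the $L^1$ convergence then follows from dominated convergence using the uniform density bound.

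The main obstacle I anticipate is controlling the selection step quantitatively: the removal of the leftmost particle is a highly singular, nonlocal operation, and over a macroscopic time interval one must show that the accumulated effect of these removals converges to the smooth motion of $\beta(t)$ rather than producing a boundary layer or an $O(1)$ error. Concretely, one needs a uniform (in $N$) modulus-of-continuity estimate for the leftmost particle and a comparison showing that the number of particles removed in $[t,t+\delta]$ is $\delta N + o(N)$ with the removed particles confined to an $o(1)$ neighborhood of $\beta(t)$; establishing this requires careful use of the $L^\infty$ density bound near the boundary, and is where the hypothesis $L_0 > -\infty$ (a genuine left edge, so no mass escapes to $-\infty$) is essential.
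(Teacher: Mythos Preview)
Your sandwiching-by-barriers strategy is exactly the one the paper uses, so the overall architecture is right. But two points in your proposal are genuine gaps relative to what Theorem~\ref{hydrodynamics} actually asserts.

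First, you define $\psi$ as the solution of the free boundary problem and invoke Lee's result. This does not suffice: Lee only gives local-in-time existence, and only under the extra regularity $\rho\in C^2_c$ with $\rho'(L_0)=2$, whereas Theorem~\ref{hydrodynamics} is stated for \emph{all} $t\ge 0$ and for $\rho$ merely bounded with a left edge. Global existence of the FBP is explicitly left open in the paper. The paper therefore does \emph{not} construct $\psi$ from the FBP; it constructs $\psi$ directly as the common $L^1$ limit of the deterministic barriers $S^{\delta,\pm}_t\rho$ (iterates of ``diffuse-then-cut'' and ``cut-then-diffuse''), proving monotonicity of each sequence in $\delta$ along dyadics and the analytic estimate $\|S^{\delta,+}_t\rho - S^{\delta,-}_t\rho\|_1 \le c\delta$. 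Identifying $\psi$ with the FBP solution is a separate theorem (Theorem~\ref{u-existence}), valid only on intervals where the latter exists.

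Second, the ``main obstacle'' you isolate --- a modulus-of-continuity estimate for the leftmost particle and control of how many particles are removed near $\beta(t)$ --- is precisely what the barrier method is designed to \emph{bypass}, and the paper never proves such an estimate. The actual technical heart is elsewhere: (i) the \emph{lower} barrier coupling. The upper domination $X_{k\delta}\preccurlyeq X^{\delta,+}_{k\delta}$ is easy, but the paper stresses that the lower bound does not follow from standard arguments; it requires a labelled ``rank-selected'' auxiliary process and a trajectory-wise coupling (Section~\ref{s3}) to get $X^{\delta,-}_{k\delta}\preccurlyeq X_{k\delta}$. (ii) The convergence of the deterministic barriers to a common limit as $\delta\to 0$, which is a purely analytic argument on the operators $G_\delta$ and $C_m$ (Section~\ref{s5}). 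Your outline does not supply either of these, and your proposed substitute (direct control of the boundary) is the approach the paper explicitly says is spoiled by the free boundary.
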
 
In Theorem \ref{u-existence} below, we identify  the function $\psi(r,t)$ as  the local solution $u(r,t)$ of the following free boundary problem.

\paragraph{Free boundary problem (FBP).} Find $(u,L)\equiv((u(\cdot,t), L_t):t \in [0,T])$ such that:
\begin{eqnarray}
&&u_t= \frac 12  u_{rr} +u,\qquad  {\rm in}\;\; (L_t,+\infty);\label{u2}\\
&&u(r,0)= \ro(r);\qquad  \ro(r)=0 \text{ for }r\le L_0, \int_{L_0}^\infty \ro(r)dr=1\label{u3}\\
&&u(L_t,t)=0,\qquad\int_{L_t}^\infty u(r,t)dr=1. \label{u4}
\end{eqnarray}
Observe that if $(u,L)$ is a solution such that the right derivative $u_r(L_t,t)$ exists, then by \eqref{u4} we have $\frac 12 u_r(L_t,t) =\int_{L_t}^\infty u(r,t)dr$. Furthermore, if the second right derivative exists, then $\dot L_t = -\frac12 u_{rr}(L_t, t)$. Berestycki, Brunet and Derrida \cite{bbd} propose a family of free boundary problems which include this one and give an explicit relation between $\rho$ and $L$, under certain conditions. 


Lee \cite{JLee} proved that if $\ro\in C^2_c([L_0,\infty))$ and $\rho'_{L_0}=2$ then there exist $T>0$ and a solution $(u,L)$ of the  free boundary problem \eqref{u2}-\eqref{u4} with the following properties. The curve $\{L_t: t\in[0,T]\}$ is in $C^1$, $L_{t=0}=L_0$  and the function $u\in C(\overline{D_{L,T}})\cap C^{2,1}(D_{L,T})$ where $D_{L,T}=\{(r,t):L_t<r, 0<t<T\}$.
\begin{theorem}
\label{u-existence}
Let $((u(\cdot,t),L_t):t\in[0,T])$ be a solution of the free boundary problem \eqref{u2}-\eqref{u4} with $L$ continuous. Then the function $\psi$ defined in Theorem \ref{hydrodynamics} coincides with $u$ in that time interval: $\psi(\cdot,t) = u(\cdot,t), t\in[0,T]$. In particular, $(u,L)$ is the unique solution in that time interval. 
\end{theorem}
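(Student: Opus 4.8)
The plan is to show that any solution $(u,L)$ of the free boundary problem with $L$ continuous must coincide with the hydrodynamic limit $\psi$ already constructed in Theorem \ref{hydrodynamics}; uniqueness is then immediate since $\psi$ does not depend on the choice of solution. The natural strategy is a sandwiching (coupling) argument: approximate the $N$-BBM from above and below by auxiliary branching-selection processes whose hydrodynamic limits can be computed directly and shown to converge, as the discretization is refined, to $u$. Concretely, I would fix the solution $(u,L)$ on $[0,T]$, partition $[0,T]$ into small intervals of length $\delta$, and on each interval replace the moving barrier $L_t$ by a constant barrier — using $\inf_{s\in[\text{interval}]}L_s$ for the lower bound and $\sup$ for the upper bound (or more cleverly, barriers placed slightly outside/inside the curve). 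Between barrier updates the particles perform free BBM (heat flow plus mass creation at rate one), and at update times one removes the appropriate leftmost mass so that total mass is $1$. Because $L$ is continuous, $\sup-\inf$ over each subinterval tends to $0$ as $\delta\to 0$, so the upper and lower approximations pinch together.

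The key steps, in order, are: (i) Set up the coupling so that the true $N$-BBM is stochastically dominated by the "outer-barrier" process and dominates the "inner-barrier" process, in the sense of the ordering of empirical distribution functions $\pi_t^N[a,\infty)$; this uses the monotonicity of $N$-BBM-type dynamics in the position of the killing barrier, which should be established by a direct graphical/coupling construction on the same Brownian drivers. (ii) Identify the hydrodynamic limit of each barrier process: with a frozen barrier on a time interval, the limit evolves by $v_t=\tfrac12 v_{rr}+v$ on $(b,\infty)$ with Dirichlet condition at the frozen point $b$, followed by a deterministic "cut from the left" to restore unit mass; this is a standard heat-equation hydrodynamic limit for independent-particle systems (the branching just multiplies mass) combined with an explicit order-statistics computation for the selection step. (iii) Show that as $\delta\to 0$ these piecewise-frozen evolutions converge, uniformly on $[0,T]$ in an appropriate metric on distribution functions, to the solution $u$ of the FBP — this is a stability/continuity estimate for the free boundary problem, exploiting that $u(L_t,t)=0$ and the continuity of $L$. (iv) Combine: $\psi$ is squeezed between the two limits, both of which converge to $u$, hence $\psi=u$ on $[0,T]$, and uniqueness follows.

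The main obstacle I expect is step (iii): controlling the error between the genuine free-boundary evolution and its piecewise-constant-barrier approximations without assuming more regularity on $(u,L)$ than continuity of $L$. One has to propagate an $L^1$- or uniform-in-$a$ bound on $|\text{approx}_t[a,\infty)-u(\cdot,t)[a,\infty)|$ across the alternation of heat-semigroup steps and nonlinear "cut" steps; the cut step is $1$-Lipschitz in the right metric, and the heat step with a slightly displaced Dirichlet barrier differs from the true one by something controlled by $\sup_{s}|L_s-b|$ together with a bound on the mass of $u$ near the boundary (which vanishes because $u(L_t,t)=0$ and $u$ is continuous up to the boundary). A secondary technical point is making the stochastic domination in step (i) fully rigorous for the $N$-BBM with its simultaneous branch-and-kill rule, and checking that the frozen-barrier processes genuinely bracket the true process despite the killing being of the "leftmost particle" type rather than a fixed spatial barrier; this requires phrasing the comparison at the level of the ordered particle configurations and verifying the order is preserved by both the Brownian motion and the branch/selection events.
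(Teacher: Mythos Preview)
Your sandwiching strategy is right in spirit and is what the paper does, but your specific implementation has a gap in step~(i) that is more serious than you suggest. You propose to bracket the $N$-BBM by auxiliary processes whose selection uses a \emph{fixed spatial barrier} placed at $\inf$ or $\sup$ of the macroscopic curve $L$ over each subinterval. But $N$-BBM selects by \emph{rank} (remove the leftmost particle at each branching, wherever it happens to be), and there is no a~priori relation between the position of its leftmost particle and the deterministic curve $L$ --- that relation is essentially what one is trying to prove. So the stochastic domination you need in step~(i) does not follow from a monotonicity-in-barrier-position argument; making it work would require already knowing that the empirical left edge of the $N$-BBM tracks $L$, which is circular. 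You flag this as a ``secondary technical point,'' but it is in fact the obstruction.

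The paper avoids this by choosing barriers that make no reference to any FBP solution. The deterministic barriers $S^{\delta,\pm}_t\rho$ (diffuse-and-grow for time $\delta$ then cut to mass~$1$; respectively cut to mass $e^{-\delta}$ then diffuse-and-grow) are defined intrinsically, and their role in sandwiching the $N$-BBM and converging to $\psi$ is already established in the proof of Theorem~\ref{hydrodynamics} (Proposition~\ref{domination}, Theorems~\ref{thm4} and~\ref{thm5}). The proof of Theorem~\ref{u-existence} is then purely deterministic and does not revisit the particle system at all: one shows (Theorem~\ref{teo18}) that any FBP solution $u(\cdot,t)=S_t\rho$ with continuous $L$ also satisfies $S^{\delta,-}_t\rho \preccurlyeq S_t\rho \preccurlyeq S^{\delta,+}_t\rho$, via the Brownian representation~\eqref{urtbm}. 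The upper inequality is immediate from $\{\tau^L>\delta\}\subset\{B_\delta\ge r\}^c\cup\{B_\delta\ge r\}$; the lower one reduces, after a mass-balance identity coming from $\int u=1$, to the comparison $P_{L_s,s}(B_\delta\ge r)\le P_x(B_\delta\ge r\mid \tau^L>\delta)$ for $x>L_0$ and $s\in[0,\delta)$, which holds whenever $L$ is continuous. Since $S^{\delta,\pm}_t\rho\to\psi$ is already known, this gives $u=\psi$; your step~(iii) stability estimate is thus replaced by the already-proved $\|S^{\delta,+}_t\rho - S^{\delta,-}_t\rho\|_1 \le c\delta$.
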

Theorem \ref{u-existence} together with Lee's result imply that under Lee conditions on $\rho$, the empirical measure of the $N$-BBM starting with iid with density $\rho$ converges to the solution of FBP in the sense of Theorem \ref{hydrodynamics} in the time interval $[0,T]$. To obtain the convergence in any time interval and for an arbitrary density it suffices to show that there exists a solution $(u,L)$ for $t\in\R^+$ with $L$ continuous. However this is an open problem, see \cite{bbd}.

We observe that if  $(u,L)$ is a solution for the FBP and $L$ is continuous, then we have the following Brownian motion representations of the solutions. 
\begin{align}
\label{urtbm}
  \int_a^\infty u(r,t) dr = e^t\int \rho(x) P_x(B_t>a, \tau^L>t)dx,  
\end{align}
where $P_x$ is the law of Brownian motion with initial position $B_0=x$ and $\tau^L:= \inf\{s: B_s\le L_s\}$ is the hitting time of $L$. Taking $a=-\infty$ in \eqref{urtbm} we get
\begin{align}
\label{emt}
   e^t\int \rho(x) P_x(\tau^L>t)dx =1.
\end{align}
which implies that $\tau^L$ is an exponential random variable of mean 1. In other words, we are looking for a continuous curve $L$ such that if Brownian motion starts with a random initial position with density $\rho$, the hitting time of $L$ is exponentially distributed.

 The density solution has a backwards representation:
\begin{align}
\label{urtbm2}
  u(x,t) = e^tE_x \big[\rho(B_t) \one\{B_s> L_{t-s}: 0\le s\le t\}\big].
\end{align}
The presence of the free boundary at the left-most particle spoils usual hydrodynamic proofs.
We overcome the difficulty by dominating the process from below and above by auxiliary more tractable processes, a kind of Trotter-Kato approximation. Durrett and Remenik \cite{MR2932664} use an upperbound to show the analogous to Theorem \ref{hydrodynamics} for a continuous time Brunet-Derrida model. The approach with upper and lower bounds is used by three of the authors in \cite{MR3304749} and by Carinci, De Masi, Giardina and Presutti  \cite{MR3198665}, \cite{MR3282863}, see the survey \cite{MR3497333}; a further example is \cite{MR3336872}.  Maillard \cite{MR3568046} used upper and lower bounds with a different scaling and scope. In Durrett and Remenik the leftmost particle motion is increasing and has natural lower bounds. The lower bounds used in the mentioned papers do not work out-of-the-box here. We  introduce labelled versions of the processes and a trajectory-wise coupling to prove the lowerbound in Proposition \ref{domination} later. 

In Section \ref{s2} we introduce the elements of the proof of hydrodynamics, based on approximating barriers that will dominate the solution from above and below. In Section \ref{s3} we construct the coupling to show the dominations. In Section \ref{s4} we show the hydrodynamics for the barriers. In Section \ref{s5} is devoted to the proof of the existence of the limiting density $\psi$. Section \ref{s6} we prove Theorem \ref{hydrodynamics}. Section \ref{s7} proves Theorem \ref{u-existence}. Finally in Section \ref{s8} we state a Theorem for fixed $N$ establishing the existence of a unique invariant measure for the process as seen from the leftmost particle and a description of the traveling wave solutions for the FBP.

\section{Domination and barriers}
\label{s2}

We define the $N$-BBM process and the limiting barriers as functions of a ranked version of the BBM process.
\paragraph{Ranked BBM.}
Denote by $(B^{1,1}_0, \dots, B^{N,1}_0)$ the initial positions of the BBM starting with $N$ particles. 
Let $N_t^i$ be the size of the $i$-th  BBM family (starting at $B_0^{i,1}$).
For $1\le j\le N_t^i$, let $B_t^{i,j}$ be the position of the $j$-th member of the $i$th family,
 ordered by birth time. Call $(i,j)$ the \emph{rank} of this particle and 
denote
\begin{align}
\label{bijt}
  B^{i,j}_{[0,t]} := \hbox{ trajectory of the $j$-th offspring with initial particle $i$ in the interval $[0,t]$},
\end{align}
with the convention that, before its birth time, the trajectory coincides with those of its ancestors. Define the \emph{ranked} BBM as
\begin{align}
\label{cB}
  \cB := \big(B^{i,j}_{[0,\infty)} : i\in\{1,\dots,N\}, j\in\mathbb N\big).
\end{align}
The positions occupied by the particles at time $t$  
\begin{align}
\label{zbt}
  \uz_t(\cB)&=\{ B^{i,j}_t: 1\le i\le N,\,1\le j\le N^i_t\}
\end{align}
is the BBM.
We drop the dependence on $\cB$ when it is clear from the context. 
\paragraph{$N$-BBM as function of the ranked BBM.} Let $\tau_n$ be the branching times of BBM. We define $L_{\tau_n}$ iteratively: let $X_0=Z_0$, $\tau_0=0$ and
\begin{align}
L_{\tau_n}&:= a\in Z_{\tau_n}\hbox{ such that } \sum_{i=1}^N\sum_{j=1}^{N^i_{\tau_n}}\one\{B^{i,j}_{\tau_{n-1}}\in X_{\tau_{n-1}}: B^{i,j}_{\tau_n}\ge a\}=N\nn \\
  X_{\tau_n}&:= \{B^{i,j}_{\tau_n}: B^{i,j}_{\tau_{n-1}}\in X_{\tau_{n-1}}\hbox{ and } B^{i,j}_{\tau_n}\ge L_{\tau_n}\}, \label{xt1} 
\end{align}
with the convention that, if the branching point at time $\tau_n$ is at $L_{\tau_n}$, and $B^{i,j}_{\tau_n}=B^{i,j'}_{\tau_n}$, $j>j'$, are the two offprings at that time, only $B^{i,j}_{\tau_n}\ge L_{\tau_n}$ while we abuse notation by declaring $B^{i,j'}_{\tau_n}< L_{\tau_n}$.
The process 
\begin{align}
  X_t(\cB)&:=\{B^{i,j}_t: B^{i,j}_{\tau_{\ell}}\ge L_{\tau_\ell}\hbox{ for all } \tau_\ell\le t\} 
       \label{xt7}
\end{align}
is a version of the $N$-BBM described in the introduction.
\paragraph{Stochastic barriers.}
For each positive real number $\delta$, we define the \emph{stochastic barriers}.
These are discrete-time processes denoted by $\ux^{\delta,+}_{k\delta}$ and $\ux^{\delta,-}_{k\delta}$, $k\in\N$, with initial configurations $\ux^{\delta,\pm}_0=\uz_0$.
Iteratively, assume $\ux^{\delta,\pm}_{(k-1)\delta}\subset \uz_{(k-1)\delta}$ is defined. The barriers at time $k\delta$ are selected points of $\uz_{k\delta}$ of cardinal at most $N$ that are defined as follows.  

\emph{The upper barrier}. The selected points at time $k\delta$ are the $N$ rightmost offsprings of the families of the selected points at time $(k-1)\delta$.
The cutting point and the corresponding selected set at time $k\delta$ are
\begin{align}
   L^{\delta,+}_{k\delta}&:= a\in \uz_{k\delta} \hbox{ such that } \sum_{i=1}^N\sum_{j=1}^{N^i_{k\delta}} \one\big\{B^{i,j}_{(k-1)\delta}\in \ux^{\delta,+}_{(k-1)\delta},\,B^{i,j}_{k\delta}\ge a\big\} = N \nn\\
\label{2-2}
    \ux^{\delta,+}_{k\delta}&\defi  \big\{B^{i,j}_{k\delta} :B^{i,j}_{(k-1)\delta}\in \ux^{\delta,+}_{(k-1)\delta}\hbox{ and } B^{i,j}_{k\delta}\ge L^{\delta,+}_{k\delta}\big\}.
  \end{align}
The number of particles in $\ux^{\delta,+}_{k\delta}$ is exactly $N$ for all $k$.

\emph{The lower barrier}.
The selection is realized at time $(k-1)\delta$. Cut particles from left to right at time $(k-1)\delta$ until the largest possible number non bigger than $N$ of particles is kept at time $k\delta$. The cutting point at time $(k-1)\delta$ and the resulting set at time $k\delta$ are given by
\begin{align}
 L^{\delta,-}_{(k-1)\delta}&\defi \min\Bigl\{ a\in  \ux^{\delta,-}_{(k-1)\delta}: \sum_{i=1}^N\sum_{j=1}^{N^i_{k\delta}} \one\big\{B^{i,j}_{(k-1)\delta}\in \ux^{\delta,-}_{(k-1)\delta}\hbox{ and } B^{i,j}_{(k-1)\delta}\ge a\big\}\le N\Bigr\}
\nn\\   
\ux^{\delta,-}_{k\delta}&\defi  \big\{B^{i,j}_{k\delta}: B^{i,j}_{(k-1)\delta} \in \ux^{\delta,-}_{(k-1)\delta}\hbox{ and }  B^{i,j}_{(k-1)\delta}\ge L^{\delta,-}_{(k-1)\delta}\big\}.\label{2-3}
\end{align}
Since entire families are cut at time $(k-1)\delta$, it is not always possible to keep exactly $N$ particles at time $k\delta$.
Hence, for fixed $\delta$, the number of particles in $\ux^{\delta,-}_{k\delta}$ is $N-O(1)$, where $O(1)$ is non-negative and its law converges as $N\to\infty$ to the law of the age of a renewal process with inter-renewal intervals distributed as $N_\delta$, the one-particle family size at time $\delta$. The age law is the size-biased law of $N_\delta$. Since $N_\delta$ has all moments finite, this implies that $O(1)/N$ goes to zero almost surely and in $L_1$.  

We have the following expression for the barriers as a function of the ranked BBM $\cB$:
\begin{align}
  \ux^{\delta,-}_{k\delta} (\cB)
&= \big\{ B^{i,j}_{k\delta}: B^{i,j}_{\ell\delta}\ge L^{N,\delta,-}_{\ell\delta}, 0\le \ell\le k-1\big\}\label{u45}\\
\ux^{\delta,+}_{k\delta} (\cB)
&= \big\{ B^{i,j}_{k\delta}: B^{i,j}_{\ell\delta}\ge L^{N,\delta,+}_{\ell\delta}, 1\le \ell\le k\big\}.\nn
\end{align}


\paragraph{Partial order and domination.}
Let $\ux$ and $\uy$ be finite particle configurations and define
\begin{align}
\label{porder}
  \ux\preccurlyeq \uy \quad  {\text{if and only if}}\quad   | \ux\cap  [a,\infty) | \le   | \uy\cap  [a,\infty) | \quad \forall a\in\R.
\end{align}
In this case, we say that $\ux$ is dominated by $\uy$.
In Section \ref{s3}, we prove  the following dominations.  
\begin{proposition}
\label{domination}
For each $\delta>0$, there exists a coupling $\big(( \hux^{\delta,-}_{k\delta},\hux_{k\delta},\hux^{\delta,+}_{k\delta}):k\ge 0\big)$, whose marginals  have respectively the same law as $(\ux^{\delta,-}_{k\delta}:k\ge0)$, $(\ux_{k\delta}:k\ge0)$ and $(\ux^{\delta,+}_{k\delta}:k\ge 0)$, such that 
\begin{align}
\label{domination1}
  \hux^{\delta,-}_{k\delta}\preccurlyeq\hux_{k\delta}\preccurlyeq\hux^{\delta,+}_{k\delta},\quad k\ge 0.
\end{align}
\end{proposition}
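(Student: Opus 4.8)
The plan is to build the coupling one $\delta$-step at a time: assuming it has been defined up to time $(k-1)\delta$ with the sandwich \eqref{domination1} in force there, I would specify the joint law of $(\hux^{\delta,-}_{k\delta},\hux_{k\delta},\hux^{\delta,+}_{k\delta})$ on $((k-1)\delta,k\delta]$, conditionally on its value at $(k-1)\delta$, so that the three one-step marginals are correct and the sandwich is transported to $k\delta$; the proposition then follows by induction, the base case $k=0$ being immediate since $\hux^{\delta,\pm}_0=\hux_0=\uz_0$. On each step the $N$-BBM is the pivot: run a free branching Brownian motion $F$ from $\hux_{(k-1)\delta}$, of which $\hux$ on the step is the deterministic function obtained by applying the leftmost-removal rule \eqref{xt1}; then couple $\hux^{\delta,+}$ to $F$ from above and $\hux^{\delta,-}$ to $F$ from below, with fresh independent randomness on each side, so the two constructions are mutually consistent. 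The only elementary fact used throughout is that truncation from the left preserves $\preccurlyeq$: if $A\preccurlyeq B$ and one keeps the $m$, resp.\ $m'$, rightmost points, with $m\le m'$, the results are again comparable, because $|(m\text{ rightmost of }A)\cap[a,\infty)|=\min(|A\cap[a,\infty)|,m)$ for every $a$.

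\emph{The upper bound.} Given $\hux_{(k-1)\delta}\preccurlyeq\hux^{\delta,+}_{(k-1)\delta}$ (both of size $N$), I would run, alongside $F$, a free BBM $G$ from $\hux^{\delta,+}_{(k-1)\delta}$ by the standard monotone coupling — particles matched by rank from the right, matched pairs sharing a driving Brownian motion and branching together — so that $F_t\preccurlyeq G_t$ for all $t\in[(k-1)\delta,k\delta]$. By definition $\hux^{\delta,+}_{k\delta}$ is the $N$ rightmost points of $G_{k\delta}$, whereas $\hux_{k\delta}$ is a sub-multiset of $F_{k\delta}$ of cardinality $N$, the $N$-BBM only erasing from the left along the step. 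Hence for every $a$,
$|\hux_{k\delta}\cap[a,\infty)|\le\min(|F_{k\delta}\cap[a,\infty)|,N)\le\min(|G_{k\delta}\cap[a,\infty)|,N)=|\hux^{\delta,+}_{k\delta}\cap[a,\infty)|$,
which is the upper domination $\hux_{k\delta}\preccurlyeq\hux^{\delta,+}_{k\delta}$.

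\emph{The lower bound.} This is the delicate half, and the reason for the labelled, trajectory-wise coupling mentioned in the introduction. A plain ``read both processes off the same BBM'' coupling fails here for two reasons: the cutoff $L^{\delta,-}_{(k-1)\delta}$ is clairvoyant, being a function of the family \emph{sizes} over the whole coming step rather than of positions, and the lower barrier removes entire families at the single instant $(k-1)\delta$, whereas the $N$-BBM removes one leftmost particle at a time, continuously. My plan is: fix first an order-respecting injection $\psi$ of $\hux^{\delta,-}_{(k-1)\delta}$ into $\hux_{(k-1)\delta}$ (it exists precisely because $\hux^{\delta,-}_{(k-1)\delta}\preccurlyeq\hux_{(k-1)\delta}$), attach labels to the lower-barrier particles, and run the family of each labelled particle $c$ as the $F$-family of $\psi(c)$ translated left by $\mathrm{pos}(\psi(c))-\mathrm{pos}(c)\ge 0$; this gives the lower barrier the correct law and makes the retained set $\tilde A=\{c:\mathrm{pos}(c)\ge L^{\delta,-}_{(k-1)\delta}\}$ consistent with its dynamics, since $L^{\delta,-}_{(k-1)\delta}$ is computed from exactly those family sizes. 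One then maintains, for every $t$ on the step, an order-respecting injection $\phi_t$ of the surviving labelled lower-barrier particles into $\hux_t$, initialised at $\psi$ restricted to $\tilde A$; matched pairs stay ordered along the common translated trajectory, and each time the $N$-BBM erases its current leftmost particle and that particle lies in the range of $\phi_t$, the orphaned lower-barrier particle — which then sits weakly to the left of every surviving $\hux$-particle — is re-attached to an $\hux$-particle not currently in the range of $\phi_t$.

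The step I expect to be the main obstacle is making this re-attachment both possible and harmless: one has to show that a free target for the orphan is always available at the moment it is needed — a counting argument exploiting $|\hux^{\delta,-}_{\ell\delta}|\le N$ together with the fact that once the lower barrier reaches $N$ particles no further branching, hence no further removal, occurs on the remainder of the step — that the bookkeeping keeps the lower-barrier marginal correct, and that $\phi_{k\delta}$ is a genuine order-respecting injection; the last of these, combined with $|\hux^{\delta,-}_{k\delta}|\le N=|\hux_{k\delta}|$, yields $\hux^{\delta,-}_{k\delta}\preccurlyeq\hux_{k\delta}$, since for $a$ to the right of every point outside the range of $\phi_{k\delta}$ the injection gives the inequality and for $a$ to the left of such a point both sides carry their full mass, which is at most $N$. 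Putting the two bounds together at each $k$ and performing the constructions step-compatibly produces the single coupling of the triple asserted in \eqref{domination1}.
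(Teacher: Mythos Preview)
Your upper-bound step is essentially the paper's: the monotone coupling of free BBM by rank-matching and translation is exactly the auxiliary process $V$ of \eqref{388}, and your inequality chain is \eqref{389}--\eqref{xmx+}. That half is correct.

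The lower bound is where your plan diverges from the paper and where there is a real gap. Your re-attachment scheme faces a dilemma you flag but do not resolve. If the orphan $d$ keeps its own translated trajectory after being re-matched to a fresh $\hux$-particle, the two follow independent Brownian increments thereafter and the order $d\le\phi_t(d)$ need not persist to $k\delta$. If instead the orphan adopts the new target's increments \emph{and branching clock}, the pairwise order survives, but the family sizes of the lower-barrier process are now altered \emph{after} the cut point $L^{\delta,-}_{(k-1)\delta}$ was computed from the original sizes; the retained families may then carry a different number of descendants at $k\delta$, and the marginal is no longer that of $\ux^{\delta,-}$. Your availability argument (``once the lower barrier reaches $N$ particles no further branching\ldots'') presupposes that matched pairs share branching events throughout, which re-attachment under the first option breaks, and presupposes the original family sizes, which the second option destroys.

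The paper sidesteps this by inserting an auxiliary \emph{rank-selected} process $Y_t$, see \eqref{ytbt}: the $N$ highest-ranked BBM particles, rank ordering first by initial position then by birth order. Two easier facts replace your single hard step. First, on the \emph{same} ranked BBM, $\ux^{\delta,-}_\delta\subset Y_\delta$ immediately, since $\ux^{\delta,-}_\delta$ keeps whole rightmost families of total size $\le N$ while $Y_\delta$ keeps the $N$ highest-ranked particles; this gives \eqref{x-my}. Second, $Y$ has a particle-level description --- at each branching the lowest-ranked particle jumps to the newborn --- and it is \emph{this} process that is coupled to $\ux$ via the labelled construction: $Y^\ell$ clones the Brownian increments of $X^\ell$, and at each branching a case analysis (cases (1a)--(2c) in Section~\ref{s3}) reassigns labels so that $Y^\ell_t\le X^\ell_t$ persists while the unlabelled $Y$-positions retain the rank-selected law. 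The point is that $Y$, unlike $\ux^{\delta,-}$, is \emph{designed} to jump, so re-attachment is part of its dynamics rather than a perturbation of a free BBM whose family structure must be preserved exactly. This intermediate process is the missing idea in your sketch.
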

Under this coupling, $\hux_t=\ux_t$; nevertheless, in order to maintain the dominations, $\hux^{\delta,\pm}_t$ are functions of the ranked BBM's $\cB^\pm$, which does not coincide with $\cB$ but have the same law. 
\paragraph{Deterministic barriers.}
Take $u\in L^1(\mathbb R,\mathbb R_+)$. The Gaussian kernel $G_t$ is defined by 
\[
G_tu(a)\defi  \int_{-\infty}^\infty \frac1{\sqrt{2\pi t}} e^{-(a-r)^2/{2t}}u(r)\,dr,
\]
so that $e^tG_t\ro$ is solution of the equation $u_t=\frac 12 u_{rr}+u$ with initial condition $\ro$.
For $m>0$, the \emph{cut operator} $C_m$ is defined by
	\begin{equation}
C_m u(a)\defi u(a) \mathbf 1\llav{\int_a^{\infty} u(r) dr < m},
\label{cut1}
	\end{equation}
so that $C_mu$ has total mass $\|u\|_1\wedge m$.
For $\delta>0$ and $k\in\N$, define the upper and lower barriers  $S^{\delta,\pm}_{k\delta}\ro$ at time $\delta k$ as follows:
\begin{align}
S^{\delta,\pm}_{0}\ro\defi \ro ;\qquad 
    S^{\delta,+}_{k\delta}\ro\defi \pare{C_{1}e^\delta G_\delta }^k\ro;\qquad S^{\delta,-}_{k\delta}\ro\defi \pare{e^\delta G_\delta C_{e^{-\delta}}}^k\ro.  \label{dpm1}
  \end{align}
To obtain the upper barrier $S^{\delta,+}_{\delta}\ro$, first diffuse\&grow for time $\delta$, and then cut mass from the left to keep mass 1.	 
To get the lower barrier $S^{\delta,-}_\delta\ro$, first cut mass from the left to keep mass $e^{-\delta}$, and then diffuse\&grow for time $\delta$.
Iterate to get the barriers at times $k\delta$. Since $\|e^\delta G_\delta u\|=e^\delta\|u\|$, we have 
$\big\|S^{\delta,\pm}_{k\delta}\ro\big\|_1= \norm{\ro}_1=1$ for all $k$. 

\paragraph{Hydrodynamics of $\delta$-barriers.}
In Section \ref{s4}, we  prove that, for fixed $\delta$, the empirical measures converge as $N\to\infty$ to the macroscopic barriers:
\begin{theorem}
	\label{thm4}
Let $\pi^{N,\delta,\pm}_{k\delta}$ be the empirical measures associated to the stochastic barriers $\ux^{\delta,\pm}_{k\delta}$ with initial configuration $X_0$. 
Then,
for any $a\in\R$, $\delta>0$ and $k\in\N$,
\begin{equation*}
\lim_{N\to \infty} \int_a^\infty \pi_{k\delta}^{N,\delta,\pm}(dr) = 
 \int_a^\infty  S^{\delta,\pm}_{k\delta}\ro(r)dr = 0,\quad \hbox{a.s. and in }L^1.
   \end{equation*}
The same is true if we substitute  $\ux^{\delta,\pm}_{k\delta}$ by the coupling marginals $\hux^{\delta,\pm}_{k\delta}$ of Proposition \ref{domination}.
\end{theorem}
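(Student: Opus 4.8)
The plan is to reduce the statement to a law of large numbers for a single BBM family, iterated over the $k$ diffuse-cut steps. First I would observe that, because the barriers are explicit functions of the ranked BBM via \eqref{u45}, the empirical measure $\pi^{N,\delta,+}_{k\delta}$ is obtained from $\uz_0$ by $k$ alternations of ``diffuse\&grow for time $\delta$'' and ``cut from the left to mass $1$''. The macroscopic barrier $S^{\delta,+}_{k\delta}\ro$ is the same alternation applied to the deterministic measure $\ro\,dr$, with $e^\delta G_\delta$ playing the role of the stochastic diffuse\&grow and $C_1$ the role of the cut. So the scheme is: (i) prove the $k=1$ case, i.e. that diffusing the $N$ initial iid particles for time $\delta$, letting each branch, and keeping the $N$ rightmost of the resulting $\sum_i N^i_\delta$ particles, gives an empirical measure converging to $C_1 e^\delta G_\delta\ro$; (ii) propagate by induction on $k$, using that the configuration at step $k-1$ is itself close to $S^{\delta,\pm}_{(k-1)\delta}\ro$ and that the map ``diffuse\&grow then cut'' is continuous in the appropriate sense.

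For step (i), the key point is a law of large numbers for the sub-probability measure
\[
\nu^N := \frac1N\sum_{i=1}^N\sum_{j=1}^{N^i_\delta}\delta_{B^{i,j}_\delta}.
\]
Conditionally on the initial positions, the $N$ families are independent, $\E[N^i_\delta]=e^\delta$, and each $B^{i,j}_\delta$ has, given $B^{i,1}_0=x$, the Gaussian law $G_\delta(x,\cdot)$ (the branching does not change the marginal law of each member's position, only their number). Hence $\E[\nu^N\mid \ux_0] = \frac1N\sum_i e^\delta G_\delta(\ux^i_0,\cdot)$, which by the LLN for the iid $\ux^i_0$ converges to $e^\delta G_\delta\ro$; the fluctuations are controlled because $N^i_\delta$ has all moments (so a second-moment / Borel--Cantelli argument gives a.s. convergence, and uniform integrability gives $L^1$). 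Then the cut operator: since $e^\delta G_\delta\ro$ has a continuous, strictly positive density on a half-line (it is a strictly positive analytic function), its distribution function is strictly increasing where relevant, so the ``keep the $N$ rightmost'' operation, which is a cut at the empirical $\lceil N\rceil$-th quantile from the right, converges to the cut at the corresponding quantile of $e^\delta G_\delta\ro$, i.e. to $C_1 e^\delta G_\delta\ro$. The only subtlety is that $C_m$ is discontinuous at measures whose tail mass has a flat spot at level $m$; this does not occur here because the limiting densities at every stage are strictly positive Gaussian-type functions, so I would record a lemma: $C_m$ is continuous (in the topology of convergence of $\int_a^\infty$ for every $a$) at any measure with a density that is strictly positive on $\{u>0\}=(\ell,\infty)$ and whose total mass exceeds $m$.

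For step (ii), the induction: assuming $\pi^{N,\delta,\pm}_{(k-1)\delta}\to S^{\delta,\pm}_{(k-1)\delta}\ro$ a.s. and in $L^1$, I apply one more diffuse\&grow\&cut step. The diffuse\&grow step is again a conditional-LLN argument, now with random (but converging) initial empirical measure; one conditions on $\cF_{(k-1)\delta}$, uses that the increments and branchings after $(k-1)\delta$ are independent of the past, and uses that $e^\delta G_\delta$ is a bounded continuous operator on the relevant space so that $e^\delta G_\delta \pi^{N,\delta,\pm}_{(k-1)\delta}\to e^\delta G_\delta S^{\delta,\pm}_{(k-1)\delta}\ro$. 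Then the cut lemma closes the step, since $e^\delta G_\delta S^{\delta,\pm}_{(k-1)\delta}\ro$ is again strictly positive on its support. For the lower barrier the bookkeeping differs slightly: the cut is performed \emph{before} diffusing, to mass $e^{-\delta}$, and one keeps entire families, so the number of surviving particles is $N-O(1)$ with $O(1)/N\to0$ (as stated in the text, the $O(1)$ is asymptotically the size-biased age of a renewal process with steps distributed as $N_\delta$, which has all moments); this $O(1)/N$ correction is negligible in the limit and does not affect the limiting measure. Finally, the assertion for the coupling marginals $\hux^{\delta,\pm}_{k\delta}$ is immediate since they have the same law as $\ux^{\delta,\pm}_{k\delta}$ by Proposition \ref{domination}, and a.s./$L^1$ convergence of the integrals depends only on the law.

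The main obstacle I anticipate is making the cut operator's continuity airtight: $C_m$ is genuinely discontinuous in general, and the argument rests on the fact that at every stage the limiting object is a strictly positive (in fact analytic) density on a half-line so the tail-mass function is strictly decreasing through the level $m$, forcing the empirical quantile to converge. Establishing this strict positivity along the iteration — it follows because $G_\delta$ maps any nonzero nonnegative density to a density strictly positive on all of $\R$, and each subsequent cut only chops a left piece leaving $(\ell,\infty)$ — and then turning ``empirical tail function converges to a strictly monotone limit'' into ``empirical quantile converges'', is the technical heart; everything else is a conditional second-moment LLN plus uniform integrability from the finiteness of all moments of $N_\delta$.
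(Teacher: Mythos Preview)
Your plan is sound and would work; the core technical point you isolate --- that the cut operator is continuous at the limiting densities because $G_\delta$ produces strictly positive functions, so the empirical quantile converges to the deterministic one --- is exactly the content the paper needs too. The paper, however, organises the argument differently. Rather than iterating ``diffuse--grow--cut'' as a map on empirical measures and invoking a conditional LLN at each step with a random (converging) initial configuration, the paper uses the global trajectory representation \eqref{u45}: the barrier $\ux^{\delta,\pm}_{k\delta}$ is the set of BBM particles whose trajectory clears \emph{all} the random cutting levels $L^{N,\delta,\pm}_{\ell\delta}$, $\ell\le k$. The induction is then carried out not on the empirical measures but on the indicator products $A^{N,\delta,\pm}_{k\delta}=\prod_\ell \one\{B^{i,j}_{\ell\delta}\ge L^{N,\delta,\pm}_{\ell\delta}\}$, showing in Proposition~\ref{llnlt} that replacing the random levels by the deterministic $L^{\delta,\pm}_{\ell\delta}$ costs $o(1)$; once the levels are deterministic, a single application of the family-wise LLN (Proposition~\ref{propo12}, which is just Kolmogorov's SLLN for the iid family sums) finishes the job via the representation \eqref{ivs1}. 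This buys the paper the advantage of never leaving the iid setting --- all sums are over the original $N$ independent families started at time $0$, so no conditional-LLN bookkeeping with correlated post-selection configurations is needed. Your route, by contrast, makes the semigroup structure of $S^{\delta,\pm}$ more transparent and packages the quantile convergence as an abstract continuity lemma for $C_m$, which is arguably more reusable; the price is having to argue carefully that the conditional second-moment bound is uniform once you condition on $\cF_{(k-1)\delta}$. One small correction: ``second-moment / Borel--Cantelli'' is not quite the mechanism, since variance $O(1/N)$ is not summable; the a.s.\ convergence comes from the fact that the family contributions are genuinely iid with finite second moment, so Kolmogorov's SLLN applies directly.
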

\paragraph{Convergence of macroscopic barriers.} For $u,v\in L^1\pare{\R,\R^+}$, we write
\[
u \preccurlyeq v\quad  {\text{iff}}\quad \int_a^\infty u(r)dr\le \int_a^\infty v(r)dr\quad \forall a\in\R.
\]
In Section \ref{s5}, we fix $t$ and take $\delta=t/2^{n}$ to prove that, for the order $\preccurlyeq$,  the sequence $S^{t/2^n,-}_{t}\ro$ is increasing, the sequence $S^{t/2^n,+}_{t}\ro$ is decreasing, and  $\|S^{t/2^n,+}_{t}\ro-S^{t/2^n,-}_{t}\ro\|_1\to 0$ as $n\to\infty$. As a consequence,  we get the following theorem.
\begin{theorem}
  \label{thm5}
There exists a continuous function called $\psi(r,t)$ such that, for any $t>0$,
\begin{align*}
  \lim_{n\to\infty} \|S^{t/2^n,\pm}_{t}\ro -\psi(\cdot,t)\|_1 = 0.
\end{align*}
\end{theorem}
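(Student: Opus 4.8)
Fix $t>0$ and write $\delta_n=t/2^n$. I would reduce the statement to three facts, and then deduce it by a monotone‑squeeze argument. The three facts are: (i) the operators $e^\delta G_\delta$ and $C_m$ are monotone for the order $\preccurlyeq$, while $C_mv\preccurlyeq v$ and $C_mv\le v$ pointwise, and $e^\delta G_\delta=(e^{\delta/2}G_{\delta/2})^2$; (ii) consequently $S^{\delta_n,-}_t\ro\preccurlyeq S^{\delta_{n+1},-}_t\ro$, $S^{\delta_{n+1},+}_t\ro\preccurlyeq S^{\delta_n,+}_t\ro$ and $S^{\delta_n,-}_t\ro\preccurlyeq S^{\delta_n,+}_t\ro$, and moreover $S^{\delta_n,\pm}_t\ro\le e^tG_t\ro$ pointwise; (iii) $\|S^{\delta_n,+}_t\ro-S^{\delta_n,-}_t\ro\|_1\to0$. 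Granting these, set $F^{\pm}_n(a):=\int_a^\infty S^{\delta_n,\pm}_t\ro(r)\,dr$; by (ii) the sequences $F^{-}_n$ increase and $F^{+}_n$ decrease, with $F^{-}_n\le F^{+}_n$, and by (iii) $|F^{+}_n(a)-F^{-}_n(a)|\le\|S^{\delta_n,+}_t\ro-S^{\delta_n,-}_t\ro\|_1\to0$, so both converge to a common limit $\Psi$. The domination in (ii) gives $|\Psi(a)-\Psi(b)|\le\int_a^b e^tG_t\ro$, so $\Psi$ is absolutely continuous with $\psi(\cdot,t):=-\Psi'\le e^tG_t\ro\in L^1$; since $\Psi(a)\ge F^{-}_n(a)\ge1-\int_{-\infty}^a e^tG_t\ro\to1$ as $a\to-\infty$, we get $\|\psi(\cdot,t)\|_1=1$, and $S^{\delta_n,-}_t\ro\preccurlyeq\psi(\cdot,t)\preccurlyeq S^{\delta_n,+}_t\ro$.

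\textbf{Monotonicity and domination (the routine part).} The monotonicity of $C_m$ follows from $\int_a^\infty C_mv=\min(\int_a^\infty v,\,m)$ (valid because a density has no atoms in its tail) together with monotonicity of $s\mapsto\min(s,m)$; the monotonicity of $e^\delta G_\delta$ follows by integration by parts, using that $x\mapsto\int_a^\infty g_\delta(r-x)\,dr=P_x(B_\delta>a)$ is nondecreasing. From these, plus $C_mv\le v$ pointwise and $\int_a^\infty C_mv\le m$ for all $a$, one gets the one‑step comparisons: for every density $v$ with $\|v\|_1=1$,
\[
(C_1e^{\delta/2}G_{\delta/2})^2v\preccurlyeq C_1e^\delta G_\delta v,\qquad e^\delta G_\delta C_{e^{-\delta}}v\preccurlyeq(e^{\delta/2}G_{\delta/2}C_{e^{-\delta/2}})^2v,\qquad e^\delta G_\delta C_{e^{-\delta}}v\preccurlyeq C_1e^\delta G_\delta v.
\]
The first is immediate (inserting the extra cut lowers the configuration, and $C_1,e^{\delta/2}G_{\delta/2}$ are monotone). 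For the second, using $C_{e^{-\delta}}=C_{e^{-\delta}}C_{e^{-\delta/2}}$ and the semigroup relation one reduces to $e^{\delta/2}G_{\delta/2}C_{e^{-\delta}}w\preccurlyeq C_{e^{-\delta/2}}e^{\delta/2}G_{\delta/2}w$ for $w$ of mass $e^{-\delta/2}$: the left side has total mass $e^{-\delta/2}$, so its tails never exceed $e^{-\delta/2}$, and (by $C_{e^{-\delta}}w\le w$) are bounded by those of $e^{\delta/2}G_{\delta/2}w$; the third is the same computation. A one‑variable induction on the number of steps, propagated by the monotonicity of $C_1e^\delta G_\delta$ and $e^\delta G_\delta C_{e^{-\delta}}$, promotes these to (ii), and the pointwise bound $S^{\delta_n,\pm}_t\ro\le e^tG_t\ro$ follows by iterating $C_mv\le v$ and positivity of $G_\delta$.

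\textbf{Closing the gap (the crux).} Here I would work with the Brownian representations of the deterministic barriers, valid because the cut levels $\ell^{\delta,\pm}_{j\delta}$ are deterministic thresholds: with $k\delta=t$ and $P_\ro$ the law of Brownian motion started from density $\ro$,
\[
\int_a^\infty S^{\delta,+}_{k\delta}\ro=e^{t}\,P_\ro\!\big(B_t>a,\ B_{j\delta}>\ell^{\delta,+}_{j\delta}\ \forall\,1\le j\le k\big),\qquad
\int_a^\infty S^{\delta,-}_{k\delta}\ro=e^{t}\,P_\ro\!\big(B_t>a,\ B_{j\delta}>\ell^{\delta,-}_{j\delta}\ \forall\,0\le j\le k-1\big).
\]
By the monotonicity of the previous step these survival probabilities are monotone along the dyadic refinement, so both converge; the key point is that the limits coincide. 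The mechanism I would use: the uniform bound $\|S^{\delta_n,\pm}_{j\delta_n}\ro\|_\infty\le e^{t}\|\ro\|_\infty$ controls how concentrated the cut mass is and hence gives a priori bounds on the staircases, which, being $\preccurlyeq$‑pinched, converge along a subsequence; the total‑mass identity $\|S^{\delta_n,\pm}_{k\delta_n}\ro\|_1=1$ translates into $P_\ro(\text{survive the }\ell^{\delta_n,\pm}\text{-staircase up to time }s)=e^{-s}$, so each subsequential limit curve $L^{\pm}$ satisfies $P_\ro(\tau^{L^{\pm}}>s)=e^{-s}$ for all $s\le t$; since the survival probability above a curve is continuous under this convergence (Brownian motion spends no time on a fixed level set), the $\preccurlyeq$‑domination forces $L^{+}=L^{-}$ (equivalently, one may skip the boundary and argue that the weak limits $\mu^{\pm}$ both satisfy $\partial_t\mu=\tfrac12\mu_{rr}+\mu$ in the bulk with $\|\mu_t\|=1$, and conclude $\mu^{+}=\mu^{-}$ by a parabolic comparison principle). \emph{This is where I expect the main obstacle to lie}: a naive step‑by‑step comparison is useless, since each of the $t/\delta$ steps contributes a discrepancy of order $\sqrt\delta$ (Gaussian spreading) amplified by $e^\delta$, summing to an $O(1)$ bound no matter how one telescopes — in space or in scale; the closing of the gap must therefore go through compactness and a uniqueness statement for the limiting object, and the delicate technical work is the a priori control of the discrete boundaries together with the continuity of the Brownian‑survival functional.

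\textbf{$L^1$ convergence and continuity of $\psi$.} Once fact (iii) is in hand, $S^{\delta_n,\pm}_t\ro\to\psi(\cdot,t)$ in $L^1$ follows by combining the pointwise domination $S^{\delta_n,\pm}_t\ro\le e^tG_t\ro\in L^1$ (tightness) with the decomposition $S^{\delta_n,\pm}_t\ro=e^tG_t\ro-R^{\pm}_n$, where $R^{\pm}_n\ge0$ is a sum of Gaussian‑smoothed left chunks of total mass $e^t-1$: convergence of the tails forces $R^{\pm}_n$ to converge in $L^1$ (for $S^{\delta_n,+}$ one separates off the single unsmoothed chunk carrying the jump at $\ell^{\delta_n,+}_t$). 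Finally $\psi(\cdot,t)$ is continuous: away from the limiting boundary it is a limit of Gaussian convolutions, while the only possible discontinuity is the jump of $S^{\delta_n,+}_t\ro$ at its cut point, whose size equals the value there of a function whose leftmost mass being cut is $e^{\delta_n}-1\to0$; a mass‑balance estimate (a $\delta$-mass chunk under a bounded‑slope profile has height $O(\sqrt\delta)$ or smaller) shows that value tends to $0$, so $\psi(\cdot,t)$ has no jump. This yields Theorem~\ref{thm5}.
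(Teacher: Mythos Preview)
Your monotonicity work (facts (i)--(ii)) matches the paper's Proposition~\ref{thm031}. The divergence is at the crux (iii). The paper does \emph{not} go through compactness and a uniqueness statement for the limit; it proves directly the quantitative bound $\|S^{\delta,+}_t\ro - S^{\delta,-}_t\ro\|_1 \le c\delta$. Setting $u_k := e^\delta G_\delta(C_1 e^\delta G_\delta)^{k-1}\ro$ and $v_k := S^{\delta,-}_{k\delta}\ro$, it telescopes $\|u_k-v_k\|_1 \le e^\delta\|C_1 u_{k-1} - C_{e^{-\delta}} v_{k-1}\|_1$ using only the $L^1$-contractions $\|C_m f - C_m g\|_1\le\|f-g\|_1$ and $\|G_\delta f - G_\delta g\|_1\le\|f-g\|_1$, and claims the one-step error is $O(\delta^2)$; summed over $t/\delta$ steps with growth factor $e^\delta$ this yields $O(\delta)$. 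So the paper's strategy is precisely the direct $L^1$ iteration you dismiss as ``useless.'' For the continuity of $\psi$, too, the paper argues differently from you: it proves equicontinuity of $(r,t)\mapsto S^{2^{-n},-}_t\ro(r)$ uniformly in $n$, via the smoothing bound $|\partial_r G_s u|\le c\|u\|_\infty/\sqrt s$ together with an $L^\infty$ estimate on the defect $S^{\delta,-}_t u - e^{t-s}G_{t-s}S^{\delta,-}_s u$, and then applies Ascoli--Arzel\`a to obtain a continuous limit in sup-norm, rather than tracking the size of the jump at the final cut.

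Your alternative for (iii) has a genuine gap. To carry it out you would need to (a) extract \emph{continuous} limit curves $L^\pm$ from the discrete staircases $(\ell^{\delta_n,\pm}_{j\delta_n})_j$, (b) show the survival functional $L\mapsto P_\ro(\tau^L>s)$ is continuous at those limits, and (c) establish an ordering between $L^+$ and $L^-$; you provide none of these --- the ``a priori bounds on the staircases'' are only asserted, and (b) is delicate exactly because it presupposes the regularity in (a). Your fallback via a ``parabolic comparison principle'' is worse: that is essentially uniqueness for the free boundary problem, which the paper does not assume and in fact derives as a \emph{consequence} of the present theorem (through Theorem~\ref{u-existence}), so invoking it here would be circular.
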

\paragraph{Sketch of proof of Theorems \ref{hydrodynamics} and  \ref{u-existence}.} The coupling of Proposition \ref{domination} satisfies $\hux^{\delta,-}_t\cle \hux_t\cle \hux^{\delta,+}_t$. 
By Theorem \ref{thm4}, the empirical measures associated to the stochastic barriers $\hux^{\delta,\pm}_t$ converge to the macroscopic barriers $S^{\delta,\pm}_t\ro$. The macroscopic barriers converge to a function $\psi(\cdot,t)$, as $\delta\to0$,  by Theorem \ref{thm5}. Hence the empirical measure of $\hux_t$ must converge to $\psi(\cdot,t)$ as $N\to\infty$. This is enough to get Theorem \ref{hydrodynamics}.

In Section  \ref{s6}, we show that any solution of the free boundary problem is in between the barriers $S_{k\delta}^{\delta,\pm}\ro$; this is enough to get Theorem \ref{u-existence}.

%
%
%
%
%
%
%
%


\section{Domination. Proof of Proposition \ref{domination}}
\label{s3}
\paragraph{Pre-selection inequalities.}
Recall that $(i,j)$ is the rank of particle $B^{i,j}_t$, and define the \emph{rank order} 
\begin{align}
  (i,j)\prec (i',j') \hbox{ if and only if }B^{i,1}_0<B^{i',1}_0\hbox{ or }i=i'\hbox{ and }j<j'.
\end{align}
The \emph{rank-selected} $N$-BBM consists on the positions of the $N$ particles with highest ranks at time $t$, denoted by
\begin{align}
\label{ytbt}
  Y_t(\cB):= \big\{B^{i,j}_t: \big|\{B^{i',j'}_t:(i,j)\prec(i',j')\}\big| < N\big\}.
\end{align}
Despite this is not a Markov process, one can produce a Markovian one by keeping track of the particle ranks as follows.
Between branching times particles move according to independent Brownian motions.
At each branching time, the particle with smallest rank jumps to the newborn particle, adopting its family and updating conveniently its rank. 

Recall the definition \eqref{zbt} of $\uz_t$ and observe that $\uy_t$ is a subset of $\uz_t$ with exactly $N$ particles of the rightmost families at time 0, while $\ux^{\delta,-}_\delta$ consists of the descendance at time $\delta$ of the maximal possible number of rightmost particles at time 0 whose total descendance at time $\delta$ does not exceed $N$. Hence $\ux^{\delta,-}_\delta \subset \uy_\delta$, which in turn implies
\begin{align}
\label{x-my}
\ux^{\delta,-}_\delta  \cle \uy_\delta.
\end{align}

\emph{Coupling}. We define a coupling between two vectors
\begin{align}
  (\ux_t^1,\dots,X_t^N)\hbox{ and }\big((\uy_t^1,\sigma_t^1),\dots,(\uy_t^N,\sigma_t^N)\big)
\end{align}
with $\ux^\ell_t,\uy^\ell_t\in \R$ and ranks $\sigma^\ell_t\in  \{1,\dots,N\}\times\{1,2,\dots\}$.

The particles stay spatially ordered $\uy^\ell_t\le \ux^\ell_t$ for all $\ell $ and $t$, and the sets of positions of $\ux^\ell_t$ and $\uy^\ell_t$ will have the same law than the processes $\ux_t$ and $\uy_t$, respectively.

Start with initial positions $(\ux^1_0,\dots,\ux^N_0)$ and $(\uy^1_0,\dots,\uy^N_0)$ satisfying $\uy^\ell_0\le \ux^\ell_0$ for all $\ell$.
If we have less than $N$ particles in the $\uy$-vector, we pretend that there are extra $\uy^\ell$-particles located at $-\infty$ and define the coupling for two vectors with $N$ coordinates each, anyway.
Let $\sigma^\ell_0$ be the ordered rank of the $\ell$-th $\uy$-particle defined by $\sigma_0^\ell\in \{(1,1),\dots,(N,1)\}$ and $\sigma^\ell_0\prec \sigma^{\ell'}_0$ if and only if $\uy^\ell_0<\uy^{\ell'}_0$ (we are assuming all the points are different; if not, use any criterion to break the ties).

Let $\ux_t$ be the $N$-BBM starting from the set $\{\ux^1_0,\dots,\ux^N_0\}$.
Between branching times, $\ux^\ell_t$ follows the increment of some Brownian particle.
At each branching time the left-most $\ux^\ell$-particle jumps to the branching place and start following the increments of the newborn particle.

Between branching events, $\ux_t^\ell-\uy_t^\ell$ and $\usi_t^\ell$ are constant, that is, the increments of $\uy_t^\ell$ clone those of $\ux^\ell_t$ ---independent Brownian motions--- and the ranks do not change.

Let $s$ be a branching time for the $\ux^n$-particle and suppose we have established $\sigma^\ell_{s-}, \uy^\ell_{s-},  \ux^\ell_{s-}$ for all $\ell$. Let $m$ be the label of the left-most $\ux^\ell$-particle before the branching, that is,  $\ux^m_{s-} = \min_{\ell}  \ux^\ell_{s-}$.
Let $h$ be the label of the minimal ranked $\uy^\ell$-particle, that is $\sigma^h_{s-} \prec\sigma^{\ell}_{s-}$ for all $\ell\neq h$. There are two cases. 

(1) $n\notin\{m,h\}$. 
At time $s$, set $\ux^m_s= \ux^n_{s-}$ and let the remaining $\ux^\ell$-particles keep their positions: $\ux^\ell_s = \ux^\ell_{s-}$ for all $\ell\neq m$. That is, the $\ux^m$-particle jumps to the newborn particle at~$\ux^n_{s-}$ and starts to follow its increments.

The positions and ranks of the $\uy^\ell$-particles are modified as follows.
Assume $\sigma^n_{s-}= (i,j)$.
If $\pare{i,M_t^i}=\max_{j'}\sigma_{t}(i,j')$ is the maximal rank  present at time $t$ in the $i$-family, set $\sigma^m_s= (i, 1+M^i_{s-})$,  $\uy^m_s = \uy^n_{s-}$.
We have two sub-cases.

(1a) If $h=m$, let the remaining particles keep their positions and ranks, that is, $(\uy^\ell_s,\sigma^\ell_s)=(\uy^\ell_{s-},\sigma^\ell_{s-})$ for $\ell \neq m$.

(1b) If $h\neq m$, set $\uy^h_s = \uy^m_{s-}$, $\sigma^h_s=\sigma^m_{s-}$ and let the remaining particles keep their positions and ranks. See Figure 1. 

(2) $n\in\{m,h\}$. Update the rank $\sigma^h_{s}= (i, 1+M^i_{s-})$, keep the other ranks and consider the following subcases. 

(2a) If $n=h \ne m$ set $\ux^m_s = \ux^h_{s-}$ and $\uy^h_s= \uy^m_{s-}$ and let the remaining particles keep their positions. See Figure 2.

(2b) If $n= m\ne h$ let all particles keep their positions.

(2c) If $n=h=m$ let all particles keep their positions.



\begin{figure}[h]
\centering
\hspace{-70pt}
\begin{minipage}[th]{.2\textwidth}
  \centering
  \begin{tikzpicture}[thick,scale=0.9,every node/.style={transform shape}]
\draw (-7.013,2.7) node[anchor=north west,text width=10pt] {before  jumps};
\draw (-7.013,0.18) node[anchor=north west,text width=10pt] {after jumps};
\draw (-7.013,-5) node[anchor=north west,text width=10pt] {};
\end{tikzpicture}
\end{minipage}%
\hspace{-15pt}
\begin{minipage}[t]{.4\textwidth}
  \centering
\begin{tikzpicture}[thick,scale=0.9,every node/.style={transform shape}]
\draw[->,line width=.8pt,domain=-2.9:0.4,smooth,variable=\x]
plot ({\x},{ -0.1*(\x+3)*(\x-0.5)+1});
\draw[<-,line width=.8pt,domain=-2.9:-1.1,smooth,variable=\x]
plot ({\x},{ 0.25*(\x+3)*(\x+1)+.8});
\draw[->,line width=.8pt,domain=-1.9:1.9,smooth,variable=\x]
plot ({\x},{ -0.17*(\x+2)*(\x-2)+1.7});
\draw [line width=1.2pt] (-4,1.6) -- (3.3,1.6);
\draw [line width=1.2pt] (-4,.9) -- (3.3,.9);
\draw [line width=1.2pt] (-4,-1.6) -- (3.3,-1.6);
\draw [line width=1.2pt] (-4,-0.9) -- (3.3,-.9);
\draw (-2.3,2.3) node[anchor=north west] {$m$};
\draw (1.92,2.2) node[anchor=north west] {$n$};
\draw (-0.54,2.22) node[anchor=north west] {$h$};
\draw (-3.6,.87) node[anchor=north west] {$m$};
\draw (-1.14,.8) node[anchor=north west] {$h$};
\draw (0.475,.87) node[anchor=north west] {$n$};
\draw (1.55,-.99) node[anchor=north west] {$n  \ m$};
\draw (-0.8,-.27) node[anchor=north west] {$h$};
\draw  (.05,-1.75) node[anchor=north west] {$n  \ m$};
\draw (-3.26,-1.67) node[anchor=north west] {$h$};
\draw[fill]   (-2,1.6) circle (2.2pt);
\draw[fill]   (-0.3,1.6) circle (2.2pt);
\draw[fill]   (2,1.6) circle (2.2pt);
\draw[fill]   (-3,0.9) circle (2.2pt);
\draw[fill]   (-1,0.9) circle (2.2pt);
\draw[fill]   (0.5,0.9) circle (2.2pt);
\draw[fill]   (2,-.8) circle (2.2pt);
\draw[fill]   (2,-1) circle (2.2pt);
\draw[fill]   (-0.3,-.9) circle (2.2pt);
\draw[fill]   (.5,-1.5) circle (2.2pt);
\draw[fill]   (-3,-1.6) circle (2.2pt);
\draw[fill]   (.5,-1.7) circle (2.2pt);
\draw[line width=0.8pt,dotted] (-3,.9)--(-3,-1.6);
\draw[line width=0.8pt,dotted] (-.3,1.6)--(-.3,-.8);
\draw[line width=0.8pt,dotted] (.5,.9)--(.5,-1.6);
\draw[line width=0.8pt,dotted] (2,1.6)--(2,-.8);
\end{tikzpicture}
  \captionof{figure}{\small Relative positions of particles at branching time $s$ for the case (1b). Each $X$-particle is to the right of the $Y$-particle with the same label before and after the branching. This order would be broken if the $h$th $Y$-particle jumped to the $n$th $Y$-particle, in this example.}
  \label{fig:test1}
\end{minipage}%
\ \ \ \ \ 
\begin{minipage}[t]{.4\textwidth}
  \centering
\begin{tikzpicture}[thick,scale=0.9,every node/.style={transform shape}]
\draw[<-,line width=.8pt,domain=-2.9:0.4,smooth,variable=\x]
plot ({\x},{ -0.125*(\x+3)*(\x-0.5)+1});
\draw[->,line width=.8pt,domain=-1.9:1.9,smooth,variable=\x]
plot ({\x},{ -0.17*(\x+2)*(\x-2)+1.7});
\draw [line width=1.2pt] (-4,1.6) -- (3.3,1.6);
\draw [line width=1.2pt] (-4,.9) -- (3.3,.9);
\draw [line width=1.2pt] (-4,-1.6) -- (3.3,-1.6);
\draw [line width=1.2pt] (-4,-0.9) -- (3.3,-.9);
\draw (-2.5,2.2) node[anchor=north west] {$m$};
\draw (1.937,2.27) node[anchor=north west] {$n=h$};
\draw (-3.02,0.83) node[anchor=north west] {$n=m$};
\draw (0.475,.87) node[anchor=north west] {$h$};
\draw (1.53,-1) node[anchor=north west] {$h \, \ m$};
\draw (-3.47,-1.67) node[anchor=north west] {$h \, \ m$};
\draw (3.4,2) node[anchor=north west] {\large $X_{s-}$};
\draw (3.4,1.28) node[anchor=north west] {\large $Y_{s-}$};
\draw (3.4,-.51) node[anchor=north west] {\large $X_s$};
\draw (3.4,-1.23) node[anchor=north west] {\large $Y_s$};
\draw[fill]   (-2,1.6) circle (2.2pt);
\draw[fill]   (2,1.6) circle (2.2pt);
\draw[fill]   (-3,0.9) circle (2.2pt);
\draw[fill]   (0.5,0.9) circle (2.2pt);
\draw[fill]   (2,-.8) circle (2.2pt);
\draw[fill]   (2,-1) circle (2.2pt);
\draw[fill]   (-3,-1.5) circle (2.2pt);
\draw[fill]   (-3,-1.7) circle (2.2pt);
\draw[line width=0.8pt,dotted] (-3,.9)--(-3,-1.6);
\draw[line width=0.8pt,dotted] (2,1.6)--(2,-.8);
\end{tikzpicture}
\vspace{-14.3pt}
  \captionof{figure}{\small Case (2a). When $n=m$ only the $h$-th $Y$-particle jumps to $Y^n_{s-}$ while when $n=h$ only the $m$-th $X$-particle jumps to $X^{h}_{s-}$. We perform these two cases simultaneously.}
  \label{fig:test2}
\end{minipage}
\end{figure}
\emph{Remarks. } In case (1a), the $m$-th particle of each process goes to the respective new-born particle at positions $\ux^n_{s-}$ and $\uy^n_{s-}$ respectively. In case (1b) the $m$-particle goes to the new-born particle at $\uy^n_{s-}$ and the $m$-rank takes a rank in the family of the newborn particle while the $h$-th rank and particle take the rank and position of the $\uy^m$-particle. When $n\neq m$ in case (2a) we couple the branching of the $\ux^h$-particle with the branching of the $\uy^m$-particle while in case (2b) we couple the branching of the $\ux^m$-particle with the branching of the $\uy^h$-particle. None branching produce a new particle but we keep track of this time by increasing the rank of the $Y^h$-particle. In case (2c) both $\ux^m$ and $\uy^h$ particles branch but neither produce a new particle; we also keep track of this time by increasing the rank of the $Y^h$-particle. The instructions of the two last cases are the same but in (2b) we couple particles with different labels while in (2c) we couple particles with the same label;  so we simply are stressing this difference. We introduce the counters $M^i_t$ to rank newborn $\uy^\ell$-particles and track branching times of $\uy^\ell$-particles that have not produced new particles. Observe that $M^i_t$ also tracks the size of the $i$-family until its lower ranked members start jumping to families with higher ranks.

 By construction, we have that  $\ux_t= \{\ux^1_t,\dots,\ux^N_t\}$. Denote the set of positions occupied by the $\uy^\ell$-particles by 
\begin{align}\label{huytl}
  \huy_t:= \{\uy^1_t,\dots,\uy^N_t\},\quad t\ge0.
\end{align}
Let us stress that $\huy_t$ is also a function of $\cB$ but does not coincide with $\uy_t(\cB)$ given in \eqref{ytbt}.

\begin{lemma}
The  coupling marginals satisfy (a) $\ux_t$ is the $N$-\nobreak{\rm BBM} and  (b) $\huy_t$ has the distribution of $\uy_t$, the rank-selected process described at the beginning of this section. Moreover, 
\begin{align}
\label{ymx}
  \huy_t\cle \ux_t.
\end{align}
\end{lemma}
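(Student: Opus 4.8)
The plan is to read the three assertions off the explicit construction, the real content lying in the clock‑ and rank‑bookkeeping of (b). Part (a) is essentially by definition: the update rule for the vector $(\ux^1_t,\dots,\ux^N_t)$ never consults the $\uy$‑coordinates; off branching times each $\ux^\ell$ is incremented by an independent Brownian motion, and at a branching time $s$ (a ring of the $\ux^n$‑clock) the leftmost coordinate $\ux^m$ is moved to $\ux^n_{s-}$ and thereafter driven by the newborn's Brownian motion, all other coordinates unchanged. Removing the leftmost particle is thus realized as relabelling it into the newborn, so $\ux_t=\{\ux^1_t,\dots,\ux^N_t\}$ is a labelled $N$‑BBM (indeed it coincides with $\ux_t(\cB)$).

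For part (b) I would show that the augmented process $\zeta_t:=\big((\uy^\ell_t,\sigma^\ell_t)\big)_{\ell=1}^N$ is a realization of the Markovian rank‑selected process described at the start of Section \ref{s3}, and then take its set‑of‑positions marginal $\huy_t$. Off branching times $\uy^\ell_t$ copies the increment of $\ux^\ell_t$ and the ranks are frozen, which is exactly the free‑motion part of that dynamics. At a branching time $s$ I would go through the five cases (1a), (1b), (2a), (2b), (2c) and check: (i) the ring of the $\ux^n$‑clock is matched with a branching of the $\uy$‑particle in slot $\pi(n)$, where $\pi$ is the transposition of the leftmost‑$\ux$ slot $m$ and the minimal‑rank‑$\uy$ slot $h$ (the identity when $m=h$) — since $\pi$ is a bijection of $\{1,\dots,N\}$ determined by the current state, the $\ux$‑clocks are i.i.d.\ rate one, and between branchings $\pi$ does not depend on the $\uy$‑positions, the $\uy$‑branchings occur along $N$ i.i.d.\ rate‑one clocks, independent of the $\uy$‑positions; and (ii) in each case the net change of the multiset of (position, rank) pairs is precisely ``the branching $\uy$‑particle creates a copy of itself, the copy gets the next free rank $(i,1+M^i_{s-})$ in the relevant family, and the minimal‑rank selected particle is removed.'' In (2b)–(2c) the copy lands on the position being removed, so nothing moves and only the counter $M^i$ advances; the counters $M^i_t$ are what keep the $\uy$‑ranks compatible with a genuine ranked‑BBM family structure. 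The relabellings in (1b) and (2a) only permute which physical slot carries which (position, rank) pair, and reassign mutually independent driving Brownian motions among the slots, so they do not change the law of $\zeta_t$. This gives $\zeta_t$ the rank‑selected law, hence $\huy_t$ the law of $\uy_t$.

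For \eqref{ymx}, the coupling is engineered so that $\uy^\ell_t\le\ux^\ell_t$ for every $\ell$ and $t$, and I would check this invariant is preserved in each case. The only nontrivial point is that when $\ux^m$ jumps to $\ux^n_{s-}$ (cases (1a), (1b), (2a)) the companion $\uy$‑move must keep every $\uy$‑slot weakly below its $\ux$‑partner: sending $\uy^h$ straight to $\uy^n_{s-}$ would fail, since $\uy^n_{s-}$ may exceed $\ux^h_{s-}$ (the point of the caption of Figure \ref{fig:test1}), so instead $\uy^m$ goes to $\uy^n_{s-}\le\ux^n_{s-}=\ux^m_s$ and the vacated value $\uy^m_{s-}\le\ux^m_{s-}\le\ux^h_{s-}=\ux^h_s$ is handed to the minimal‑rank slot; case (2a) is analogous. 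Granting $\uy^\ell_t\le\ux^\ell_t$ for all $\ell$, for any $a\in\R$ one has $\abs{\huy_t\cap[a,\infty)}=\#\{\ell:\uy^\ell_t\ge a\}\le\#\{\ell:\ux^\ell_t\ge a\}=\abs{\ux_t\cap[a,\infty)}$ (padding coordinates at $-\infty$, if any, contribute $0$), i.e.\ $\huy_t\cle\ux_t$.

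I expect the main obstacle to be part (b): verifying that the five‑way case split genuinely reproduces the rank‑selected law. The two delicate points are the clock matching — that no branching is spuriously created or lost, which reduces to $\pi$ being a bijection (and to the branching times staying independent of the $\uy$‑positions) — and the rank/family bookkeeping, i.e.\ that the counters $M^i_t$ make the $\uy$‑ranks evolve like those of an actual ranked BBM. Checking $\uy^\ell_t\le\ux^\ell_t$ is also case‑by‑case but essentially mechanical, the one subtle part being why the indirect relabelling in (1b) (and (2a)) is forced.
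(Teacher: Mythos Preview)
Your proposal is correct and follows essentially the same approach as the paper: part (a) is by construction, part (b) is checked by verifying that at each branching time the $\uy$-process undergoes ``minimal-rank particle jumps to the newborn,'' and \eqref{ymx} is obtained by verifying the slotwise inequality $\uy^\ell_t\le\ux^\ell_t$ case by case, with (1b) and (2a) the only nontrivial cases. Your clock-matching via the transposition $\pi$ of slots $m$ and $h$ makes explicit what the paper leaves implicit (it simply says the $\uy$-branchings ``occur at rate~1 because they copy the branchings of the $\ux^\ell$-particles''), but the argument is the same.
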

\begin{proof}  
By construction, the labels of the $\ux^\ell$-processes were added to a realization of the $N$-BBM $\ux_t$ given by \eqref{xt1}. So, the unlabeled positions of $\ux^\ell$-particles coincide with  $\ux_t$.

Disregarding the labels, we see that, at each branching event, the lower ranked particle of the leftmost present $\uy^\ell$-family jumps to the branching $\uy^\ell$-particle. Those branchings occur at rate~1 because they copy the branchings of the $\ux^\ell$-particles. Since  $\uy^\ell$-particle increments clone the (Brownian) $\ux^\ell$-particle increments, we have that the positions of the $\huy^\ell$-marginal of the coupling has the same distribution as $\uy_t$ (see remark after \eqref{ytbt}).

The domination $\uy^\ell_t\le \ux^\ell_t$ holds at time 0 and it is preserved between branching events because the Brownian increments are the same. The domination persists after each branching event obviously in cases (1a), (2b) and (2c).
In case (1b), we have $\uy^m_s = \uy^n_{s-}\le \ux^n_{s-} =\ux^m_s$ and $\uy^h_s= \uy^m_{s-}\le \ux^m_{s-}\le \ux^h_{s-}=\ux^h_s$ because $\ux^m_{s-}$ is the minimal $\ux_{s-}$-particle and the $h$-th $\ux$ particle does not jump at $s$.
In case (2a), $\ux^m_s=\ux^h_{s}=\ux^h_{s-} \ge \ux^m_{s-} \ge \uy^m_{s-}= \uy^h_s=\uy^m_s$ (see Figure 2).
Iterating over all branching events occurring before $t$, we get $\uy^\ell_t\le \ux^\ell_t$ and \eqref{ymx}. 
\end{proof}

\paragraph{The post-selection process.}
We construct an auxiliary process $(V^1_t,\dots,V^N_t)$, with initial ordered positions $X^\ell_0\le V^\ell_0$, and such that, if $\ux^\ell_0=B^{i,1}_0$, then  $V^\ell$ follows the branching events and Brownian increments of the $i$ family but displacing the initial point to $V^\ell_0$. Then,  at time $t$ selects the $N$ rightmost positions of the offsprings of all $V^\ell$ particles:
\begin{align}
\label{388}
  V_t = \hbox{$N$ rightmost particles of the set } \cup_{\ell=1}^N\{V^\ell_0-X^\ell_0+ B^{i,j}_t : B^{i,1}_0=X^\ell_0\}. 
\end{align}
Since $\ux_t$ is dominated by the $N$ rightmost particles of $\uz_t$ and in turn these are dominated by $V_t$, we have 
\begin{align}
\label{389}
  \ux_t\cle V_t.
\end{align}
In particular, if $V^\ell_0=\ux^\ell_0$ for all $\ell$, we have $V_\delta= \ux^{\delta,+}_\delta$. 
Hence, with this coupling, 
\begin{align}
\label{xmx+}
  \ux_\delta\cle\ux^{\delta,+}_\delta.
\end{align}
 
\paragraph{Proof of Proposition  \ref{domination}.}
At time $\delta$, the first inequality in \eqref{domination1} follows from \eqref{x-my} and \eqref{ymx}, while  the second inequality is \eqref{xmx+}. To iterate the upperbound, assume $\ux_{k\delta} \cle \hux^{\delta,+}_{k\delta}$ and let $V^\ell_{k\delta}$ be the $\ell$-th  $\hux^{\delta,+}_{k\delta}$ particle such that $\ux_{k\delta}^\ell \le V^\ell_{k\delta}$ for all $\ell$. Use the construction \eqref{388} such that $V_{(k+1)\delta}$ consist on the $N$ rightmost particles of a BBM starting at $V_{k\delta}$ obtained by cloning the increments and branchings of the BBM  containing the trajectory of $\big(X_t:t\in[k\delta,(k+1)\delta)\big)$.
The inequality   $\ux_{(k+1)\delta}\cle V_{(k+1)\delta}$ holds by \eqref{389}. Define $\hux^{\delta,+}_{(k+1)\delta}:= V_{(k+1)\delta}$. The process  $(\hux^{\delta,+}_{k\delta}:k\ge 0)$ so defined has the same distribution as $\ux^{\delta,+}_{k\delta}:k\ge0)$. 

To iterate the lower bound, let $\huy^\delta_\delta$ be defined by the coupling for $t=\delta$. Let $\huy^\delta_t$ for $t\in[k\delta, (k+1)\delta)$ be defined by the initial condition $\huy^\delta_{k\delta}= \hux^{\delta,-}_{k\delta}$ and be governed by the increments and branching times of the $\hux_t$-particles for $t$ in that interval, as in the coupling. Let $\hux^{\delta,-}_{(k+1)\delta}$ be the offsprings of the $\hY^\delta$ families that have not lost any member. As a consequence we get  $\hux^{\delta,-}_{(k+1)\delta}\cle\hux_{(k+1)\delta}$. \qed

\paragraph{Barriers embedded on BBM} 

\begin{proposition}
  \label{ymx1}
There exist a coupling $ (\cB^-,\cB,\cB^+) $ with marginals distributed as the ranked {\rm BBM}  $ \cB$ such that 
the processes $\hux^{\delta,\pm}_{k\delta}$ defined by
\begin{align}
\ux_t &=\ux_t(\cB),\nn\\
  \hux^{\delta,-}_{k\delta} &:= \ux^{\delta,-}_{k\delta} (\cB^-),\nn\\
\hux^{\delta,+}_{k\delta} &:= \ux^{\delta,-}_{k\delta} (\cB^+),\nn
\end{align}
see \eqref{u45}, satisfy the order $ \hux^{\delta,-}_{k\delta}\cle\ux_{k\delta}\cle\hux^{\delta,+}_{k\delta}$.
\end{proposition}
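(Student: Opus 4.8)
\emph{The plan} is to re-read Proposition~\ref{domination} at the level of the driving noise. Instead of exhibiting the coupled triple of processes $(\hux^{\delta,-}_{k\delta},\hux_{k\delta},\hux^{\delta,+}_{k\delta})$ directly, one constructs three ranked BBMs $\cB^-,\cB,\cB^+$, each distributed as in \eqref{cB} and coupled to one another, so that pushing $\cB^\pm$ through the closed-form barrier maps \eqref{u45} reproduces the two extreme barriers. The point is that the iterative construction carried out in the proof of Proposition~\ref{domination} uses, slot by slot on the intervals $[k\delta,(k+1)\delta)$, only Brownian increments and rate-one branchings that can be organized into (coupled copies of) BBM families; completing the genealogies of the particles that are ``cut'' with fresh independent families then yields $\cB^\pm$, and the order $\hux^{\delta,-}_{k\delta}\cle\ux_{k\delta}\cle\hux^{\delta,+}_{k\delta}$ is inherited from Proposition~\ref{domination}.

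\emph{Upper barrier.} Take $\cB^+=\cB$ on $[0,\delta]$, so that $\ux^{\delta,+}_\delta(\cB^+)$ is the set of the $N$ rightmost particles of the BBM $\cB$ at time $\delta$ and \eqref{xmx+} gives $\ux_\delta(\cB)\cle\ux^{\delta,+}_\delta(\cB^+)$. On each later slot $[k\delta,(k+1)\delta)$ apply the post-selection device \eqref{388}: with $V^\ell_{k\delta}$ enumerating $\hux^{\delta,+}_{k\delta}$ and matched so that $\ux^\ell_{k\delta}\le V^\ell_{k\delta}$, declare the continuation over that slot of the $\cB^+$-family rooted at $V^\ell_{k\delta}$ to be the translate by $V^\ell_{k\delta}-\ux^\ell_{k\delta}$ of the BBM family that the $\ux^\ell$-trajectory follows, and attach independent families to the roots that are not selected. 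This defines $\cB^+$ on $[0,\infty)$; translation invariance of Brownian motion, independence of the extra families, and the strong Markov property at the grid times $k\delta$ give that $\cB^+$ has the ranked-BBM law, while by construction the positions produced by \eqref{u45} coincide with those of the iteration in the proof of Proposition~\ref{domination}. Hence $\ux_{k\delta}(\cB)\cle\ux^{\delta,+}_{k\delta}(\cB^+)$ for every $k$.

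\emph{Lower barrier.} Run the trajectory-wise coupling of Section~\ref{s3} on $[0,\delta]$ started from $\uz_0$ (with $\uy^\ell_0=\ux^\ell_0$) and then inductively on $[k\delta,(k+1)\delta)$ started from $\hux^{\delta,-}_{k\delta}$ and driven by the $\hux$-increments. A $\huy$-family that passes through a slot without losing a member undergoes no rank jump-out, so its trajectory is a genuine BBM subtree; complete the families that are cut, and supply the $N-|\hux^{\delta,-}_{k\delta}|$ missing roots, with independent BBM families, to obtain $\cB^-$. Since a cut family violates the defining inequality of the lower-barrier map in \eqref{u45} already at the time it is cut, the completions do not affect $\ux^{\delta,-}_{k\delta}(\cB^-)$, which therefore equals $\hux^{\delta,-}_{k\delta}$; and $\hux^{\delta,-}_{k\delta}\cle\ux_{k\delta}(\cB)$ is Proposition~\ref{domination}.

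\emph{The main obstacle} is the bookkeeping needed to check that these spliced collections truly carry the ranked-BBM law \eqref{cB} — that translating and re-grafting family trajectories and inserting independent completions preserves both the within-family branching-Brownian structure and the mutual independence of the $N$ ancestral families — and that \eqref{u45} applied to $\cB^\pm$ returns exactly the iteratively built $\hux^{\delta,\pm}_{k\delta}$. The delicate point is reconciling the leftmost / lowest-rank jumps used inside the couplings, which can move a trajectory from one family to another, with the static family indexing of \eqref{cB}. This is handled by observing that only families that are never cut ever contribute to the barriers, and that such families never undergo a jump-out, so on them the genealogy is that of an ordinary BBM; the strong Markov property of the $N$-BBM at the grid times then supplies the independence across slots.
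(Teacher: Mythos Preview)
Your approach is essentially the paper's own: take the coupled barriers $\hux^{\delta,\pm}$ built in Proposition~\ref{domination} and complete the ``missing'' trajectories with fresh independent BBM to obtain $\cB^\pm$, then check that \eqref{u45} applied to $\cB^\pm$ reproduces $\hux^{\delta,\pm}$. The paper's sketch is in fact terser than yours and explicitly leaves the verification to the reader.

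The only difference is one of phrasing for $\cB^-$. The paper says: at every branching time $s$ of the auxiliary process $\huy^\delta$ in a slot, attach an independent BBM at the space--time point where the lowest-ranked particle is abandoned, and then rank the resulting tree so that $\huy^\delta$ is its rank-selected $N$-BBM. You say: the intact $\huy$-families are already BBM subtrees, so only the cut families (and the missing roots) need independent completions. These are two descriptions of the same object --- every branching time is precisely a moment at which some family is cut --- and the paper's ``rank so that $\huy^\delta$ is the rank-selected process'' is exactly what makes your claim ``the completions do not affect $\ux^{\delta,-}_{k\delta}(\cB^-)$'' go through. Your explicit mention of the $N-|\hux^{\delta,-}_{k\delta}|$ missing roots and your closing paragraph on the bookkeeping obstacle (reconciling the label-jumps in the coupling with the static family indexing of \eqref{cB}) are honest additions that the paper's sketch omits.
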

\emph{Sketch proof.} 
  Take the coupled barriers  $\hux^{\delta,\pm}_{k\delta}$ and the auxiliary process $\huy^\delta_t$ coupled to $\hux_{k\delta}$ in Proposition  \ref{domination} and its proof. To construct $\cB^+$, at each time $\ell\delta$ attach independent BBM to the particles killed at that time for the process $\hux^{\delta,+}_{k\delta}$. To construct $\cB^-$, at each branching time $s\in[k\delta,(k+1)\delta)$  for the auxiliary process $(\huy^\delta_t: t\in k\delta,(k+1)\delta))$ associated to that time interval, attach an independent BBM to the space-time point $(\huy^\delta_s,s)$. Then rank the resulting BBM process in such a way that $\huy^\delta_t$ is its rank-selected $N$-BBM process in each time interval. For this it suffices to arrange the ranks in such a way that when there are two branches at the same point and one of them must be erased from the $\huy^\delta$ process, then the branch belonging to the $\huy^\delta$ process gets a bigger rank than the other branch. We leave to the reader the details of the construction and the proof that those processes satisfy the conditions of the proposition. \qed

\section{Hydrodynamic limit for the barriers}
\label{s4}

In this section, we prove Theorem \ref{thm4}, namely that the  stochastic
barriers converge in the macroscopic limit $N\to \infty$ to the
deterministic barriers.
Recall that
$\ro $ is a probability density on $\mathbb R$ with  a left boundary  $L_0$, 
and the $N$-BBM  starting from  $\ux_0=(\ux^1_0,\dots,\ux^N_0)$, iid continuous random variables with density $\ro$. 

It is convenient to have a notation for the cutting points for the macroscopic barriers $S^{\delta,\pm}_{k\delta}$ defined in \eqref{dpm1}. For $\delta>0$ and natural number $\ell\le k$ denote 
\begin{align}
\label{lcp}
  L^{\delta,+}_{\ell\delta} := \inf_r\Bigl\{\int_r^{\infty} S^{\delta,+}_{\ell\delta}\ro(r')dr' < 1\Bigr\}; \qquad
L^{\delta,-}_{\ell\delta} := \inf_r\Bigl\{\int_r^{\infty} S^{\delta,-}_{\ell\delta}\ro(r')dr'< e^{-\delta} \Bigr\}.
\end{align}
Let $B_0$ be a continuous random variable with density $\ro$. Let $B_{[0,t]}=(B_s:s\in[0,t])$ be Brownian motion starting from $B_0$, with increments independent of $B_0$. Let $N_t$ be the random size at time $t$ of a BBM family starting with one member; we have $EN_t = e^t$. Recall $e^tG_t\ro$ is the solution of $u_t=\frac 12 u_{rr}+u$ with $u(\cdot,0)=\ro$. With this notation, we have the following representation of $e^tG_t\ro$ and the macroscopic barriers as expectation of functions of the Brownian trajectories.
\begin{lemma}
\label{th1}
For every test function $\varphi\in L^{\infty}(\mathbb R)$ and every $t>0$, we have
	 \begin{eqnarray}
    \label{sb1.1}
\int \varphi(r) e^tG_t\ro(r)\,dr = e^tE[\varphi(B_t)]
 \end{eqnarray}
Furthermore, 
\begin{align}
\label{ivs1}
 \int \varphi(r) S^{\delta,+}_{k\delta}\ro(r)\,dr &= e^{k\delta} E\big[\varphi(B_{k\delta})\one\big\{B_{\ell\delta} >L_{\ell\delta}^{\delta,+}:1\le \ell\le k\big\}\big].\\
\int \varphi(r) S^{\delta,-}_{k\delta}\ro(r)\,dr &= e^{k\delta} E\big[\varphi(B_{k\delta})\one\big\{B_{\ell\delta} >L_{\ell\delta}^{\delta,-}:0\le \ell\le k-1\big\}\big].
 \end{align}
\end{lemma}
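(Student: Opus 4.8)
The plan is to prove the three identities by combining the Feynman–Kac representation of the diffuse-and-grow semigroup $e^tG_t$ with an induction on $k$ that peels off one $\delta$-step at a time. First I would dispose of \eqref{sb1.1}: by definition $e^tG_t\ro(r) = e^t\int \frac{1}{\sqrt{2\pi t}}e^{-(r-x)^2/2t}\ro(x)\,dx$, so $\int\varphi(r)e^tG_t\ro(r)\,dr = e^t\int\int \varphi(r)\,p_t(x,r)\,\ro(x)\,dr\,dx$ where $p_t$ is the Brownian transition density; since $B_0$ has density $\ro$ and the Brownian increment over $[0,t]$ is independent of $B_0$, the double integral is exactly $E[\varphi(B_t)]$, giving $\int\varphi\, e^tG_t\ro = e^tE[\varphi(B_t)]$. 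The only point to check is the Fubini/Tonelli swap, which is justified because $\varphi\in L^\infty$ and $\ro\in L^1$.

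For the upper-barrier identity \eqref{ivs1} I would induct on $k$. The base case $k=0$ is $\int\varphi\,\ro = E[\varphi(B_0)]$, which holds since $B_0\sim\ro$ (the indicator over an empty index set is $1$). For the inductive step, recall $S^{\delta,+}_{k\delta}\ro = C_1 e^\delta G_\delta\,S^{\delta,+}_{(k-1)\delta}\ro$. The cut operator acts as $C_1 u(a) = u(a)\one\{\int_a^\infty u(r)dr < 1\} = u(a)\one\{a > L^{\delta,+}_{k\delta}\}$ by the definition \eqref{lcp} of the cutting point (using that the relevant densities are continuous and the mass integral is monotone in $a$, so the level set $\{a: \int_a^\infty < 1\}$ is exactly a half-line $(L^{\delta,+}_{k\delta},\infty)$). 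Hence
\begin{align*}
\int\varphi(r)\,S^{\delta,+}_{k\delta}\ro(r)\,dr
 = \int \varphi(r)\,\one\{r> L^{\delta,+}_{k\delta}\}\; e^\delta G_\delta\, S^{\delta,+}_{(k-1)\delta}\ro(r)\,dr
 = \int \tilde\varphi(r)\, e^\delta G_\delta\, S^{\delta,+}_{(k-1)\delta}\ro(r)\,dr,
\end{align*}
with $\tilde\varphi(r) := \varphi(r)\one\{r>L^{\delta,+}_{k\delta}\}\in L^\infty$. Now I would apply the semigroup identity in the form ``$\int \tilde\varphi\, e^\delta G_\delta v\,dr = e^\delta\int (G_\delta\tilde\varphi)\,v\,dr$'' (self-adjointness of the Gaussian kernel in $L^2$, extended to the $L^1$–$L^\infty$ pairing by the same Fubini argument as above), turning the left side into $e^\delta\int (G_\delta\tilde\varphi)(r)\,S^{\delta,+}_{(k-1)\delta}\ro(r)\,dr$. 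Applying the inductive hypothesis with test function $G_\delta\tilde\varphi$ gives $e^{k\delta}\,E\big[(G_\delta\tilde\varphi)(B_{(k-1)\delta})\,\one\{B_{\ell\delta}>L^{\delta,+}_{\ell\delta}: 1\le\ell\le k-1\}\big]$, and finally the Markov property of Brownian motion at time $(k-1)\delta$ rewrites $(G_\delta\tilde\varphi)(B_{(k-1)\delta}) = E[\tilde\varphi(B_{k\delta})\mid \cF_{(k-1)\delta}] = E[\varphi(B_{k\delta})\one\{B_{k\delta}>L^{\delta,+}_{k\delta}\}\mid\cF_{(k-1)\delta}]$, so the tower property absorbs the new factor and yields exactly \eqref{ivs1}.

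The lower-barrier identity is the same induction run with the composition in the other order, $S^{\delta,-}_{k\delta}\ro = e^\delta G_\delta\, C_{e^{-\delta}}\,S^{\delta,-}_{(k-1)\delta}\ro$, so that the cut at level $e^{-\delta}$ introduces the factor $\one\{r>L^{\delta,-}_{k\delta}\}$ applied to $S^{\delta,-}_{(k-1)\delta}\ro$ \emph{before} the diffusion step; consequently the indicator constraint in the Brownian expectation runs over the times $\ell\delta$ with $0\le\ell\le k-1$ rather than $1\le\ell\le k$, and the running prefactor $e^{k\delta}$ is assembled from the $k$ factors of $e^\delta$ produced by the $k$ applications of $e^\delta G_\delta$, exactly matching the statement. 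I expect the only genuinely delicate point — everything else is bookkeeping — to be the identification $C_m u(a) = u(a)\one\{a>L\}$ for the appropriate cutting point $L$: this needs that $a\mapsto\int_a^\infty u(r)\,dr$ is continuous and strictly decreasing on the support, which holds here because each $S^{\delta,\pm}_{k\delta}\ro$ is obtained by at least one Gaussian smoothing (hence is continuous and everywhere positive after the first step) and because the total mass is $1$ (resp. $e^{-\delta}$ after the cut), so the level is attained at a single point; the case $k=1$ with the possibly-discontinuous initial datum $\ro$ still works since the cut in the lower barrier at step $1$ acts on $\ro$ directly but only through the half-line $\{a: \int_a^\infty\ro < e^{-\delta}\}$, which is still a half-line by monotonicity of $a\mapsto\int_a^\infty\ro$.
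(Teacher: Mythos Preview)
Your argument is correct and is precisely the routine unpacking the paper has in mind: the paper's own proof is the single word ``Immediate.'' One small slip: in the lower-barrier paragraph the cut at step $k$ acts on $S^{\delta,-}_{(k-1)\delta}\ro$, so the cutting point that appears is $L^{\delta,-}_{(k-1)\delta}$ (defined via $S^{\delta,-}_{(k-1)\delta}\ro$ in \eqref{lcp}), not $L^{\delta,-}_{k\delta}$; your stated conclusion with indices $0\le\ell\le k-1$ is nonetheless correct.
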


\begin{proof} Immediate.
\end{proof}
Recall the definition of $\cB$ in \eqref{cB}, in particular the trajectory $B_{[0,t]}^{i,j}$ is distributed as $B_{[0,t]}$ for all $i,j$ and the families $(B^{i,j}_{[0,t]}:j\in\{1,\dots,N^i_t\})$, for $i\in\{1,\dots,N\}$ are iid.  
\begin{proposition} 
\label{propo12}
Let $g:C(\R^+,\R)\to\R$ be a bounded measurable function  and define
\begin{align}
  \mu^N_t g:= \frac1N\sum_{i=1}^N\sum_{j=1}^{N_t^i} g(B_{[0,t]}^{i,j}).
\end{align}
Then, 
    \begin{align}
\label{llnmu}
  \lim_{N\to\infty}\mu^N_tg  = e^t Eg(B_{[0,t]}),\qquad \hbox{a.s. and in }L^1.
  \end{align}
\end{proposition}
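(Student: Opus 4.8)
The quantity $\mu^N_t g$ is an average of $N$ i.i.d.\ random variables, one per initial particle: writing $G^i := \sum_{j=1}^{N_t^i} g(B^{i,j}_{[0,t]})$, the families $(B^{i,j}_{[0,t]} : 1\le j\le N^i_t)$, $i=1,\dots,N$, are i.i.d., so the $G^i$ are i.i.d.\ copies of $G := \sum_{j=1}^{N_t} g(B^{j}_{[0,t]})$, the sum of $g$ over the trajectories of a single BBM family. Thus $\mu^N_t g = \tfrac1N\sum_{i=1}^N G^i$, and the natural tool is the strong law of large numbers together with an $L^1$ law of large numbers. The plan is: (i) identify the mean $E G$; (ii) verify an integrability condition strong enough to invoke the SLLN (for a.s.\ convergence) and to get $L^1$ convergence; (iii) conclude.

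For step (i), since $g$ is bounded, say $\|g\|_\infty =: c < \infty$, we have $|G| \le c\, N_t$, and $N_t$ has all moments finite (it is the family size of a Yule process, so $N_t$ is geometric with mean $e^t$); in particular $E|G| \le c\, e^t < \infty$. To compute $E G$ exactly, condition on the genealogical tree of the family and note that, given the tree structure, each of the $N_t$ trajectories is distributed as $B_{[0,t]}$ (a Brownian path started from $B_0\sim\rho$, with the caveat that trajectories share initial segments along shared ancestry — but this does not affect the one-dimensional marginal of each individual trajectory). Hence $E\big[G \mid \text{tree}\big] = N_t \cdot E g(B_{[0,t]})$ — here I use that $E g(B^j_{[0,t]}) = E g(B_{[0,t]})$ for every $j$ regardless of the tree — and therefore $E G = E[N_t]\, E g(B_{[0,t]}) = e^t\, E g(B_{[0,t]})$. (This is exactly the many-to-one / first-moment formula for BBM.)

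For steps (ii)–(iii): the $G^i$ are i.i.d.\ with finite first moment $e^t E g(B_{[0,t]})$, so Kolmogorov's SLLN gives $\tfrac1N\sum_{i=1}^N G^i \to e^t E g(B_{[0,t]})$ almost surely. For the $L^1$ convergence, $\{\tfrac1N\sum_{i=1}^N G^i : N\ge 1\}$ is uniformly integrable — this is automatic for Cesàro averages of an i.i.d.\ integrable sequence, or one can simply note $|\tfrac1N\sum G^i| \le \tfrac cN \sum N_t^i$, and $\tfrac1N\sum N_t^i \to e^t$ in $L^1$ (again i.i.d.\ averages with finite mean), which with the a.s.\ convergence forces $L^1$ convergence of $\mu^N_t g$ by Scheffé-type / dominated-convergence reasoning. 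This yields \eqref{llnmu}.

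**Main obstacle.** There is no deep obstacle here — the statement is essentially the law of large numbers dressed up with the many-to-one lemma. The only point requiring a little care is the computation $E G = e^t E g(B_{[0,t]})$: one must be slightly careful that, although the $N_t$ trajectories within a family are dependent (they share ancestral segments), each individual trajectory $B^j_{[0,t]}$ still has the law of $B_{[0,t]}$, so linearity of expectation gives the clean formula regardless of the correlations; and one must check that summing over a random number $N_t$ of terms is legitimate, which follows from $E[c\,N_t] < \infty$ and Fubini/Wald. Everything else is routine.
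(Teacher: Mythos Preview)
Your proposal is correct and follows essentially the same approach as the paper: both recognize that $\mu^N_t g$ is the average of the i.i.d.\ family sums $G^i=\sum_{j=1}^{N^i_t} g(B^{i,j}_{[0,t]})$, compute the mean via the many-to-one lemma, and invoke a law of large numbers. The only cosmetic difference is that the paper bounds the variance by $\tfrac1N E(N_t)^2\|g\|_\infty^2$ to justify the SLLN, whereas you appeal directly to Kolmogorov's SLLN for i.i.d.\ integrable summands and then argue $L^1$ convergence via uniform integrability; both routes are standard and equally short here.
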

\begin{proof}
    By the many-to-one Lemma we have
  \begin{align}
   E  \mu^N_tg=  EN_t\, Eg(B_{[0,t]}) =  e^t Eg(B_{[0,t]}),
  \end{align}
Furthermore,  the variance of $\mu^N_tg$ is order $1/N$. Indeed, using family independence we get
\begin{align}
  {\rm Var}(\mu^N_tg)  &= \frac1N {\rm Var}\Bigl(\sum_{j=1}^{N_t^i}  g(B_{[0,t]}^{i,j})\Bigr)\le \frac1N E(N_t^i)^2 \|g\|_\infty^2.\nn
\end{align}
This is enough to get the strong law of large numbers \eqref{llnmu}.
\end{proof}
\begin{corollary}[Hydrodynamics of the BBM]
  \begin{align}
  \lim_{N\to\infty}  \frac1N\sum_{i=1}^N\sum_{j=1}^{N_t^i} \varphi(B^{i,j}_t ) =  e^tE\varphi(B_t)= e^t \int\varphi(r)G_t\ro(r)dr,\qquad \hbox{a.s. and in }L^1.
  \end{align}
\end{corollary}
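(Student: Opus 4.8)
The plan is to deduce the Corollary directly from Proposition \ref{propo12} by specializing the test function to one that depends only on the endpoint of the Brownian trajectory. Concretely, I would fix $t>0$ and $\varphi\in L^\infty(\R)$ and define $g:C(\R^+,\R)\to\R$ by $g(w):=\varphi(w(t))$. This $g$ is bounded, with $\|g\|_\infty\le\|\varphi\|_\infty$, and it is measurable since the evaluation map $w\mapsto w(t)$ is continuous and $\varphi$ is measurable; hence $g$ is an admissible test function for Proposition \ref{propo12}.

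With this choice, $g(B^{i,j}_{[0,t]})=\varphi(B^{i,j}_t)$, so the empirical average $\mu^N_tg$ appearing in Proposition \ref{propo12} is exactly $\frac1N\sum_{i=1}^N\sum_{j=1}^{N^i_t}\varphi(B^{i,j}_t)$. Applying \eqref{llnmu} then yields
\[
\lim_{N\to\infty}\frac1N\sum_{i=1}^N\sum_{j=1}^{N^i_t}\varphi(B^{i,j}_t)=e^tE\,g(B_{[0,t]})=e^tE\varphi(B_t),
\]
both almost surely and in $L^1$, where $B_{[0,t]}$ is a Brownian path as in the statement of Proposition \ref{propo12}.

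To identify the right-hand side with $e^t\int\varphi(r)G_t\ro(r)\,dr$, I would invoke \eqref{sb1.1} of Lemma \ref{th1} with the test function $\varphi$ and time $t$, which gives $e^tE[\varphi(B_t)]=\int\varphi(r)e^tG_t\ro(r)\,dr$, and then use linearity of $G_t$ to factor out $e^t$. There is essentially no obstacle here: the entire content of the Corollary is carried by Proposition \ref{propo12} (itself a second-moment/many-to-one argument), and the only things to verify are the trivial boundedness and measurability of $g$ together with the bookkeeping that $\mu^N_tg$ coincides with the displayed empirical measure.
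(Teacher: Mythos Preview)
Your proposal is correct and matches the paper's intended argument: the Corollary is stated immediately after Proposition~\ref{propo12} with no separate proof, so it is meant to follow by exactly the specialization $g(w)=\varphi(w(t))$ that you describe, together with \eqref{sb1.1} to rewrite $e^tE\varphi(B_t)$ in integral form.
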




By Definitions \eqref{2-2} and \eqref{2-3} of the microscopic cutting points $
  L^{N,\delta,\pm}_{\ell\delta}$  we have
\begin{align}
  \frac1N\sum_{i=1}^N\sum_{j=1}^{N_{k\delta}^i} \one\{B^{i,j}_{\ell\delta}\ge L^{N,\delta,+}_{\ell\delta}: 1\le\ell\le k\} &= 1\nn\\
\frac1N\sum_{i=1}^N\sum_{j=1}^{N_{k\delta}^i} \one\{B^{i,j}_{\ell\delta}\ge L^{N,\delta,-}_{\ell\delta}: 0\le\ell\le k-1\} &=  1 - O(1/N), \label{4-9}
\end{align}
see paragraph after \eqref{2-3}. Let
\begin{align}
 A_{k\delta} ^{N,\delta,+}:=\prod_{\ell=1}^k \one\big\{ B^{i,j}_{\ell\delta}\ge L^{N,\delta,+}_{\ell\delta}\big\};\qquad
&A_{k\delta} ^{\delta,+}:=\prod_{\ell=1}^k \one\big\{ B^{i,j}_{\ell\delta}\ge L^{\delta,+}_{\ell\delta}\big\}\nn\\
 A_{k\delta} ^{N,\delta,-}:=\prod_{\ell=0}^{k-1} \one\big\{ B^{i,j}_{\ell\delta}\ge L^{N,\delta,-}_{\ell\delta}\big\};\qquad
&A_{k\delta} ^{\delta,-}:=\prod_{\ell=0}^{k-1} \one\big\{ B^{i,j}_{\ell\delta}\ge L^{\delta,-}_{\ell\delta}\big\}\nn
\end{align}
These quantities depend on $i,j$ but we suppress them in the notation. 
\begin{proposition}
  \label{llnlt}
We have
  \begin{align}
  \label{llnlt1}
  \frac1N\sum_{i=1}^N\sum_{j=1}^{N_{k\delta}^i}  
\Bigl| A_{k\delta} ^{N,\delta,\pm}- A_{k\delta} ^{\delta,\pm}
\Bigr|
\mathop{\longrightarrow}_{N\to\infty}0 ,\quad a.s. \hbox{ and in }L^1.
  \end{align}
\end{proposition}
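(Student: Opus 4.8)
\emph{Proof plan.} I would prove \eqref{llnlt1} by induction on $k$, treating both signs together and assuming the statement for all smaller values of $k$ (for $k=0$ the two products are empty and equal $1$). The first step is combinatorial: since $A^{N,\delta,\pm}_{k\delta}$ and $A^{\delta,\pm}_{k\delta}$ are products of at most $k$ indicator functions, and $\bigl|\prod_m a_m-\prod_m b_m\bigr|\le\sum_m|a_m-b_m|$ for $a_m,b_m\in\{0,1\}$, we get
\[
\frac1N\sum_{i=1}^N\sum_{j=1}^{N^i_{k\delta}}\bigl|A^{N,\delta,\pm}_{k\delta}-A^{\delta,\pm}_{k\delta}\bigr|\ \le\ \sum_\ell\ \frac1N\sum_{i=1}^N\sum_{j=1}^{N^i_{k\delta}}\one\bigl\{B^{i,j}_{\ell\delta}\in I^N_\ell\bigr\},
\]
with $\ell$ ranging over $\{1,\dots,k\}$ in the upper case and $\{0,\dots,k-1\}$ in the lower one, and $I^N_\ell$ the closed interval with endpoints $L^{N,\delta,\pm}_{\ell\delta}$ and $L^{\delta,\pm}_{\ell\delta}$; the two endpoints of $I^N_\ell$ carry no limiting mass, since $B_{\ell\delta}$ has a density for $\ell\ge1$ and $B_0$ has density $\ro$. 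So it is enough to show each right-hand summand tends to $0$ a.s.; the $L^1$ statement is then automatic, because the left-hand side is dominated by $\frac1N\sum_iN^i_{k\delta}$, which converges to $e^{k\delta}$ in $L^1$ by the law of large numbers for i.i.d.\ summands with finite mean, making the quantities in \eqref{llnlt1} uniformly integrable.

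The heart of the matter is the behaviour of the discrete cutting points. The induction hypothesis lets one replace $A^{N,\delta,+}_{(\ell-1)\delta}$ by $A^{\delta,+}_{(\ell-1)\delta}$ (resp.\ $A^{N,\delta,-}_{\ell\delta}$ by $A^{\delta,-}_{\ell\delta}$) inside the empirical counting functions defining $L^{N,\delta,\pm}_{\ell\delta}$ in \eqref{2-2}--\eqref{2-3}, with an error uniform in the threshold and a.s.\ $o(1)$. Combining this with Proposition \ref{propo12}, the Markov property of Brownian motion, and the representations of Lemma \ref{th1}, those counting functions converge, uniformly in the threshold $r$ and a.s., to the continuous non-increasing functions
\[
r\ \longmapsto\ \int_r^\infty e^\delta G_\delta S^{\delta,+}_{(\ell-1)\delta}\ro\qquad\text{and}\qquad r\ \longmapsto\ e^\delta\int_r^\infty S^{\delta,-}_{\ell\delta}\ro,
\]
and in both cases $L^{\delta,\pm}_{\ell\delta}$ is exactly the point where the relevant limit equals $1$ (for the upper barrier this is because $C_1$ freezes the tail integral at $1$ to the left of the cut; for the lower barrier it is \eqref{lcp} with the extra factor $e^\delta$). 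By \eqref{4-9}, $L^{N,\delta,\pm}_{\ell\delta}$ is a threshold where the empirical counting function crosses $1$ up to an $O(1/N)$ defect, so comparing with two deterministic thresholds straddling $L^{\delta,\pm}_{\ell\delta}$, where the limit equals $1+\eta$ and $1-\eta$ respectively, shows that $I^N_\ell$ is eventually contained, a.s., in a deterministic interval $E_\eta\ni L^{\delta,\pm}_{\ell\delta}$.

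For the upper barrier and for the lower barrier with $\ell\ge1$ the integrands above are Gaussian convolutions, hence strictly positive, so the limits are strictly decreasing near $L^{\delta,\pm}_{\ell\delta}$ and $E_\eta$ can be taken to shrink to the single point $L^{\delta,\pm}_{\ell\delta}$ as $\eta\downarrow0$. Since $\|\ro\|_\infty<\infty$, the empirical distribution of $\{B^{i,j}_{\ell\delta}\}$ over the families alive at time $k\delta$ converges a.s.\ to $e^{k\delta}G_{\ell\delta}\ro\,dr$, whose density is at most $e^{k\delta}\|\ro\|_\infty$; hence the $\ell$-th summand is asymptotically bounded by $e^{k\delta}\|\ro\|_\infty\,|E_\eta|$, and letting $\eta\downarrow0$ makes it vanish.

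The one case I expect to be genuinely delicate is the lower barrier at $\ell=0$. There the limiting counting function is $r\mapsto e^\delta\int_r^\infty\ro$, and $\ro$ may vanish on a left-neighbourhood of $L^{\delta,-}_0=\inf\{r:\int_r^\infty\ro<e^{-\delta}\}$ (e.g.\ if $\ro$ has an interior gap), so that $r\mapsto\int_r^\infty\ro$ is flat at the critical level and $L^{N,\delta,-}_0$ need not converge to $L^{\delta,-}_0$. I would still sandwich, now taking for $E_\eta$ the interval between the two deterministic levels $a^\pm(\eta)$ with $\int_{a^\pm(\eta)}^\infty\ro=e^{-\delta}\mp\eta$, slightly enlarged on the right to absorb the gap between $a^+(\eta)$ and the smallest particle above it: the law of large numbers for $\frac1N\sum_iN^i_\delta\one\{B^i_0\ge a^\pm(\eta)\}$, together with the fact that a single family has size $o(N)$, forces $I^N_0\subset E_\eta$ eventually and a.s. Now $E_\eta$ does not shrink to a point, but its $\ro$-mass is $O(\eta)$: the two levels $a^\pm(\eta)$ enclose $\ro$-mass exactly $2\eta$, the gap between them contributes nothing, and the enlargement contributes at most $\eta\|\ro\|_\infty$; hence $\frac1N\sum_iN^i_{k\delta}\one\{B^i_0\in E_\eta\}\to e^{k\delta}\int_{E_\eta}\ro=O(\eta)$, and $\eta\downarrow0$ closes the $\ell=0$ term. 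This completes the inductive step, hence \eqref{llnlt1}. The whole scheme is deliberately arranged so that one never needs the cutting points to converge, only that the symmetric-difference regions $I^N_\ell$ carry vanishing mass --- the possible failure of convergence of the discrete cut where $\ro$ degenerates being the one real obstacle.
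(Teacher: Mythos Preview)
Your approach is correct but takes a genuinely different route from the paper's. You bound $\bigl|\prod_\ell a_\ell-\prod_\ell b_\ell\bigr|\le\sum_\ell|a_\ell-b_\ell|$ and then show that each random interval $I^N_\ell$ between $L^{N,\delta,\pm}_{\ell\delta}$ and $L^{\delta,\pm}_{\ell\delta}$ carries vanishing empirical mass, which in effect forces you to localize the random cutting points near the deterministic ones and to treat separately the possible degeneracy of $\ro$ at level $\ell=0$ for the lower barrier. The paper instead telescopes $A^N_{k\delta}-A_{k\delta}=A^N_{(k-1)\delta}\bigl(\one^N_k-\one_k\bigr)+\bigl(A^N_{(k-1)\delta}-A_{(k-1)\delta}\bigr)\one_k$ and exploits the fact that, for fixed $N$, all the summands $\one\{B^{i,j}_{k\delta}\ge L^{N,\delta,\pm}_{k\delta}\}-\one\{B^{i,j}_{k\delta}\ge L^{\delta,\pm}_{k\delta}\}$ have the \emph{same sign}, so the absolute value can be pulled outside the double sum. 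The two resulting sums then both converge to $1$: one is exactly $1$ (or $1-O(1/N)$) by the very definition of the empirical cutting point, the other by Proposition~\ref{propo12} and the definition of $L^{\delta,\pm}_{k\delta}$. Their difference therefore vanishes without ever needing to locate $L^{N,\delta,\pm}_{k\delta}$ or to worry about flat regions of $\ro$. Your argument is more explicit about the underlying mechanism (the cuts essentially converge) and makes clear why the $\ell=0$ case is the only delicate one; the paper's same-sign device is shorter and sidesteps that case entirely. One small technicality both arguments elide: the induction hypothesis is a sum up to $N^i_{(k-1)\delta}$, while its use (in your replacement step and in the paper's bound of \eqref{llnlt4}) requires the sum up to $N^i_{k\delta}$; this is harmless because $A_{(k-1)\delta}$ depends only on the trajectory up to $(k-1)\delta$ and the offspring counts over $[(k-1)\delta,k\delta]$ are independent with all moments finite, but it is worth stating.
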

\begin{proof}
  Since at time zero the families have only one element, for the lower barrier at $k=0$ the left hand side of \eqref{llnlt1} reads
  \begin{align}
    &\hskip-3mm\frac1N\sum_{i=1}^N
\Bigl|
\one\big\{ B^{i,1}_{0}\ge L^{N,\delta,-}_{0}\big\}
- \one\big\{ B^{i,1}_{0}\ge L^{\delta,-}_{0}\big\}
\Bigr|.\nn
\end{align}
Recalling that all the trajectories $B^{i,j}_{[0,\delta]}$ start at the same point $B^{i,1}_0$, we can bound the above expression by 
\begin{align}
 &\frac1N\sum_{i=1}^N\sum_{j=1}^{N_\delta^i}
\Bigl|\one\big\{ B^{i,j}_{0}\ge L^{N,\delta,-}_{0}\big\}
- \one\big\{ B^{i,j}_{0}\ge L^{\delta,-}_{0}\big\}\Bigr| \label{llnlt2} \\
&\quad= \Bigl|\frac1N\sum_{i=1}^N\sum_{j=1}^{N_\delta^i}
\one\big\{ B^{i,j}_{0}\ge L^{N,\delta,-}_{0}\big\}
- \frac1N\sum_{i=1}^N\sum_{j=1}^{N_\delta^i}\one\big\{ B^{i,j}_{0}\ge L^{\delta,-}_{0}\big\}
\Bigr|
\mathop{\longrightarrow}_{N\to\infty}0,  \hbox{ {\rm a.s.} and $L^1$}\label{llnlt6}.
  \end{align}
The identity holds because the differences of the indicator functions in \eqref{llnlt2} have the sign of $L^{N,\delta,-}_{0}-L^{\delta,-}_{0}$ for all $i,j$. The limit \eqref{llnlt6} holds because (a) by \eqref{4-9} the first term in \eqref{llnlt6} converges to~1 and (b) the limit of the second term is also 1 by definition \eqref{lcp} of $L^{\delta,-}_0$ and Proposition~\ref{propo12}.

For the upper barrier at  $k=1$, put again the sums between the absolute values to get that in this case the left hand side of \ref{llnlt1} reads
\begin{align}
\Bigl|
\frac1N\sum_{i=1}^N\sum_{j=1}^{N_{\delta}^i}   \one\big\{ B^{i,j}_{\delta}\ge L^{N,\delta,+}_{\delta}\big\}
- \frac1N\sum_{i=1}^N\sum_{j=1}^{N_{\delta}^i}  \one\big\{ B^{i,j}_{\delta}\ge L^{\delta,+}_{\delta}\big\}
\Bigr| \mathop{\longrightarrow}_{N\to\infty}0 ,\quad a.s. \hbox{ and in }L^1,\label{llnlt7}
\end{align}
because the first term is 1 by definition of $L^{N,\delta,+}_\delta$ and the second one converges to 1 by the definition of $L^{\delta,+}$ and Proposition \ref{propo12}.

For the induction step assume \eqref{llnlt1} holds for $\ell=k-1$ and write the left hand side of \eqref{llnlt1} for the upper barrier at $\ell=k$ as 
\begin{align}
   & \frac1N\sum_{i=1}^N\sum_{j=1}^{N_{k\delta}^i}  
\Bigl| A_{(k-1)\delta} ^{N,\delta,+} \one\big\{ B^{i,j}_{k\delta}\ge L^{N,\delta,+}_{k\delta}\big\}- A_{(k-1)\delta} ^{\delta,+}\one\big\{ B^{i,j}_{k\delta}\ge L^{\delta,+}_{k\delta}\big\}\Bigr|\nn\\
&\qquad \le \Bigl|\frac1N\sum_{i=1}^N\sum_{j=1}^{N_{k\delta}^i}  
 A_{(k-1)\delta} ^{N,\delta,+} \one\big\{ B^{i,j}_{k\delta}\ge L^{N,\delta,+}_{k\delta}\big\}- \frac1N\sum_{i=1}^N\sum_{j=1}^{N_{k\delta}^i}  A_{(k-1)\delta} ^{N,\delta,+}\one\big\{ B^{i,j}_{k\delta}\ge L^{\delta,+}_{k\delta}\big\}\Bigr|  \label{llnlt3}\\
&\qquad\qquad+\frac1N\sum_{i=1}^N\sum_{j=1}^{N_{k\delta}^i}  
\Bigl| A_{(k-1)\delta} ^{N,\delta,+} \one\big\{ B^{i,j}_{k\delta}\ge L^{\delta,+}_{k\delta}\big\}- A_{(k-1)\delta} ^{\delta,+}\one\big\{ B^{i,j}_{k\delta}\ge L^{\delta,+}_{k\delta}\big\}\Bigr|, \label{llnlt4}
\end{align}
where the inequality is obtained by summing and subtracting the same expression and then taking the modulus out of the sums in \eqref{llnlt3} as all the indicator function differences have the same sign, as before.  
By dominated convergence and the induction hypothesis, the expression in \eqref{llnlt4} converges to zero as $N\to\infty$. In turn, this implies that the second term in  \eqref{llnlt3} has the same limit as the second term in \eqref{llnlt5} below. Hence we only need to show that the limits of the following expressions vanish.
\begin{align}
\label{llnlt5}
  \Bigl|
\frac1N\sum_{i=1}^N\sum_{j=1}^{N_{k\delta}^i}   A_{(k-1)\delta} ^{N,\delta,+} \one\big\{ B^{i,j}_{k\delta}\ge L^{N,\delta,+}_{k\delta}\big\}
- \frac1N\sum_{i=1}^N\sum_{j=1}^{N_{k\delta}^i}  A_{(k-1)\delta} ^{\delta,+}\one\big\{ B^{i,j}_{k\delta}\ge L^{\delta,+}_{k\delta}\big\}
\Bigr| 
\end{align}
(informally, we have taken the modulus outside the sums). 
Those limits follow with the arguments used to show \eqref{llnlt7}, indeed the first term is 1 and the second one converges to~1. The same argument shows that the limit \eqref{llnlt1} for the lower barrier at $\ell=k$ is the same as the limit of the expression
\begin{align}
\label{llnlt8}
  \Bigl|
\frac1N\sum_{i=1}^N\sum_{j=1}^{N_{k\delta}^i}   A_{(k-1)\delta} ^{N,\delta,-} \one\big\{ B^{i,j}_{\delta}\ge L^{N,\delta,-}_{(k-1)\delta}\big\}
- \frac1N\sum_{i=1}^N\sum_{j=1}^{N_{k\delta}^i}  A_{(k-1)\delta} ^{\delta,-}\one\big\{ B^{i,j}_{(k-1)\delta}\ge L^{\delta,-}_{k\delta}\big\}
\Bigr| ,
\end{align}
which goes to 0 by the same argument as  \eqref{llnlt6}.
\end{proof}

\paragraph{Proof of Theorem \ref{thm4}} Recalling \eqref{u45} we have
  \begin{align}
    \pi^{N,\delta,+}_{k\delta}\varphi &= \frac1N\sum_{i=1}^N\sum_{j=1}^{N_{k\delta}^i} \varphi(B^{i,j}_{k\delta}) \one\{B^{i,j}_{\ell\delta}\ge L^{N,\delta,+}_{\ell\delta}: 1\le\ell\le k\}\nn\\
    \pi^{N,\delta,-}_{k\delta}\varphi &= \frac1N\sum_{i=1}^N\sum_{j=1}^{N_{k\delta}^i} \varphi(B^{i,j}_{k\delta}) \one\{B^{i,j}_{\ell\delta}\ge L^{N,\delta,-}_{\ell\delta}: 0\le\ell\le k-1\}.\nn
  \end{align}
We want to apply Proposition \ref{llnmu} but do not have an explicit expression for $E \pi^{N,\delta,\pm}_{k\delta}$ because of the random boundaries in the right hand side. But if we use instead the deterministic boundaries $L^{\delta,\pm}_{\ell\delta}$ by defining
\begin{align}
g^{+}_\varphi(B_{[0,k\delta]}) &:=  \varphi(B_{k\delta}) \one\{B_{\ell\delta}\ge L^{\delta,+}_{\ell\delta}: 1\le\ell\le k\}  \label{gbt+}\\
g^{-}_\varphi(B_{[0,k\delta]}) &:=  \varphi(B_{k\delta}) \one\{B_{\ell\delta}\ge L^{\delta,-}_{\ell\delta}: 0\le\ell\le k-1\}\label{gbt-}
\end{align}
By  \eqref{llnmu} we have
\begin{align}
  \lim_{N\to\infty} \mu^N_{k\delta}g^{\pm}_\varphi = e^{k\delta} Eg^{\pm}_\varphi(B_{[0,k\delta]}) =  \int \varphi(r) S^{\delta,\pm}_{k\delta}\ro(r)\,dr , \quad\hbox{by \eqref{ivs1}}.
\end{align}
To conclude it suffices to show that  $ \pi^{N,\delta,\pm}_{k\delta}\varphi- \mu^N_{k\delta}g^{\pm}_\varphi$, converges to 0. Write
\begin{align}
&\big|\pi^{N,\delta,-}_{k\delta}\varphi -\mu^N_{k\delta}g^-_\varphi\big|= \Bigl|\frac1N\sum_{i=1}^N\sum_{j=1}^{N_{k\delta}^i} \varphi(B^{i,j}_{k\delta}) \Bigl(
\prod_{\ell=0}^{k-1} \one\big\{ B^{i,j}_{\ell\delta}\ge L^{N,\delta,-}_{\ell\delta}\big\}
-\prod_{\ell=0}^{k-1} \one\big\{ B^{i,j}_{\ell\delta}\ge L^{\delta,-}_{\ell\delta}\big\}\Bigr)\Bigr|\nn\\
&\quad\le \|\varphi\|_\infty \frac1N\sum_{i=1}^N\sum_{j=1}^{N_{k\delta}^i}\Bigl|
\prod_{\ell=0}^{k-1} \one\big\{ B^{i,j}_{\ell\delta}\ge L^{N,\delta,-}_{\ell\delta}\big\}
-\prod_{\ell=0}^{k-1} \one\big\{ B^{i,j}_{\ell\delta}\ge L^{\delta,-}_{\ell\delta}\big\}\Bigr|
\mathop{\longrightarrow}_{N\to\infty}0,  \hbox{ {\rm a.s.} and $L^1$},\nn
\end{align}
by Proposition \ref{llnlt}. The same argument works for the upper barrier. \qed

\section{Existence of the limit function $\psi$}
\label{s5}

In this Section we prove Theorem \ref{thm5}. Start with the following Proposition whose proof is similar to the one given in Chapters 4, 5 and 6 of \cite{MR3282863}.

\begin{proposition}\label{0hh}
The following properties hold for every $u,v\in L^1(\mathbb R,\mathbb R^+)$  and $t,m>0$.
\begin{itemize}
\item[(a)]
If $u\cle v$, then $C_m u\cle v$ for every $m>0$.
\item[(b)]
If $u\le v$ pointwise, then $G_tu\le G_tv$, $G_tu\cle G_tv$.
\item[(c)]
$C_m$ and $G_t$ preserve the order: if $u\cle v$, then $C_mu\cle C_mv$ and $G_tu\cle G_tv$.
\item[(d)]
$\norm{C_m u-C_m v}_1\le \norm{u-v}_1$.
\item[(e)]
$\norm{G_t u-G_t v}_1\le  \norm{u-v}_1$.
\item[(f)]
$\abs{\frac{\partial}{\partial r}G_t u(r)}\le \frac{c\norm{u}_\infty}{\sqrt t}$ for every $r\in\R$.
\end{itemize}
\end{proposition}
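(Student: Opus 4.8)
\textbf{Proof plan for Proposition \ref{0hh}.}

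The plan is to verify the six properties essentially by direct computation, treating them in a logical order so that later items can lean on earlier ones. I would first record the two elementary facts underlying everything: that the Gaussian kernel $G_t$ is a positivity-preserving, mass-preserving linear operator, i.e.\ $u\ge 0$ implies $G_tu\ge 0$ and $\int G_tu = \int u$; and that for a nonnegative integrable $u$ the ``tail integral'' $a\mapsto \int_a^\infty u(r)\,dr$ is continuous and nonincreasing, so the cut operator $C_m$ simply truncates $u$ to zero on the half-line $[a^*,\infty)$ where the tail first drops below $m$ (with $a^* = -\infty$, i.e.\ $C_mu=u$, when $\|u\|_1\le m$). With these in hand the items go as follows.

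For (b), pointwise monotonicity $G_tu\le G_tv$ is immediate from the positivity of the Gaussian weight applied to $v-u\ge 0$; the order statement $G_tu\cle G_tv$ then follows by integrating over $[a,\infty)$. For (a): if $u\cle v$ then for every $a$, $\int_a^\infty C_mu \le \int_a^\infty u\le \int_a^\infty v$, using only that $C_mu\le u$ pointwise; hence $C_mu\cle v$. For (e), write $G_t(u-v)$ and use $\|G_t w\|_1\le \int G_t|w| = \|w\|_1$ by Fubini and positivity. For (f), differentiate under the integral sign: $\frac{\partial}{\partial r}G_tu(r)=\int \partial_r\big(\tfrac1{\sqrt{2\pi t}}e^{-(a-r)^2/2t}\big)u(a)\,da$ is bounded in absolute value by $\|u\|_\infty\int |\partial_r(\text{Gaussian})| = \|u\|_\infty \cdot c/\sqrt t$, since the spatial $L^1$ norm of the derivative of the heat kernel scales like $t^{-1/2}$.

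The two items requiring a little care are (c) and (d), both about $C_m$, and I expect (c)'s statement ``$u\cle v\Rightarrow C_mu\cle C_mv$'' to be the main obstacle, since $C_m$ is genuinely nonlinear and the cut locations $a^*_u\le a^*_v$ (monotone in the tail mass) need to be compared. I would argue by cases on the position of a test point $a$ relative to $a^*_u$ and $a^*_v$: for $a\ge a^*_v$ both cut tails vanish; for $a^*_u\le a < a^*_v$ one has $\int_a^\infty C_mu=0\le\int_a^\infty C_mv$; and for $a< a^*_u$ one has $\int_a^\infty C_mu = \int_a^\infty u - m\le \int_a^\infty v - m = \int_a^\infty C_mv$ (each cut tail has mass exactly $m$ when it is nonempty, using continuity of the tail integral, which is where $\|u\|_\infty<\infty$ is convenient though not strictly needed). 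For (d), the cleanest route is to use the representation $C_mu(r)=u(r)\mathbf 1\{r< a^*_u\}$, note that $\|C_mu-C_mv\|_1$ splits according to whether we are left of $\min(a^*_u,a^*_v)$, between the two cut points, or to the right, and on each piece bound the integrand by $|u-v|$ plus a term that telescopes against the equal-mass property of the cuts; alternatively, invoke the general fact (as in \cite{MR3282863}) that $C_m$ is an $L^1$-contraction because it is the ``keep the $m$ rightmost mass'' operation, which is a composition of monotone rearrangement-type maps. Since the paper explicitly says the proof is similar to \cite{MR3282863}, I would keep these verifications terse and cite that reference for the bookkeeping.
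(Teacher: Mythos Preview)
You have misread the definition of the cut operator. From \eqref{cut1}, $C_m u(a)=u(a)\,\mathbf 1\{\int_a^\infty u<m\}$, so $C_m u$ \emph{keeps} $u$ on the set where the right tail is already below $m$, i.e.\ on $[q_m(u),\infty)$ with $q_m(u)=\inf\{a:\int_a^\infty u<m\}$, and sets it to zero to the \emph{left}; its total mass is $\|u\|_1\wedge m$. Your description (``truncates $u$ to zero on $[a^*,\infty)$'', and later ``$C_mu(r)=u(r)\mathbf 1\{r<a^*_u\}$'') is exactly the opposite operator, the one that removes the rightmost mass~$m$. Consequently your case analysis for (c) is wrong as written: for $a\ge q_m(v)$ the tails $\int_a^\infty C_mu$ and $\int_a^\infty C_mv$ equal $\int_a^\infty u$ and $\int_a^\infty v$ (they do not vanish), while for $a\le q_m(u)$ one has $\int_a^\infty C_mu=m$, not $\int_a^\infty u-m$. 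Your argument for (d) inherits the same reversal.

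Once the direction is fixed, the structure you propose is essentially the paper's: a three-region case split on $a$ relative to $q_m(u)$ and $q_m(v)$ for (c), and for (d) a direct decomposition using the equal-mass identity $\int_{q_m(u)}^\infty u=\int_{q_m(v)}^\infty v=m$ (this is exactly how the paper gets the telescoping in \eqref{0001}--\eqref{0003}). One further gap: you do not argue that $G_t$ preserves $\preccurlyeq$ in (c), which is not the same as the pointwise statement in (b); the cleanest way is to write $\int_a^\infty G_tu=\int u(r')\,P(r'+\sqrt t\,Z\ge a)\,dr'$ and use that $r'\mapsto P(r'+\sqrt t\,Z\ge a)$ is nonnegative, increasing, and vanishes at $-\infty$, so a layer-cake argument against $u\preccurlyeq v$ gives the inequality. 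Items (a), (b), (e), (f) are fine as you have them.
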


 \begin{proof}[Proof of Proposition \ref{0hh}]
 Items \textit{(a)}, \textit{(b)},  \textit{(e)} and  \textit{(f)} are simple and we omit their proofs.

\noindent{\it Proof of   \textit{(c)}}.  We start with $C_m$  and assume $m<\norm{u}_1\wedge \norm{v}_1$ as the other case is trivial.
 For $a\in\R$, we have to prove that $\int_a^{\infty}C_mu\le \int_a^{\infty}C_mv$.
 Denote the cutting points by
 \begin{align}\label{0sd}
 q_m(u)\defi \inf\Bigl\{a\in \mathbb R:\int_a^{\infty}u<m\Bigr\}
 \end{align}

 We suppose $q_m(u)\wedge q_m(v)<a<q_m(u)\vee q_m(v)$ as the other case is trivial.
 If $q_m(v)L_u<a<q_m(v)$, we have
 \begin{align*}
 \int_a^{\infty}C_mu 
 =\int_{a}^\infty u\le \int_{q_m(v)}^\infty u
 =m
 =\int_{q_m(v)}^\infty v
 =\int_{a}^\infty C_m v;
 \end{align*}
 if $q_m(v)<a<q_m(u)$,
 \begin{align*}
 \int_a^\infty C_mu = \int_{q_m(u)}^\infty u \le \int_{q_m(u)}^\infty v\le \int_{a}^\infty v
 =\int_a^\infty C_mv.
 \end{align*}


 Suppose now $\|u\|_1<\|v\|_1$ and let
 $m\defi \|v\|_1-\|u\|_1$.
 It is easy to see that $u\cle C_m v$.
 As $\|u\|_1=\|C_m v\|_1$, we can apply the previous case to get
 $G_tu\cle G_t C_m v$.
 We conclude by observing that, because of item (b) and the point-wise dominance $C_m v\le v$, we have $G_t C_m v\cle G_t v$.

\noindent{\it Proof of \textit{(d)}}.  
 We assume $m<\|u\|_1\wedge \|v\|_1$ as the other case is trivial.
 Define $q_m(u)$ and $q_m(v)$ as in \eqref{0sd} and suppose $q_m(u)\ge q_m(v)$ without loss of generality.
 Then
 \begin{equation}\label{0001}
 \|C_m u-C_m v\|_1=
 \norm{u\mathbf 1_{x\ge q_m(u)}-v\mathbf 1_{x\ge q_m(v)}}_1=\int_{q_m(v)}^{q_m(u)} v-
 \int_{-\infty}^{q_m(u)} |u-v|+  \|u-v\|_1.
 \end{equation}
 Also
 \begin{align}\label{0003}
 \int_{q_m(v)}^{q_m(u)} v= 
   \int_{-\infty}^{q_m(v)} v-\int_{-\infty}^{q_m(u)} u+\int_{q_m(v)}^{q_m(u)} v
 =\int_{-\infty}^{q_m(u)} (v-u)\le \int_{-\infty}^{q_m(u)} |u-v|.
 \end{align}
 Item \textit{(d)} follows from \eqref{0001} and \eqref{0003}.  \end{proof}
\begin{proposition}\label{thm031}
For every $\delta>0$ and natural number $k\ge 0$, we have
\begin{align}
\label{0eq1}
S_{k\delta}^{\delta,-}u & \cle S_{k\delta}^{\delta,+}u \\
\label{0eq2}
S_{k\delta}^{\delta,-}u & \cle S_{k\delta}^{\delta/2,-}u \\
\label{0eq3}
S_{k\delta}^{\delta,+}u & \cge S_{k\delta}^{\delta/2,+}u.
\end{align}
Furthermore, there exists a constant $c>0$ such that
\begin{align}
\label{0eq4}
\norm{S_{k\delta}^{\delta,+}u-S_{k\delta}^{\delta,-}u}_{1}\le c\delta.
\end{align}
\end{proposition}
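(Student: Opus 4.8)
The plan is to prove \eqref{0eq1}--\eqref{0eq3} by induction on $k$, using the commutation/monotonicity properties collected in Proposition \ref{0hh}, and then to derive the quantitative estimate \eqref{0eq4} as a consequence of the one-step comparison together with the $L^1$-contraction properties (d), (e). The base case $k=0$ is trivial since all four barriers equal $u$. For the inductive step of \eqref{0eq1}, I would write $S^{\delta,+}_{k\delta}u = C_1 e^\delta G_\delta\, S^{\delta,+}_{(k-1)\delta}u$ and $S^{\delta,-}_{k\delta}u = e^\delta G_\delta C_{e^{-\delta}}\, S^{\delta,-}_{(k-1)\delta}u$. Starting from the inductive hypothesis $S^{\delta,-}_{(k-1)\delta}u \cle S^{\delta,+}_{(k-1)\delta}u$, apply $C_{e^{-\delta}}$ (order-preserving, by (c)), then use that cutting to total mass $e^{-\delta}$ only decreases mass, so $C_{e^{-\delta}} S^{\delta,+}_{(k-1)\delta}u \cle S^{\delta,+}_{(k-1)\delta}u$; apply $e^\delta G_\delta$ (order-preserving, by (c)) to get $S^{\delta,-}_{k\delta}u \cle e^\delta G_\delta S^{\delta,+}_{(k-1)\delta}u$; finally apply $C_1$, using (a) together with the fact that $\|e^\delta G_\delta S^{\delta,+}_{(k-1)\delta}u\|_1 = 1$, which forces $C_1 e^\delta G_\delta S^{\delta,+}_{(k-1)\delta}u = e^\delta G_\delta S^{\delta,+}_{(k-1)\delta}u \cge S^{\delta,-}_{k\delta}u$. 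This closes the induction.

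For \eqref{0eq2} and \eqref{0eq3} the key observation is a \emph{one-step refinement inequality}: I claim $e^\delta G_\delta C_{e^{-\delta}} w \cle (e^{\delta/2} G_{\delta/2} C_{e^{-\delta/2}})^2 w$ and dually $C_1 e^\delta G_\delta w \cge (C_1 e^{\delta/2} G_{\delta/2})^2 w$ for any $w$ with $\|w\|_1 \le 1$ (resp. $=1$). For the lower barrier, the point is that cutting in two stages of size $e^{-\delta/2}$ removes less mass ``too early'' than cutting once to $e^{-\delta}$; concretely, splitting $G_\delta = G_{\delta/2}G_{\delta/2}$ and inserting an extra cut $C_{e^{-\delta/2}}$ in the middle, one checks $e^{\delta/2}G_{\delta/2} C_{e^{-\delta/2}} e^{\delta/2} G_{\delta/2} C_{e^{-\delta}} w$ dominates $e^\delta G_\delta C_{e^{-\delta}}w$ because the inner cut acts on a function of mass $\ge e^{-\delta/2}\cdot e^{\delta/2}\cdot(\text{something})$—more precisely one uses that $C_{e^{-\delta/2}}$ applied after one diffuse-and-grow half-step to a mass-$e^{-\delta}$ function either does nothing or cuts from the left, and in either case domination is preserved by (a) and (c). Then one iterates: with the one-step refinement and order-preservation of the full composite operators one gets $S^{\delta,-}_{k\delta}u \cle S^{\delta/2,-}_{k\delta}u$ by an induction that peels off one $\delta$-block at a time. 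The upper barrier is symmetric, reversing all inequalities and replacing $C_{e^{-\delta}}$-before-diffusion by $C_1$-after-diffusion.

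For \eqref{0eq4}, the idea is that after a single $\delta$-step, the difference between ``cut then diffuse'' and ``diffuse then cut'' is $O(\delta)$ in $L^1$, and this does not accumulate because all operators are $L^1$-nonexpansive on mass-$1$ inputs. Quantitatively: the lower-barrier step $e^\delta G_\delta C_{e^{-\delta}}$ and the upper-barrier step $C_1 e^\delta G_\delta$ differ, when applied to the same mass-$1$ function $w$, by an amount controlled by (i) the mass $e^\delta-1 = O(\delta)$ that is cut off on one side versus the other, and (ii) the displacement of the cutting location, which moves by $O(\sqrt\delta)$ under $G_\delta$ but over a region where $\|w\|_\infty$ is bounded by the propagated $L^\infty$ bound $\|\rho\|_\infty e^\delta$, giving an $L^1$-contribution $O(\sqrt\delta)\cdot O(\sqrt\delta) = O(\delta)$ using (f)-type gradient control. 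One then compares $S^{\delta,+}_{k\delta}u$ and $S^{\delta,-}_{k\delta}u$ by a telescoping sum over the $k$ blocks: at block $\ell$, replace one lower-step by one upper-step, pay $O(\delta)$, and use that the remaining $k-\ell$ upper-steps and $\ell-1$ lower-steps are $L^1$-nonexpansive (Proposition \ref{0hh}(d),(e)); but since $k\delta = t$ is fixed, a naive telescope gives $k \cdot O(\delta) = O(t)$, not $O(\delta)$—so instead one uses the monotone sandwich from \eqref{0eq1}, namely $0 \le \int_a^\infty(S^{\delta,+}_{k\delta}u - S^{\delta,-}_{k\delta}u)$, and bounds the \emph{total} mass discrepancy directly: $\|S^{\delta,+}_{k\delta}u - S^{\delta,-}_{k\delta}u\|_1$ equals, by the ordering, $2\sup_a \int_a^\infty(S^{\delta,+}_{k\delta}u - S^{\delta,-}_{k\delta}u)$, and the extreme cutting locations $L^{\delta,+}_{k\delta}$ and $L^{\delta,-}_{k\delta}$ sit within an interval whose propagated mass gap is $O(\delta)$ because the lower barrier only ever ``owes'' the $e^\delta-1$ mass from the most recent block. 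I expect this last bookkeeping—showing the $O(\delta)$ does not accumulate in $k$—to be the main obstacle, and the way around it is precisely to exploit the one-step cut discrepancy being confined to the freshly-grown mass $e^\delta - 1$ at the left boundary rather than treating the barriers as a generic telescoping composition.
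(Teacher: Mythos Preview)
Your treatment of \eqref{0eq2} and \eqref{0eq3} matches the paper's: a one-step refinement inequality $H_\delta^{\pm}v$ versus $(H_{\delta/2}^{\pm})^2v$, followed by induction and order-preservation. For \eqref{0eq1} you take a different route than the paper, which deduces it from the stochastic domination (Proposition~\ref{domination}) together with the hydrodynamic limit (Theorem~\ref{thm4}); your direct deterministic induction is more self-contained, but contains a slip: $\|e^\delta G_\delta S^{\delta,+}_{(k-1)\delta}u\|_1=e^\delta$, not $1$, so $C_1$ is a genuine cut. The conclusion is still correct, since $\|S^{\delta,-}_{k\delta}u\|_1=1$ forces $\int_a^\infty S^{\delta,-}_{k\delta}u\le 1=\int_a^\infty C_1(\cdot)$ to the left of the cut point, and to its right the cut does nothing; this is the argument you should give in place of the erroneous equality.

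The real gap is in \eqref{0eq4}. Your fallback after abandoning the telescope rests on the assertion that, for $f\cle g$ with $\|f\|_1=\|g\|_1$, one has $\|g-f\|_1=2\sup_a\int_a^\infty(g-f)$. This is false: take $f=\tfrac12\mathbf 1_{[0,1]}+\tfrac12\mathbf 1_{[2,3]}$ and $g=\tfrac12\mathbf 1_{[1,2]}+\tfrac12\mathbf 1_{[3,4]}$; then $f\cle g$, the supremum equals $\tfrac12$, but $\|g-f\|_1=2$. So the ``mass gap confined to the last block'' heuristic is not backed by a valid inequality, and you have no mechanism preventing the $O(\delta)$ per-step error from accumulating to $O(t)$.

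The paper's device is to set up a recursion rather than a telescope. With $u_k:=e^\delta G_\delta (C_1 e^\delta G_\delta)^{k-1}u$ (mass $e^\delta$) and $v_k:=S^{\delta,-}_{k\delta}u$ (mass $1$), one has $u_k=e^\delta G_\delta C_1 u_{k-1}$ and $v_k=e^\delta G_\delta C_{e^{-\delta}} v_{k-1}$, so items (d),(e) of Proposition~\ref{0hh} give
\[
\|u_k-v_k\|_1\le e^\delta\|C_1u_{k-1}-C_{e^{-\delta}}u_{k-1}\|_1+e^\delta\|u_{k-1}-v_{k-1}\|_1,
\]
and the whole point is that the first term is $O(\delta^2)$, not $O(\delta)$; iterating $k=t/\delta$ times against the multiplicative $e^\delta$ then yields $\|u_k-v_k\|_1\le c\delta$, and a final $\|C_1u_k-u_k\|_1=e^\delta-1$ gives \eqref{0eq4}. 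What your proposal is missing is precisely this recursive bookkeeping and the identification of an $O(\delta^2)$ one-step discrepancy; without it the $O(\delta)$ conclusion cannot be reached.
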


 \begin{proof}[Proof of Proposition \ref{thm031}]
Inequality \eqref{0eq1} is a consequence of Theorem \ref{thm4} and Proposition \ref{domination}.

 To prove inequalities \eqref{0eq2} and \eqref{0eq3} we call $H_\delta^-\defi e^\delta G_\delta C_{e^{-\delta}}$ and $H^+_\delta\defi C_{1} e^\delta G_\delta$ and we  prove below that    \begin{align}\label{0011}
 H_{\delta}^-v & \cle \big(H_{\delta/2}^-\big)^2v
 \\
 \label{0012}
 H^+_{\delta}v&\cge \big(H^+_{\delta/2}\big)^{2}v
 \end{align} 
Now we deduce \eqref{0eq2} from \eqref{0011} by induction.  The case $k=1$ is \eqref{0011}. Assume the assertion holds for $k-1$ and
 write
 $\pare{H^-_{\delta}}^ku=H^-_{\delta}\pare{H^-_{\delta}}^{k-1}u$,
 call $v\defi\pare{H^-_{\delta}}^{k-1}u$ and use the case $k=1$ to infer $\pare{H^-_{\delta}}^ku\cle
 \big(H^-_{\delta/2}\big)^2\pare{H^-_{\delta}}^{k-1}u$.
By the inductive hypothesis and the fact that $\big(H^-_{\delta/2}\big)^2$ preserves the order conclude that $\pare{H^-_{\delta}}^ku\cle \big(H^-_{\delta/2}\big)^{2k}u$ which is \eqref{0eq2}. 
 The way to deduce \eqref{0eq3} from \eqref{0012} is similar.

 \noindent {\it {Proof of \eqref{0011} }}.  We first prove that for $a\in\mathbb R$
 \begin{align}\label{0007}
 \int_a^\infty e^{\delta/2} G_{\delta/2} C_{e^{-\delta}}v\le \int_a^\infty C_{e^{-\delta/2}}H_{\delta/2}^-v.
 \end{align} 
 Let   $q\defi q_{e^{-\delta/2}}(H_{\delta/2}^-v)$.  If $a\le q$, identity \eqref{0007} is satisfied because
 \begin{align}
 \int_a^\infty e^{\delta/2} G_{\delta/2} C_{e^{-\delta}}v
 \le \int_{-\infty}^\infty e^{\delta/2}G_{\delta/2} C_{e^{-\delta}}v
 =e^{-\delta/2}
 =
 \int_a^\infty C_{e^{-\delta/2}}H_{\delta/2}^-v.
 \end{align}
 If $a>q$, inequality \eqref{0007} becomes
 \begin{align}
 \int_a^\infty e^{\delta/2}G_{\delta/2} C_{e^{-\delta}}v\le
 \int_a^\infty H_{\delta/2}^-v,
 \end{align}
which follows from $C_{e^{-\delta}}v\cle C_{e^{-\delta/2}}v$ and  from (c) of Proposition \ref{0hh} applied to $e^{\delta/2}G_{\delta/2}$.
 
 From \eqref{0007} we the have that  $e^{\delta/2}G_{\delta/2} C_{e^{-\delta}}v\cle C_{e^{-\delta/2}}H^-_{\delta/2} v$ and since $e^{\delta}G_{\delta}=e^{\delta/2}G_{\delta/2}e^{\delta/2}G_{\delta/2}$ and $e^{\delta/2}G_{\delta/2}$ preserves the  order we get \eqref{0011}.

 
 \noindent {\it {Proof of \eqref{0012} }}. We have to prove that
 $C_{1}C_{e^{\delta/2}}e^{\delta/2}G_{\delta/2}w
 \cge 
 C_{1}e^{\delta/2}G_{\delta/2}C_{1}w$
 for $w\defi e^{\delta/2}G_{\delta/2}v$. From  (c) of Proposition \ref{0hh}
 this follows if we prove
 \begin{align}
 C_{e^{\delta/2}}e^{\delta/2}G_{\delta/2}w
 \cge 
 e^{\delta/2}G_{\delta/2}C_{1}w.
 \end{align}
 Denote
$ q\defi q_{e^{\delta/2}} \big(e^{\delta/2}G_{\delta/2}w\big)$. 
 If $a\le q$,
 \begin{align}
 \int_a^\infty C_{e^{\delta/2}}e^{\delta/2}G_{\delta/2}w
 =e^{\delta/2}
 =\int_{-\infty}^\infty e^{\delta/2}G_{\delta/2}C_{1}w
 \geq \int_{a}^\infty e^{\delta/2}G_{\delta/2}C_{-1}w.
 \end{align}
 For $a>q$, since $w\le C_{1}w$ point-wise from (b) of Proposition  \ref{0hh} we get
 \begin{align}
 \int_a^\infty e^{\delta/2}G_{\delta/2}w
 \geq \int_{a}^\infty e^{\delta/2}G_{\delta/2}C_{1}w;
 \end{align}

 We now prove \eqref{0eq4}.
 Let $k\defi t/\delta$ and define $u_k\defi e^\delta G_\delta \pare{H^+_\delta}^{k-1}u$ and $v_k\defi S_{t}^{\delta,-}u$.
 Using that $\norm{u_k}_1=e^\delta$ and assuming  $\delta$ small enough we get
 \begin{align}
 \nn
 \norm{S_{t}^{\delta,+}u-S_{t}^{\delta,-}u}_1 &=\norm{C_{1}u_k-v_k}_1
 \leq \norm{C_{1}u_k-u_k}_1+\norm{u_k-v_k}_1
 \\&= \pare{e^\delta-1}+\norm{u_k-v_k}_1\leq  2\delta+\norm{u_k-v_k}_1
 \label{0005}
 \end{align}

 By items (d) and (e) of Proposition \ref{0hh},
 \begin{align}
 \norm{u_k-v_k}_1
 &\nn\leq e^\delta\norm{C_{1}u_{k-1}-C_{e^{-\delta}}v_{k-1}}_1
 \\
 &\nn\le e^\delta\norm{C_{1}u_{k-1}-C_{e^{-\delta}}u_{k-1}}_1
 +e^\delta\norm{C_{e^{-\delta}}u_{k-1}-C_{e^{-\delta}}v_{k-1}}_1
 \\ \label{llkk}
 &\nn\le e^\delta\corch{\pare{e^{\delta}-1}-\pare{1-e^{-\delta}}}
 +e^\delta\norm{u_{k-1}-v_{k-1}}_1
 \\
 &\nn\le 3\delta^2+e^\delta\norm{u_{k-1}-v_{k-1}}_1.
 \end{align}
 Iterating and using that $\norm{u_1-v_1}_1\leq e^\delta\norm{u-C_{e^{-\delta}}u}_1=e^\delta\pare{1-e^{-\delta}}\le 2\delta$ we get
 \begin{align}
\nn \norm{v_k-u_k}_1&\le   3\delta^2 \sum_{j=0}^{k-2} e^{\delta j}+\norm{u_1-v_1}_1e^{\delta(k-1)}
 \\
 &\nn\le 3\delta^2 \frac {e^{\delta (k-1)}-1}{e^{\delta }-1}+2\delta e^t \le 3\delta \pare{e^t-1}+2\delta e^t
=c_1\delta.
 \end{align}
 We conclude by replacing in \eqref{0005}. \end{proof}

In order to prove Theorem \ref{thm5} we fix $T>0$, $u\in L^1(\mathbb R,\mathbb R_+)\cap L^\infty(\mathbb R,\mathbb R_+)$ and $t_0>0$. Call 
\begin{align}
  \mathcal T_{n}:=\{ k2^{-n}, k\in\mathbb N\}
\end{align}
and define the function $\rho_n:\mathbb R\times [t_0,T)\to\mathbb R_+$ as 
	\begin{equation}
	\nn
\rho_{n}(r,t):=S_t^{2^{-n},-}u,\qquad r\in\mathbb R, \quad t\in[t_0,T]\cap \mathcal T_{n}
	\end{equation}
and then define $\rho_{n}(r,t)$ for all $t\in [t_0,T]$ by linear interpolation.


%
%
Observe that $\rho_{n}$ are uniformly bounded since $\norm{u}_\infty e^T$ is a uniform bound for all $t$ and $n$. We will apply
Ascoli-Arzel\'a Theorem 
thus we need to prove equi-continuity: we will prove  space and time equi-continuity separately in Lemma \ref{0lemm1} and Lemma \ref{0lemm2} below.

We will use the following Lemma.
\begin{lemma}\label{a13} Given $\delta>0$ let $t,s\in \delta\mathbb N $ with $s<t$. For $u\in L^1(\mathbb R,\mathbb R_+)\cap L^\infty(\mathbb R,\mathbb R_+)$ let
	\begin{equation}
	\label{5.21a}
	v_{s,t}^{\delta}\defi S_t^{\delta,-}u-e^{t-s}G_{t-s}S_s^{\delta,-}u
	\end{equation}
Then 
\begin{align}\label{0eqq3}
\norm{v_{s,t}^\delta}_\infty\le \frac{2e^T\sqrt{t-s}}{\sqrt{2\pi}}.
\end{align}

\end{lemma}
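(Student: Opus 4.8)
\textbf{Proof plan for Lemma \ref{a13}.}

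The plan is to estimate $v^\delta_{s,t}$ by peeling off the last diffuse\&grow step of $S^{\delta,-}_t u$ and comparing it with $e^{t-s}G_{t-s}S^{\delta,-}_s u$. Write $k\defi s/\delta$ and $m\defi t/\delta$, so $S^{\delta,-}_t u = (e^\delta G_\delta C_{e^{-\delta}})^{m}u$ and $S^{\delta,-}_s u = (e^\delta G_\delta C_{e^{-\delta}})^{k}u$. Using the semigroup identity $e^{t-s}G_{t-s} = (e^\delta G_\delta)^{m-k}$ and the fact that $e^\delta G_\delta$ commutes with itself, both terms in \eqref{5.21a} are of the form $(e^\delta G_\delta)^{m-k}$ applied to, respectively, $(C_{e^{-\delta}}e^\delta G_\delta)^{m-k}S^{\delta,-}_s u$ (roughly speaking, reading the $m-k$ remaining cut-then-diffuse steps) and $S^{\delta,-}_s u$ itself. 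The point is that the only difference between these two inputs is the insertion of $m-k$ cut operators $C_{e^{-\delta}}$, and each such cut removes total mass at most $e^\delta-1\le c\delta$ from the left. So the difference of the two inputs is a function supported to the left, of $L^1$-norm $O((m-k)\delta)$; but that is not yet a good $L^\infty$ bound, and here is where the Gaussian smoothing must be used.

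The key mechanism is item (f) of Proposition \ref{0hh} together with the representation of $e^{t-s}G_{t-s}$ as a single Gaussian kernel of variance $t-s$. More precisely, I would write $v^\delta_{s,t}$ as $e^{t-s}G_{t-s}$ applied to a difference $w$ of two densities with $\|w\|_1 = O((t-s))$ (each of the $m-k = (t-s)/\delta$ cuts contributing $O(\delta)$), and then bound $\|e^{t-s}G_{t-s} w\|_\infty$ by $\|w\|_1$ times the sup of the Gaussian kernel, namely $\frac{1}{\sqrt{2\pi(t-s)}}$. Naively this gives $\|v^\delta_{s,t}\|_\infty = O\big((t-s)/\sqrt{t-s}\big) = O(\sqrt{t-s})$, which is exactly the order claimed. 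To get the clean constant $\frac{2e^T\sqrt{t-s}}{\sqrt{2\pi}}$ one has to be more careful: the cleanest route is to telescope over the $m-k$ intermediate times $s = s_0 < s_1 < \cdots < s_{m-k} = t$, $s_j = s+j\delta$, writing $v^\delta_{s,t}$ as a sum of terms $e^{t-s_j}G_{t-s_j}\big(\text{(one cut's worth of removed mass at time }s_j)\big)$, each of which has $L^\infty$ norm at most (mass removed, $\le e^T(e^\delta-1)$ after accounting for the $e^\delta$ prefactors) $\times \frac{1}{\sqrt{2\pi(t-s_j)}}$; summing the resulting $\sum_j (e^\delta-1)/\sqrt{2\pi(t-s_j)}$ and comparing with the integral $\int_s^t \frac{dr}{\sqrt{2\pi(t-r)}} = \frac{2\sqrt{t-s}}{\sqrt{2\pi}}$ gives the bound, with the $e^T$ absorbing all the exponential growth prefactors and the discretization error.

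The main obstacle is the bookkeeping in this telescoping: one must check that the "removed mass" at each step $s_j$ really is nonnegative and bounded by a controlled quantity, that after applying $e^{(j)\delta}G_\delta$ operators this removed mass is still a nonnegative $L^1$ function (so that its image under the remaining Gaussian is genuinely bounded by the $L^1$ norm times the kernel sup — here one uses that $G_t$ maps $L^1$ into $L^\infty$ with norm $\le (2\pi t)^{-1/2}$, and that both $C$ and $G$ preserve nonnegativity), and that the Riemann-sum-to-integral comparison $\sum_{j=1}^{m-k}\frac{\delta}{\sqrt{t-s_j}}\le \int_s^t\frac{dr}{\sqrt{t-r}}$ goes in the right direction (it does, since $1/\sqrt{t-r}$ is increasing in $r$, so the left-endpoint sum under-estimates — one should take the value at $s_{j-1}$ or argue the monotonicity carefully). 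Everything else is a routine application of Proposition \ref{0hh} and the explicit Gaussian kernel bound $\|G_{t-s}\|_{L^1\to L^\infty} = (2\pi(t-s))^{-1/2}$.
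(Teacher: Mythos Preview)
Your telescoping approach is correct and is exactly what the paper does: it writes $S^{\delta,-}_t u = e^{t-s}G_{t-s}S^{\delta,-}_s u - \sum_{j} e^{(h-j)\delta}G_{(h-j)\delta}w^\delta_{j\delta}$ where $w^\delta_{j\delta}\defi S^{\delta,-}_{j\delta}u - C_{e^{-\delta}}S^{\delta,-}_{j\delta}u$ is the mass removed at step $j$, then bounds each term via $\|G_{t'}w\|_\infty\le \|w\|_1/\sqrt{2\pi t'}$ and sums. Two minor sharpenings relative to your write-up: since $\|S^{\delta,-}_{j\delta}u\|_1=1$ exactly, the removed mass is $\|w^\delta_{j\delta}\|_1=1-e^{-\delta}\le\delta$ (no $e^T$ needed here; the $e^T$ enters only from the prefactors $e^{(h-j)\delta}\le e^T$), and the paper uses the elementary bound $\sum_{j=1}^{n}j^{-1/2}\le 2\sqrt{n}$ directly rather than an integral comparison, which sidesteps your monotonicity worry.
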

\begin{proof} We write
\begin{align}
\label{a5.22}
S_t^{\delta,-}u=e^{\delta}G_\delta S_{t-\delta}^{\delta,-}u-e^{\delta}G_\delta w_{t-\delta}^\delta,\qquad w_{t'}^\delta\defi S^{\delta,-}_{t'}u-C_{e^{-\delta}}S^{\delta,-}_{t'}u
\end{align}
 Call $m$ and $h$ the integers such that $s=m\delta$ and $t=h\delta$ and iterate \eqref{a5.22} to get
\begin{align*}
S_t^{\delta,-}u=e^{t-s}G_{t-s}S_{s}^{\delta,-}u-\sum_{k=m}^{h-1}e^{h-k}G_{\pare{h-k}\delta}w_{k\delta}^\delta.
\end{align*}
Since   $\norm{G_{t'}w}_\infty\le \norm{w}_1/\sqrt{2\pi t'}$ for any $t'>0$ and any  $w\in L^1$, using that  $\norm{w_{k\delta}^\delta}_1=1-e^{-\delta}\le \delta$ we get
\begin{eqnarray*}
\norm{v_{s,t}^\delta}_\infty &\le&
\sum_{k=m}^{h-1}\norm{e^{h-k}G_{(h-k)\delta}w_{k\delta}^\delta}_\infty
\le \sum_{k=m}^{h-1} \frac{e^{(h-k)\delta}\norm{w_{k\delta}^\delta}_1}{\sqrt{2\pi (h-k)\delta}}
\le \frac{e^T \sqrt{\delta}}{\sqrt{2\pi}}\sum_{k=m}^{h-1}\frac{1}{\sqrt{h-k}}
\\&\le& \frac{e^T \sqrt{\delta}}{\sqrt{2\pi}} 2\sqrt{h-m}=\frac{2e^T\sqrt{t-s}}{\sqrt{2\pi}}. \qedhere
\end{eqnarray*}
\end{proof}

\begin{lemma}[Space equi-continuity]\label{0lemm1}
For any $\varepsilon>0$, there exist  $n_0$ and $\zeta>0$ such that 
for any $n>n_0$, any  $t\in \corch{t_0,T}$,
and any $r,r'\in \R$ such that $\abs{r-r'}<\zeta$
	\begin{align}
	\label{a5.20}
\abs{\rho_n(r,t)-\rho_n(r',t)}<\varepsilon,\qquad \forall r,r'\in \R \text{ such that  }\abs{r-r'}<\zeta
	\end{align}

\end{lemma}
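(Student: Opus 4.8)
The plan is to exploit the smoothing property of the Gaussian kernel, quantified in item (f) of Proposition \ref{0hh}, combined with the fact that running the lower barrier for a short extra time $h$ changes it by something controllably small in $L^\infty$, which is exactly the content of Lemma \ref{a13}. First I would fix $\varepsilon>0$. Choose a small time increment $h>0$ (to be calibrated at the end) of the form $h=2^{-n_1}$ so that $h$ belongs to all grids $\cT_n$ with $n\ge n_1$. For a dyadic time $t\in\cT_n\cap[t_0,T]$ with $t-h\ge t_0$ (shrinking $t_0$ or taking $n$ large guarantees room), write, using Lemma \ref{a13} with $\delta=2^{-n}$, $s=t-h$,
\[
S_t^{2^{-n},-}u = e^{h}G_{h}\,S_{t-h}^{2^{-n},-}u + v^{2^{-n}}_{t-h,t},\qquad \norm{v^{2^{-n}}_{t-h,t}}_\infty\le \frac{2e^T\sqrt h}{\sqrt{2\pi}}.
\]
The term $v$ is uniformly small in sup-norm, hence contributes at most $\tfrac{4e^T\sqrt h}{\sqrt{2\pi}}$ to $\abs{\rho_n(r,t)-\rho_n(r',t)}$ regardless of $r,r'$. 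For the main term, apply item (f) of Proposition \ref{0hh}: since $\norm{S_{t-h}^{2^{-n},-}u}_\infty\le e^T\norm u_\infty$, we get $\abs{\partial_r (e^hG_h S_{t-h}^{2^{-n},-}u)(r)}\le \dfrac{c\,e^{T}e^{h}\norm u_\infty}{\sqrt h}$, so this term is Lipschitz in $r$ with a constant depending only on $T,\norm u_\infty,h$. Therefore $\abs{r-r'}<\zeta$ forces the main term to differ by at most $\dfrac{c\,e^{2T}\norm u_\infty}{\sqrt h}\,\zeta$.

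Putting the two pieces together, for dyadic times,
\[
\abs{\rho_n(r,t)-\rho_n(r',t)}\le \frac{4e^T\sqrt h}{\sqrt{2\pi}} + \frac{c\,e^{2T}\norm u_\infty}{\sqrt h}\,\zeta,
\]
uniformly in $n\ge n_1$ and in dyadic $t\in[t_0,T]$. Now I would first pick $h=2^{-n_1}$ small enough that the first summand is below $\varepsilon/2$, then pick $\zeta>0$ small enough (depending on that now-fixed $h$) that the second summand is below $\varepsilon/2$; set $n_0=n_1$. For non-dyadic $t\in[t_0,T]$, recall $\rho_n(\cdot,t)$ is defined by linear interpolation in $t$ between the two neighboring dyadic values $\rho_n(\cdot,t^-)$ and $\rho_n(\cdot,t^+)$ in $\cT_n$; a convex combination of two functions each satisfying the bound $\abs{\rho_n(r,\cdot)-\rho_n(r',\cdot)}<\varepsilon$ again satisfies it, so the estimate extends to all $t\in[t_0,T]$. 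This gives \eqref{a5.20}.

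The one technical point to be careful about — and the only real obstacle — is the requirement $t-h\ge t_0$ needed to invoke Lemma \ref{a13} with a genuine earlier dyadic time in the interval of definition $[t_0,T]$; for $t$ within $h$ of the left endpoint $t_0$ one cannot step back by $h$ inside $[t_0,T]$. This is handled by noting that the barriers $S^{\delta,-}_{s}u$ are defined for all $s\in\delta\N$, not just $s\ge t_0$, so Lemma \ref{a13} applies with $s=t-h$ as long as $t-h\ge 0$; since $t\ge t_0>0$ we may in any case shrink $h$ so that $h<t_0$, and then $s=t-h>0$ is legitimate and the argument goes through verbatim. The remaining ingredients — uniform boundedness $\norm{S^{\delta,-}_s u}_\infty\le e^T\norm u_\infty$ and the Gaussian gradient bound (f) — are already available, so no further work is needed.
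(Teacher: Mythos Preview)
Your proposal is correct and follows essentially the same approach as the paper: both split $S_t^{\delta,-}u$ via Lemma \ref{a13} as $e^{t-s}G_{t-s}S_s^{\delta,-}u + v^\delta_{s,t}$ with $s=t-h$ for a fixed dyadic $h<t_0$, bound the remainder $v$ in sup-norm by \eqref{0eqq3}, control the smooth part via the Lipschitz estimate from Proposition \ref{0hh}(f), and then pass to non-dyadic $t$ by linear interpolation. The only cosmetic differences are your explicit mention of the convexity argument for the interpolation step and the harmless extra factor $e^h\le e^T$ in your Lipschitz constant.
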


\begin{proof}
Fix $\varepsilon>0$. 
Choose $n_0$  so that $\delta_0:=2^{-n_0}<t_0$ and such that $\{4e^T\sqrt{\delta_0}\}/\sqrt{2\pi}<\varepsilon/2$
Choose also $\zeta>0$ such that $\zeta \{c e^T\norm{u}_\infty\}/\sqrt{\delta_0}<\varepsilon/2$
where $c>0$ is the constant of item (f) of Proposition \ref{0hh}. We take 
$\delta=2^{-n}$ with $n>n_0$ and suppose  that $t=h\delta\in [t_0,T]$ observing that it is enough to prove \eqref{a5.20} for $t$ of this form since $\rho_n$ is defined by linear interpolation. 
Let $s\defi t-\delta_0$ and  $v_{s,t}^{\delta}\defi S_t^{\delta,-}u-e^{t-s}G_{t-s}S_s^{\delta,-}u$, then
\begin{align*}
\abs{S_t^{\delta,-}u\pare{r}-S_t^{\delta,-}u\pare{r'}}=
\abs{e^{t-s}G_{t-s}S_s^{\delta,-}u(r)-e^{t-s}G_{t-s}S_s^{\delta,-}u(r')}
+
\abs{v_{s,t}^\delta(r)-v_{s,t}^\delta(r')}.
\end{align*}
By item (f) of Proposition \ref{0hh} and the bound $\norm{S_s^{\delta,-}u}_{\infty}\le e^s \norm{u}_\infty$,  for $r,r'\in\R$ as in \eqref{a5.20} and by the choice of $\zeta$ we have
\begin{align*}
\abs{e^{t-s}G_{t-s}S_s^{\delta,-}u(r)-e^{t-s}G_{t-s}S_s^{\delta,-}u(r')}\le
\frac{c e^T\abs{r-r'} \, \norm{u}_\infty}{\sqrt{\delta_0}}<\varepsilon/2.
\end{align*}
By the choice of $\delta_0$  and from   \eqref{0eqq3} we get
$\abs{v_{s,t}^\delta(r)-v_{s,t}^\delta(r')}\le 2\norm{v_{s,t}^\delta}_\infty<\varepsilon/2$ 
which concludes the proof. \end{proof}

\begin{lemma}[Time equi-continuity]\label{0lemm2}
For any $\varepsilon>0$, there exist  $n_0$ and $\zeta>0$ such that 
for any $n>n_0$ and any $r\in \R$,
\begin{align}
\label{a5.25}
\abs{\rho_n(r,t)-\rho_n(r,t')}<\varepsilon,\qquad \forall t,t'\in \corch{t_0,T}\text{ such that } \abs{t-t'}<\zeta
\end{align}
\end{lemma}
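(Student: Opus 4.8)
The plan is to reuse the mechanism of Lemma \ref{0lemm1}, now comparing $S_t^{\delta,-}u$ and $S_{t'}^{\delta,-}u$ at two nearby \emph{times}. First I would reduce, as there, to $t,t'\in\mathcal T_n$: since $\rho_n(r,\cdot)$ interpolates linearly its grid values, a general time increment is controlled by grid‑time increments over slightly longer spans, with the crude bound $\|\rho_n\|_\infty\le e^T\|u\|_\infty$ available to mop up oscillations on sub‑$\delta$ scales. So fix $\varepsilon>0$; I will choose later $n_0$ (hence $\delta_0:=2^{-n_0}<t_0$) and $\zeta>0$, and take $n>n_0$ and $t',t\in\delta\mathbb N\cap[t_0,T]$ with $t'<t$ and $t-t'<\zeta$, where $\delta=2^{-n}$.

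The key move is to ``borrow'' a fixed amount $\delta_0$ of Gaussian smoothing, independent of $n$, exactly as in the space estimate. Put $s:=t'-\delta_0$, which lies in $\delta\mathbb N$ (because $\delta_0\in\delta\mathbb N$) and is $\ge0$ (because $t'\ge t_0>\delta_0$), and write $w:=S_s^{\delta,-}u$, so $\|w\|_\infty\le e^{s}\|u\|_\infty\le e^T\|u\|_\infty$. Applying Lemma \ref{a13} to the pairs $(s,t)$ and $(s,t')$,
\begin{align*}
S_t^{\delta,-}u=e^{t-s}G_{t-s}w+v_{s,t}^\delta,\qquad S_{t'}^{\delta,-}u=e^{\delta_0}G_{\delta_0}w+v_{s,t'}^\delta,
\end{align*}
with $\|v_{s,t'}^\delta\|_\infty\le \tfrac{2e^T\sqrt{\delta_0}}{\sqrt{2\pi}}$ and $\|v_{s,t}^\delta\|_\infty\le \tfrac{2e^T\sqrt{\delta_0+(t-t')}}{\sqrt{2\pi}}\le \tfrac{2e^T\sqrt{2\delta_0}}{\sqrt{2\pi}}$ once $\zeta\le\delta_0$. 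Using $e^{t-s}G_{t-s}=e^{t-t'}G_{t-t'}\,e^{\delta_0}G_{\delta_0}$ and setting $g:=G_{\delta_0}w$, subtraction yields
\begin{align*}
S_t^{\delta,-}u-S_{t'}^{\delta,-}u=e^{\delta_0}\big(e^{t-t'}G_{t-t'}-I\big)g+v_{s,t}^\delta-v_{s,t'}^\delta.
\end{align*}

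I then estimate the three pieces in $L^\infty$. The two remainder terms are $O(\sqrt{\delta_0})$ uniformly in $n$ (and in $t,t'$), hence $<\varepsilon/4$ once $n_0$ is large enough — a choice involving neither $n$ nor $\zeta$. For the main piece, $g=G_{\delta_0}w$ is smooth: item (f) of Proposition \ref{0hh} gives $\|g'\|_\infty\le c\,e^T\|u\|_\infty/\sqrt{\delta_0}$, and $\|g\|_\infty\le e^T\|u\|_\infty$. Splitting $(e^{t-t'}G_{t-t'}-I)g=(e^{t-t'}-1)G_{t-t'}g+(G_{t-t'}g-g)$ and using the elementary Gaussian bound $|G_\tau g(r)-g(r)|\le\|g'\|_\infty\sqrt{2\tau/\pi}$ (integrate the kernel against $|g(r-y)-g(r)|\le\|g'\|_\infty|y|$), this piece is bounded by $e^{\delta_0}\big[(e^{\zeta}-1)e^T\|u\|_\infty+c\,e^T\|u\|_\infty\sqrt{2\zeta/\pi}/\sqrt{\delta_0}\big]$, which tends to $0$ as $\zeta\downarrow0$ with $\delta_0$ fixed. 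Thus, after fixing $n_0$ I choose $\zeta<\delta_0$ small enough to make this $<\varepsilon/2$, and \eqref{a5.25} follows for grid times; the interpolation step from the first paragraph then closes the argument, after a short case split on whether $\delta$ is below or above a fixed small threshold (when $\delta$ is not small, $t,t'$ lie in one or two adjacent mesh intervals and the increment is at most $(|t-t'|/\delta)\,2e^T\|u\|_\infty$, which is small because $|t-t'|/\delta$ is).

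The main obstacle — and the reason the naive approach fails — is that $S_{\cdot}^{\delta,-}u$ has no modulus of continuity uniform in $n$: its last constitutive operator $e^\delta G_\delta$ supplies only a gradient bound of order $\delta^{-1/2}\to\infty$, so one cannot simply differentiate. The remedy, as in Lemma \ref{0lemm1}, is to peel off a \emph{fixed} smoothing window $\delta_0$ — legitimate precisely because $\delta_0\in\delta\mathbb N$ — at the price of an $O(\sqrt{\delta_0})$ error handled by Lemma \ref{a13}, after which item (f) applies harmlessly at the fixed scale $\delta_0$. I expect the only genuinely fussy point to be the bookkeeping in the interpolation reduction, which is routine.
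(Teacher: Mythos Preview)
Your argument is correct, but it is genuinely different from the paper's. The paper applies Lemma~\ref{a13} \emph{once}, with the smaller grid time $t$ as base point, obtaining
\[
S_{t'}^{\delta,-}u-S_t^{\delta,-}u=v_{t,t'}^{\delta}+\big(e^{t'-t}G_{t'-t}-I\big)S_t^{\delta,-}u,
\]
and then controls $(G_{t'-t}-I)S_t^{\delta,-}u$ by splitting the Gaussian integral into $|r'-r|<\zeta'$ and $|r'-r|\ge\zeta'$ and invoking the \emph{space} equi-continuity already established in Lemma~\ref{0lemm1} on the near part. In other words, the paper bootstraps time equi-continuity from space equi-continuity. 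You instead apply Lemma~\ref{a13} \emph{twice}, at a base point $s=t'-\delta_0$ sitting $\delta_0$ below both times, so that both $S_t^{\delta,-}u$ and $S_{t'}^{\delta,-}u$ are expressed through the same smoothed function $g=G_{\delta_0}S_s^{\delta,-}u$; the Lipschitz bound on $g$ from item~(f) of Proposition~\ref{0hh} then handles $(G_{t-t'}-I)g$ directly, without appeal to Lemma~\ref{0lemm1}.

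What each buys: the paper's route is shorter and more economical, since Lemma~\ref{0lemm1} is already in hand. Your route is more self-contained and structurally parallel to the proof of Lemma~\ref{0lemm1} itself (both ``borrow $\delta_0$ of smoothing''), at the cost of two remainder terms and a slightly more careful ordering of the choices $n_0\to\zeta$. Your case split in the interpolation step is also sound once the thresholds are chosen in the right order (first $n_0$, then the grid-time $\zeta$, then shrink $\zeta$ further if needed so that the Lipschitz bound $(|t-t'|/\delta)\,2e^T\|u\|_\infty$ covers the finitely many scales $\delta$ above the threshold); as you say, this is routine bookkeeping.
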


\begin{proof}
Fix $\varepsilon>0$ and
let $\zeta'$ and $n_0$ be the parameters given by Lemma \ref{0lemm1} associated to $\varepsilon'\defi \varepsilon/4$.
Take  $\zeta$ such that
$\dis{\frac{2e^T}{\sqrt{2\pi}}\sqrt{\zeta}<\varepsilon/4}$,
$\dis{\frac{2\sqrt{\zeta} e^T}{\sqrt{2\pi}\zeta'}e^{-\frac{\pare{\zeta'}^2}{2\zeta}}<\varepsilon/4}$ and $\pare{e^\zeta-1}e^T\norm{u}_\infty<\varepsilon/4$.
For $r\in\R$, $\delta:=2^{-n}$,  such that $n>n_0$ we first consider  $t,t'\in\corch{t_0,T}\cap\delta\N$ such that $t<t'<t+\zeta$
and we have to prove that
\begin{align}\label{00rr}
\abs{S^{\delta,-}_{t'}u(r)-S^{\delta,-}_{t}u(r)}=\abs {v_{t,t'}+e^{t'-t}G_{t'-t}S^{\delta,-}_tu(r)-S^{\delta,-}_tu(r)}<\varepsilon.
\end{align}
Using \eqref{0eqq3} we get 
\begin{eqnarray}\nn
&&
\hskip-2cm\abs{S^{\delta,-}_{t'}u(r)-S^{\delta,-}_{t}u(r)}\le \frac{\varepsilon}4+\abs{{t'-t}G_{t'-t}S^{\delta,-}_tu(r)-S^{\delta,-}_tu(r)}
\\&&\le \frac{\varepsilon}4+ \abs{{t'-t}G_{t'-t}S^{\delta,-}_tu(r)-G_{t'-t}S^{\delta,-}_tu(r)}+
\abs{G_{t'-t}S^{\delta,-}_tu(r)-S^{\delta,-}_tu(r)}
\label{0dd}\end{eqnarray}
We have
\begin{align}
\label{5.27}
\abs{{t'-t}G_{t'-t}S^{\delta,-}_tu(r)-G_{t'-t}S^{\delta,-}_tu(r)}\le (e^{t'-t}-1)\norm{G_{t'-t}S_t^{\delta,-}u}_\infty\leq 
(e^{t'-t}-1)e^T\norm{u}_\infty<\varepsilon/4,
\end{align}
We next estimate
\begin{eqnarray}\nn
\int_{-\infty}^\infty G_{t'-t}\pare{r,r'}\abs{S_t^{\delta,-}u(r')-S_t^{\delta,-}u(r)}dr'
&\le&
\frac{\varepsilon}4+e^T\int_{\abs{r'-r}\ge\zeta'} G_{t'-t}\pare{r,r'}dr'
\\&\le&  \frac{\varepsilon}4+ \le
\frac{2\sqrt{\zeta}}{\sqrt{2\pi}\zeta'}e^{-\frac{\pare{\zeta'}^2}{2\zeta}}
\label{5.28}
\end{eqnarray}
Inserting the estimates\eqref{5.27} and \eqref{5.28} in \eqref{0dd} we get \eqref{00rr}.

For generic  $t,t'\in [t_0,T]$ such that
$t<t'<t+\zeta$ we consider $\delta=2^{-n}$ as before and we consider $t^-,t^+\in\delta \N$ such that
$t^-\le t<t^-+\delta$ and $t^+-\delta< t\le t^+$. Then 
\begin{align*}
\abs{\rho_n\pare{r,t'}-\rho_n\pare{r,t}}
\le
\max_{t_1,t_2\in [t^-,t^+]\cap\delta\N}\abs{S_{t_2}^{\delta,-}u(r)-S_{t_1}^{\delta,-}u(r)}
\end{align*}
%
Then from \eqref{00rr} we get \eqref{a5.25}.\end{proof}

{\bf{Proof of Theorem \ref{thm5}}}. For any function $w\in L^1$ we define
	$$F(r;w):=\int_r^\infty w(r')dr'$$ 
From Ascoli-Arzel\'a Theorem we have convergence by subsequences of $(\rho_n)_{ n\ge 1}$. Let $\psi$ be any limit point of $\rho_n)$.  Observe that for each $t\in [t_0,T]\cap  \mathcal T_n$ we have that $\rho_n=S^{2^{-n},-}_tu\in L^1$. Since by Proposition \ref{thm031}  $F(r; S^{2^{-n},-}_tu)$ is a non increasing function of $n$ it converges as $n\to\infty$. Then by dominate convergence we have that 
for any $r\in \mathbb R$ and $t\in [t_0,T]\cap  \mathcal T_n$
	\begin{equation}
	\label{corr8.1.12}
\lim_{n\to \infty} F(r; S^{2^{-n},-}_tu) = F(r;\psi(\cdot,t))
 	\end{equation}
Thus all limit functions
$\psi(r,t)$ agree on $t\in [t_0,T]\cap  \mathcal T_n$
and since they are continuous they agree on the whole $[t_0,T]$,
thus the sequence $\rho_n(r,t)$ converges
in sup-norm  as $n\to \infty$ to a continuous function
$\psi(r,t)$ (and not only by subsequences).  Observe that from \eqref{corr8.1.12} we also have
$F(r;S^{2^{-n},-}_tu) \le F(r; \psi(\cdot,t))$ for any $n$ and $t \in  \mathcal T_{n}$. \qed

\section{Proof of Theorem \ref{hydrodynamics}}
\label{s6}

Fix $t > 0$, choose $\delta\in\{2^{-n}t, n \in \mathbb N\}$  and $k$ such that $k\delta=t$. Take $\ux_0$ as in Theorem \ref{hydrodynamics}, that is, iid continuous random variables with density $\rho$.   
By Proposition \ref{domination}, there is a coupling between the barriers and $N$-BBM such that, for increasing $\varphi$,
\begin{align}\label{6.1}
\hat \pi^{N,\delta,-}_t \varphi \,\le\,  \hat \pi^N_t \varphi \,\le\,  \hat \pi^{N,\delta,+}_t \varphi  
\end{align}
where $\hat \pi$ are the empiric measures associated to the coupled processes $\hux$ of Proposition \ref{domination} with initial condition $\ux_0$ in the three coordinates. In Theorem \ref{thm4} we have proven that under this initial conditions,  $\pi^{N,\delta,\pm}_t \varphi$ converge to $\int \varphi\, S^{\delta,\pm}_t\rho$ almost surely and in $L^1$. This limit was proven using only the generic LLN of Proposition \ref{propo12} which only uses the mean and variance of  $g^{\pm}_\varphi$ associated to $\hat\pi^{N,\delta,\pm}_t \varphi$ in \eqref{gbt+} and \eqref{gbt-} as functions of $\cB$ which coincide with the corresponding to  $\hat\pi^{N,\delta,\pm}_t \varphi$ as functions of $\cB^\pm$.  We can conclude that the same convergence holds for the hat-variables.

On the other hand, by \eqref{6.1}, 
\begin{align}
  \big| \pi^{N}_t \varphi -\hat \pi^{N,\delta,\pm}_t \varphi \big| &\le  \big|\pi^{N,\delta,+}_t \varphi - \hat \pi^{N,\delta,-}_t \varphi\big| \le  \|\varphi\|\,c\delta,
\end{align}
by \eqref{0eq4}. We can conclude using Theorem \ref{thm4} that 
\begin{align}
\lim_{N\to\infty}  \big| \pi^{N}_t \varphi - \hbox{$\int \varphi\, S^{\delta,\pm}_t\rho$}\big| &\le  \|\varphi\|\,c\delta, 
\quad\hbox{a.s.\/  and in }L^1. \label{4.199}
\end{align}
Taking $\delta\to0$ along dyadics, we get a function $\int \varphi\,\psi: = \lim_{\delta\to0}\int \varphi\,S^{\delta,\pm}_t\rho $  in $L^1$ and
\begin{align}
  \lim_{N\to\infty}  \big| \pi^{N}_t \varphi - \hbox{$\int \varphi\, \psi$}\big|\quad\hbox{a.s.\/  and in }L^1.\qquad\qed
\end{align}

\section{Proof of Theorem \ref{u-existence}}
			\label{s7}

Fix a density $\rho$ and assume there is a continuous curve $L=(L_t:t\ge 0)$ and density functions $u= (u(r,t):r\in \R, t\ge 0)$ such that $(u,L)$ solves the free boundary problem. It is convenient to stress the semigroup property of the solution so we call the solution $S_t\rho :=u(\cdot,t)$ and notice that the operator $S_t$ is a semigroup.
The following theorem shows that the solution is in between the barriers.
  	\begin{theorem}
	\label{teo18}
Let $(u,L)$ be a solution of the FBP in $(0,T]$. Let $t\in (0,T]$ and $\delta \in \{2^{-n}t: n \in \mathbb N\}$.  Then
 \begin{equation}
	\label{77}
S^{\delta,-}_{t} \ro  \preccurlyeq   S_t\rho \preccurlyeq
S^{\delta,+}_{t}   \ro ,\qquad t=k\delta.
  \end{equation}
	\end{theorem}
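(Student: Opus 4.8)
The statement compares the FBP solution $S_t\rho$ with the deterministic barriers at a dyadic time $t=k\delta$. The natural strategy is to prove the two inequalities by induction on $k$, exploiting (i) the semigroup/Brownian-motion representation of the FBP solution, (ii) the fact that the barrier operators $C_m$ and $e^\delta G_\delta$ preserve the order $\preccurlyeq$ (Proposition \ref{0hh}), and (iii) a one-step comparison that says: over a time increment $\delta$, running the FBP dynamics does ``more cutting'' than $C_1 e^\delta G_\delta$ but ``less cutting'' than $e^\delta G_\delta C_{e^{-\delta}}$. The base case $k=0$ is trivial since all three equal $\rho$.

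\textbf{Upper bound.} For the inductive step $S_{(k+1)\delta}\rho \preccurlyeq S^{\delta,+}_{(k+1)\delta}\rho$, I would first establish the one-step fact $S_\delta v \preccurlyeq C_1 e^\delta G_\delta v$ for any admissible density $v$ of mass $1$ with the appropriate left boundary. To see this, use the Brownian representation \eqref{urtbm}: $\int_a^\infty S_\delta v(r)dr = e^\delta \int v(x) P_x(B_\delta > a, \tau^L > \delta)\,dx \le e^\delta \int v(x) P_x(B_\delta > a)\,dx = \int_a^\infty e^\delta G_\delta v(r)\,dr$, so $S_\delta v \preccurlyeq e^\delta G_\delta v$; since $S_\delta v$ has mass exactly $1$, it is fixed by $C_1$, and by Proposition \ref{0hh}(a) $S_\delta v = C_1 S_\delta v \preccurlyeq C_1 e^\delta G_\delta v = H^+_\delta v$. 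Then, writing $S_{(k+1)\delta}\rho = S_\delta(S_{k\delta}\rho)$, combine this with the induction hypothesis $S_{k\delta}\rho \preccurlyeq S^{\delta,+}_{k\delta}\rho$ and order-preservation of $H^+_\delta$ (Proposition \ref{0hh}(c)) to conclude $S_{(k+1)\delta}\rho \preccurlyeq H^+_\delta S_{k\delta}\rho \preccurlyeq H^+_\delta S^{\delta,+}_{k\delta}\rho = S^{\delta,+}_{(k+1)\delta}\rho$.

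\textbf{Lower bound.} For $S^{\delta,-}_{(k+1)\delta}\rho \preccurlyeq S_{(k+1)\delta}\rho$, I would prove the one-step fact $e^\delta G_\delta C_{e^{-\delta}} v \preccurlyeq S_\delta v$. The intuition: $C_{e^{-\delta}}v$ is $v$ with its leftmost mass of total weight $1-e^{-\delta}$ removed, and removing those particles before diffusing produces fewer survivors to the right of any level than letting the FBP kill particles as they cross $L$ — because over $[0,\delta]$ at most an $e^\delta$-factor of mass can be killed, and the killed particles are the ``leftmost'' in an appropriate stochastic sense. Concretely, I would compare $\int_a^\infty e^\delta G_\delta C_{e^{-\delta}} v$ with $\int_a^\infty S_\delta v = e^\delta \int v(x) P_x(B_\delta>a, \tau^L>\delta)\,dx$: for $a$ to the right of the cutting point of $C_{e^{-\delta}}v$ the inequality reduces to a monotonicity of $e^\delta G_\delta$ applied to $C_{e^{-\delta}}v \preccurlyeq (\text{the sub-density of }v\text{ of surviving particles})$; for $a$ far left it follows from mass bounds ($\|e^\delta G_\delta C_{e^{-\delta}}v\|_1 = 1 = \|S_\delta v\|_1$). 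Then induct exactly as above using order-preservation of $H^-_\delta$.

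\textbf{Main obstacle.} The delicate point is the lower-bound one-step comparison $H^-_\delta v \preccurlyeq S_\delta v$: unlike the upper bound, which follows cleanly by dropping the event $\{\tau^L>\delta\}$, here I must show that the deterministic ``cut mass $1-e^{-\delta}$ from the left, then diffuse'' underestimates the amount of mass the free boundary actually removes, at \emph{every} threshold $a$ simultaneously. This requires controlling the geometry of which Brownian paths are killed by $L$ over $[0,\delta]$ — the killed paths are not simply those starting leftmost — so one needs a coupling or a layer-cake argument showing that the set of surviving paths of $S_\delta v$ dominates (in the $\preccurlyeq$ sense, after the $e^\delta$ growth factor) the diffusion of $C_{e^{-\delta}}v$. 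I expect this to rely on the continuity of $L$ and on the fact that, since $\int_{L_s}^\infty u(\cdot,s) = 1$ for all $s$, the total killed mass over $[0,\delta]$ is exactly $e^\delta - 1 \le$ the mass $e^\delta(1-e^{-\delta})$ removed (before growth) by $C_{e^{-\delta}}$, combined with the fact that killing happens at the left edge $L_s$ which lies to the left of everything surviving.
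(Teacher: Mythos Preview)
Your overall architecture---induction on $k$ via the semigroup property, one-step comparisons $S_\delta v$ vs.\ $H^\pm_\delta v$, and order-preservation of $C_m$, $G_\delta$ from Proposition~\ref{0hh}---is exactly the paper's.  The upper-bound one-step is also essentially the paper's: drop $\{\tau^L>\delta\}$ in the representation \eqref{urtbm} to get $S_\delta v\preccurlyeq e^\delta G_\delta v$, then use $\|S_\delta v\|_1=1$ to pass to $C_1 e^\delta G_\delta v$ (your citation of Proposition~\ref{0hh}(a) is not literally the statement you need, but the required variant---$u\preccurlyeq w$ and $\|u\|_1\le m$ imply $u\preccurlyeq C_m w$---is immediate).

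The genuine gap is in the lower-bound one-step.  Your sketch ``for $a$ to the right of the cutting point the inequality reduces to monotonicity of $e^\delta G_\delta$ applied to $C_{e^{-\delta}}v\preccurlyeq(\text{sub-density of surviving particles})$'' does not work: survival $\{\tau^L>\delta\}$ is a path event, not a function of the starting point, so there is no ``sub-density of $v$ of surviving particles'' to which $G_\delta$ can be applied.  The paper's argument (Proposition~\ref{propM2.1}) instead splits $\rho=\rho_0+\rho_1$ with $\rho_0=C_{e^{-\delta}}\rho$, expands $F(r;S_\delta\rho)$ via \eqref{urtbm}, and reduces the claim to
\[
\int\rho_0(x)\,P_x\bigl(B_\delta\ge r,\ \tau^L\le\delta\bigr)\,dx
\;\le\;
\int\rho_1(x)\,P_x\bigl(B_\delta\ge r,\ \tau^L>\delta\bigr)\,dx.
\]
The two sides have equal total mass by the identity $e^\delta\int\rho\,P_\cdot(\tau^L>\delta)=1$, so the inequality is a stochastic-domination statement.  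The key missing ingredient is the pathwise comparison (quoted from \cite{MR3497333}, \S10.3.2): for continuous $L$, a Brownian motion started on the boundary at time $s$ and run freely to $\delta$ is stochastically \emph{below} a Brownian motion started at any $x>L_0$ and \emph{conditioned} on $\{\tau^L>\delta\}$, i.e.\ $P_{L_s,s}(B_\delta\ge r)\le P_x(B_\delta\ge r\mid \tau^L>\delta)$.  This is what converts the mass identity into the pointwise-in-$r$ inequality; your ``coupling or layer-cake'' intuition is in the right spirit but does not supply this comparison, and without it the argument does not close.
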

We show \eqref{77} first for time $\delta = 2^{-n}t$ and then use induction to extend to times $k\delta$.

			\begin{proposition}
			\label{propM2.1}
For all $r\in\R$ we have
 \begin{align}
		\label{m.2.6}
	F(r;S_t\rho)&\le F(r;S^{\delta,+}_\delta\ro),\\
	\label{m.2.11}
	F(r;S_t\rho)&\ge F(r;S^{\delta,-}_\delta\ro),
\end{align}
\end{proposition}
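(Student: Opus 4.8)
The strategy is to use the Brownian-motion representations of both the FBP solution and the barriers, and compare the ``cut-then-diffuse'' / ``diffuse-then-cut'' operations against the exact free boundary at the single time step $[0,\delta]$. For the upper bound \eqref{m.2.6}, recall from \eqref{urtbm} that
\[
F(r;S_\delta\rho) = \int_r^\infty u(r',\delta)\,dr' = e^\delta\int \rho(x)\,P_x(B_\delta>r,\ \tau^L>\delta)\,dx
\le e^\delta\int \rho(x)\,P_x(B_\delta>r)\,dx = F(r;e^\delta G_\delta\rho),
\]
since dropping the event $\{\tau^L>\delta\}$ only increases the probability. Thus $S_\delta\rho \cle e^\delta G_\delta\rho$ pointwise in the $F$-sense. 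Now apply the cut operator: by \eqref{emt} the solution has total mass $\int_{L_\delta}^\infty u(r',\delta)dr' = 1$, so $\|S_\delta\rho\|_1 = 1$, and since $C_1$ only removes mass from the left down to total mass $1$, and $S_\delta\rho$ already has the correct mass while $e^\delta G_\delta\rho$ has mass $e^\delta>1$, one checks $S_\delta\rho \cle C_1 e^\delta G_\delta\rho = S^{\delta,+}_\delta\rho$. Concretely: for $r\ge L^{\delta,+}_\delta$ we have $F(r;C_1 e^\delta G_\delta\rho) = F(r;e^\delta G_\delta\rho) \ge F(r;S_\delta\rho)$; for $r< L^{\delta,+}_\delta$ we have $F(r;C_1 e^\delta G_\delta\rho) = 1 = \|S_\delta\rho\|_1 \ge F(r;S_\delta\rho)$. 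This gives \eqref{m.2.6}.

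For the lower bound \eqref{m.2.11}, the idea is to compare with the process that first deletes mass $1-e^{-\delta}$ from the left at time $0$ and then diffuses-and-grows. Write $F(r;S^{\delta,-}_\delta\rho) = F(r; e^\delta G_\delta C_{e^{-\delta}}\rho)$. Using the representation \eqref{urtbm} again,
\[
F(r;S_\delta\rho) = e^\delta\int \rho(x)\,P_x(B_\delta>r,\ \tau^L>\delta)\,dx
\ge e^\delta\int_{x>L_0}\rho(x)\,\one\{x > c_\delta\}\,P_x(B_\delta>r,\ \tau^L>\delta)\,dx,
\]
and the plan is to choose the cutting level $c_\delta$ so that the restricted density $\rho\,\one\{\cdot > c_\delta\}$ has mass exactly $e^{-\delta}$ — this is precisely $C_{e^{-\delta}}\rho$ up to the boundary point. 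The key point to verify is that, for initial positions $x$ lying to the right of the cut, the survival constraint $\tau^L>\delta$ can be dropped ``on the level of $F$'': since $L$ is continuous with $L_0 > -\infty$ and $L_{t=0}=L_0$, on a short interval $[0,\delta]$ the curve $L$ stays close to $L_0$, and one uses a reflection / optional-stopping argument together with the fact that contributions from paths that do hit $L$ are compensated by the mass bookkeeping, analogous to the manipulation leading from \eqref{urtbm} to \eqref{emt}. After removing the constraint we are left with $e^\delta G_\delta$ applied to a density of mass $e^{-\delta}$, which is $\cge S^{\delta,-}_\delta\rho$ in the $F$-order; combined with item (c) of Proposition \ref{0hh} this yields \eqref{m.2.11}.

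The main obstacle is the lower bound, specifically justifying that one may discard the free-boundary survival event $\{\tau^L>\delta\}$ after cutting off mass $1-e^{-\delta}$ from the left at time $0$, rather than cutting at time $\delta$. The upper barrier cuts \emph{after} diffusing, which matches the direction in which dropping $\{\tau^L>\delta\}$ increases probability, so it is essentially free; the lower barrier cuts \emph{before} diffusing, so one must argue that the particles killed by $L$ during $[0,\delta]$ are ``already accounted for'' by the left-cut of mass $1-e^{-\delta}$ — i.e. that the cut level $q_{e^{-\delta}}(\rho)$ is far enough to the right of where $L$ can reach in time $\delta$, or that the discrepancy only helps the inequality. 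I expect this to require the continuity of $L$ near $0$ in an essential way and to be the technical heart of the argument; the rest is the order-preservation and mass-accounting bookkeeping from Proposition \ref{0hh}.
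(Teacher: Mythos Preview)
Your argument for the upper bound \eqref{m.2.6} is correct and coincides with the paper's: split on $r\lessgtr L^{\delta,+}_\delta$, use the mass condition for $r$ to the left, and drop the survival event $\{\tau^L>\delta\}$ for $r$ to the right.

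For the lower bound \eqref{m.2.11}, however, your plan has a genuine gap and the heuristic you propose is not the right one. The idea that ``the cut level $q_{e^{-\delta}}(\rho)$ is far enough to the right of where $L$ can reach in time $\delta$'' is neither true in general nor what is needed: $L$ is only assumed continuous, so $L_s$ need not stay below the cut level on $[0,\delta]$, and even if it did, that alone would not let you discard $\{\tau^L>\delta\}$ for the $\rho_0$-particles. The actual mechanism is a \emph{mass trade-off} between the two pieces of $\rho$, not a geometric separation. Write $\rho=\rho_0+\rho_1$ with $\rho_0=C_{e^{-\delta}}\rho$, so $\|\rho_0\|_1=e^{-\delta}$. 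Then
\[
F(r;S_\delta\rho)=F(r;S^{\delta,-}_\delta\rho)
- e^\delta\!\int \rho_0(x)P_x(B_\delta\ge r,\ \tau^L\le\delta)\,dx
+ e^\delta\!\int \rho_1(x)P_x(B_\delta\ge r,\ \tau^L>\delta)\,dx,
\]
so \eqref{m.2.11} reduces to showing the third term dominates the second. The key observation is the exact mass identity
\[
e^\delta\!\int \rho_0(x)P_x(\tau^L\le\delta)\,dx
= e^\delta\!\int \rho_1(x)P_x(\tau^L>\delta)\,dx,
\]
obtained by subtracting $e^\delta\|\rho_0\|_1=1$ from $e^\delta\int\rho\,P_\cdot(\tau^L>\delta)=\|S_\delta\rho\|_1=1$. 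Given this, the inequality follows once one knows that, for $x>L_0$ and $s\in[0,\delta)$,
\[
P_{L_s,s}(B_\delta\ge r)\ \le\ P_x\big(B_\delta\ge r\ \big|\ \tau^L>\delta\big),
\]
i.e.\ a Brownian path restarted on the boundary is stochastically below a Brownian path conditioned to stay above the (continuous) boundary. This stochastic domination is the substantive analytic input (the paper cites \cite{MR3497333}, Section~10.3.2); it is where the continuity of $L$ is actually used, not in any claim about the location of the cut point relative to $L$. Your sketch does not isolate either the mass identity or this conditioning inequality, and without them the lower bound does not go through.
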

                        \begin{proof}
If $r< L^{\delta,+}_\delta$, by definition of $L^{\delta,+}_\delta$ we have
$F(r;S^{\delta,+}_t\ro)= F(L^{\delta,+}_t;\ro)= 1\ge
F(r;S_t\rho)$. 
If $r>L^{\delta,+}_\delta$, we have
 		\begin{align}
			 \nn
F(r;S^{\delta,+}_t\ro)&= e^\delta  \int \ro(x) P_{x}\big(B_\delta \ge r\big)dx 
          \ge e^\delta  \int \ro(x) P_{x}\big(B_\delta \ge r;\tau^L >\delta\big)dx =F(r;S_t\rho),
			\label{m.2.8}
			\end{align}
using the Brownian motion representation \eqref{urtbm} of $S_t\rho$ with $\tau^L=\inf\{t>0: B_t\le L_t\}$. This shows \eqref{m.2.6}.
                      
To show \eqref{m.2.11} recall the cut operator \eqref{cut1} and denote $\rho_0 := C_{e^{-\delta}}\ro$ and $\rho_1:=\ro-\rho_0$. We then have $\int \rho_0(r)dr=e^{-\delta}$ and 
 \begin{equation}
		\label{m.2.10}
	F(r,S^{\delta,-}_\delta\ro)=  e^\delta  \int \rho_0(x) P_{x}\big(B_\delta \ge r\big)dx.
\end{equation}
  We have
	\begin{align}
			 \nn
F(r,S_t\rho)&= e^\delta  \int \ro(x) P_{x}\big(B_\delta \ge r;\tau^L >\delta\big)dx
\\ &=
 e^\delta  \int \ro_0(x) P_{x}\big(B_\delta \ge r\big)dx -
  e^\delta  \int \ro_0(x) P_{x}\big(B_\delta \ge r;\tau^L \le \delta\big)dx \nn \\
  & \qquad +\, e^\delta  \int \ro_1(x) P_{x}\big(B_\delta\ge r;\tau^L >\delta\big)dx.\nn
			\label{m.2.12}
			\end{align}
Thus, recalling \eqref{m.2.10}, it suffices to show
	 \begin{equation}
		\label{m.2.13}
e^\delta \int \ro_0(x) P_{x}\big(B_\delta \ge r;\tau^L \le \delta\big)dx\le 
 e^\delta  \int \ro_1(x) P_{x}\big(B_\delta \ge r;\tau^L >\delta\big)dx,
 \end{equation}
We have that
\begin{equation}
		\label{m.2.15}
  e^\delta
  \int \ro_0(x) P_x(\tau^L\le \delta)dx= e^\delta
  \int \ro_1(x) P_x(\tau^L> \delta)dx,
\end{equation}
where the last identity follows from subtracting the following identities 
\begin{align}
  e^\delta \int (\ro_0(x)+\ro_1(x)) P_x(\tau^L>\delta)dx&=\int S_\delta\ro(x)dx=1\nn \\
e^\delta \int \ro_0(x) dx&=e^\delta\int C_{e^{-\delta}}\ro(x)  dx=1\nn
\end{align}
We rewrite \eqref{m.2.13} as
   \begin{equation}
   \label{aaaa}
   \int \ro_0(x) \int_0^\delta h^L_{x}(ds) P_{L_s,s}\big(B_\delta \ge r\big)dx\le 
   \int \ro_1(x) P_x(\tau^L\ge \delta) P_{x}\big(B_\delta \ge r|\tau^L >\delta\big)dx			
\end{equation}
where $P_{y,s}$ denotes the law of a  Brownian motion starting from $y$ at time $s$ and $h^L_{x}$ denotes the cumulative distribution function of $\tau^L$ under $P_{x,0}$. In Section 10.3.2 of \cite{MR3497333} it has been proved that if $L$ is a continuous curve then for all $r$
\begin{equation*}
P_{L_{t};t}\big(B_\delta \ge r\big)\le P_x\big(B_\delta \ge r\,\big|\, {\tau^L} >\delta\big),\quad x > L_0,\; t\in [0,\delta)
	\end{equation*}
From this and from \eqref{m.2.15} inequality \eqref{aaaa} easily follows. 
\end{proof}

\emph{Remark. } Dividing \eqref{m.2.13} by \eqref{m.2.15}, we have proven the inequality
\begin{align}
\label{339}
 P_{\ro_1}(B_\delta\ge r|\tau^L>\delta)- P_{\ro_0}(B_\delta\ge r|\tau^L\le\delta) \ge 0.
\end{align}
where $P_{\ro_i}$ is the law of Brownian motion with initial distribution \[P_{\ro_i}(B_0\in A) =\|\ro_i\|_1^{-1}\int_A \ro_i(x)dx.\]



\begin{proof}
  [Proof of Theorem \ref{teo18}]
   \label{secM.33}
Recalling the definitions of $C_m$ and $G_t$, 
  Proposition \ref{propM2.1} shows the following inequalities for $n=1$:
  \begin{align}
    \label{uu13}
    \big(e^\delta G_\delta C_{e^{-\delta}}\big)^n\ro \preccurlyeq S_{n\delta}\rho\preccurlyeq \big(C_{1}e^\delta G_\delta\big)^n\ro
   \end{align}
Apply \eqref{uu13} with $n=1$ to $S_{n\delta}\rho$ to get 
\begin{align}
          \label{uu15}
     (e^\delta G_\delta C_{1-e^{-\delta}})S_{n\delta}\rho \preccurlyeq S_\delta S_{n\delta}\rho\preccurlyeq (C_{e^\delta-1}e^\delta G_\delta)S_{n\delta}\rho
\end{align}
Apply each inequality in \eqref{uu13} to the corresponding side in \eqref{uu15} to obtain
\begin{align}
    (e^\delta G_\delta C_{e^{1-\delta}})^{n+1}\ro \preccurlyeq   (e^\delta G_\delta C_{e^{-\delta}})S_{n\delta}\ro \preccurlyeq  S_{(n+1)\delta}\ro\preccurlyeq (C_{1}e^\delta G_\delta)S_{n\delta}\ro\preccurlyeq (C_{1}e^\delta G_\delta)^{n+1}\ro\nn
\end{align}
where we have used that both $G_\delta$ and $C_m$ are monotone, by Proposition \ref{0hh}. 
\end{proof}

\section{Traveling waves}
\label{s8}

\paragraph{Traveling waves} Fix $N$ and let $\ux_t$ be $N$-BBM. Let  $\ux'_t := \{x-\min \ux_t:x\in\ux_t\}$ be the process as seen from the leftmost particle. In this process there is always a particle at the origin. The following theorem has been proven by Durrett and Remenik \cite{MR2932664} for a related Brunet-Derrida process. The proof in this case is very similar so we skipt it. 
\begin{theorem}
  \label{micro-tw}
 $N$-{\rm BBM} as seen from the leftmost particle is Harris recurrent. Denote  $\nu_N$ its unique invariant measure. Under $\nu_N$ the process has an asymptotic speed $\alpha_N$ given by
\begin{align}
  \alpha_N = (N-1)\, \nu_N\big[\min(\ux \setminus \{0\})\big],
\end{align}
that is the rate of branching of the $N-1$ rightmost particles times the expected distance between the leftmost particle an the second leftmost particle. 

$N$-{\rm BBM} starting with an arbitrary configuration converges in distribution to $\nu_N$ and 
\begin{align}
  \lim_{t\to\infty} \frac{\min\ux_t}{t} = \alpha_N.
\end{align}
Furthermore, $\alpha_N$ converges to the asymptotic speed of the first particle in {\rm BBM} with a finite initial configuration:
\begin{align}
\label{ans2}
  \lim_{N\to\infty} \alpha_N = \sqrt 2.
 \end{align}
\end{theorem}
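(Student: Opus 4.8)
\emph{Plan.} The theorem is proved by adapting the argument of Durrett and Remenik \cite{MR2932664}; here I outline the structure and the few points where the present model differs. Throughout, work with the process $\xi_t:=\{x-\min\ux_t:x\in\ux_t\}$ seen from the leftmost particle, a Feller Markov process on the Polish space $\Xi_N$ of $N$-point multisets of $\R$ with minimum $0$.

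\emph{Harris recurrence.} To show that $\xi_t$ is Harris recurrent with a unique invariant law $\nu_N$, combine a local minorization with a Lyapunov estimate. For the minorization: for every $R>0$ there are $t_R,\varepsilon_R>0$ and a probability measure $\mu_R$ on $\Xi_N$ with $P_\xi(\xi_{t_R}\in\cdot)\ge\varepsilon_R\mu_R(\cdot)$ uniformly over $\xi$ with $\mathrm{diam}(\xi)\le R$; this is obtained by requiring that no branching occur on $[0,t_R]$ (probability bounded below, uniformly) and then using that the diffused unbranched configuration has a density bounded below on a fixed compact set, uniformly in the starting point of bounded diameter. For recurrence, show that $\mathrm{diam}(\xi_t)$ is positive recurrent, so that $V(\xi)=e^{c\,\mathrm{diam}(\xi)}$ is a Foster--Lyapunov function for $c$ small: a leftmost particle that lags behind is erased at rate $\ge 1$ and then jumps forward past the others, while a particle that wanders far to the right performs an unbiased Brownian motion and, within a bounded time, is reabsorbed and becomes liable to removal; the details are as in \cite{MR2932664}. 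Standard Harris theory then yields existence and uniqueness of $\nu_N$, convergence in total variation of the law of $\xi_t$ to $\nu_N$ from any initial configuration, and finite (exponential) moments for $\mathrm{diam}$ under $\nu_N$, in particular $\nu_N[\min(\ux\setminus\{0\})^2]<\infty$.

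\emph{The speed formula.} Write $L_t:=\min\ux_t$. Between branchings $L_t$ performs a rate-one Brownian motion; at a branching event the minimum moves only when the leftmost particle is the unique minimizer and one of the other $N-1$ particles branches, in which case $L_t$ jumps forward by $\min(\xi_{t^-}\setminus\{0\})$, the gap to the second particle (in all other cases the set of positions, hence $L_t$, is unchanged). Since $\mathbf 1\{\text{leftmost unique}\}\,\min(\xi\setminus\{0\})=\min(\xi\setminus\{0\})$ identically (if the minimum has multiplicity $\ge2$ then $\min(\xi\setminus\{0\})=0$), one obtains
\begin{equation*}
L_t=L_0+\beta_t+J_t,\qquad M_t:=J_t-\int_0^t(N-1)\,\min(\xi_s\setminus\{0\})\,ds,
\end{equation*}
with $\beta$ a continuous martingale, $\langle\beta\rangle_t=t$, $J$ the increasing jump part, and $M$ a martingale with $\langle M\rangle_t=\int_0^t(N-1)\min(\xi_s\setminus\{0\})^2\,ds$. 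By Birkhoff's ergodic theorem for Harris recurrent processes (valid from any start), $\tfrac1t\int_0^t(N-1)\min(\xi_s\setminus\{0\})\,ds\to(N-1)\,\nu_N[\min(\ux\setminus\{0\})]=:\alpha_N$ a.s., while $\beta_t/t\to0$ and, using $\nu_N[\min(\ux\setminus\{0\})^2]<\infty$, $M_t/t\to0$ by the law of the iterated logarithm. Hence $\min\ux_t/t\to\alpha_N$ a.s.\ from an arbitrary initial configuration. (Alternatively, use the regeneration times furnished by the minorization and a renewal--reward argument.)

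\emph{The limit $\alpha_N\to\sqrt2$.} For the upper bound, recall from \eqref{zbt}--\eqref{xt7} that $\ux_t\subseteq\uz_t$, where $\uz_t$ is the ranked BBM started from $N$ particles; thus $\min\ux_t\le\max\uz_t$, and dividing by $t$ and letting $t\to\infty$ gives $\alpha_N\le\sqrt2$, since the maximum of BBM travels at speed $\sqrt2$. The lower bound $\liminf_N\alpha_N\ge\sqrt2$ is the delicate Brunet--Derrida direction and is the main obstacle: fixing $v<\sqrt2$, one compares the $N$-BBM with branching Brownian motion absorbed below a line of slope $v$, a process which survives with positive probability and then produces an unbounded number of particles precisely because $v<\sqrt2$; exploiting that the $N$-cutoff stays inactive until more than $N$ particles have been created, a coupling argument (together with monotonicity of $\alpha_N$ in $N$) gives $\alpha_N\ge v$ for all $N$ large, and letting $v\uparrow\sqrt2$ yields \eqref{ans2}. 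This step cannot be carried out by a monotone domination --- erasing particles from the left shifts a configuration to the right and hence \emph{speeds up} the front, the wrong direction --- so one must invoke, as in \cite{MR2932664}, the genuinely finite-$N$ mechanism underlying the Brunet--Derrida speed correction.
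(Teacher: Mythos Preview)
The paper does not give a proof of this theorem: it states explicitly that the result was proven by Durrett and Remenik for a related Brunet--Derrida process, that ``the proof in this case is very similar so we skip it,'' and refers to B\'erard--Gou\'er\'e and Durrett--Mayberry for the speed limit \eqref{ans2}. Your proposal does precisely what the paper prescribes---adapt the Durrett--Remenik argument---so in that sense you are on the same track, and there is nothing in the paper to compare against at a finer level.

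One remark on your Lyapunov sketch: the sentence ``a particle that wanders far to the right \dots\ is reabsorbed and becomes liable to removal'' is not quite right, since in $N$-BBM only the leftmost particle is ever removed; the rightmost particle is never killed. The diameter is controlled not because the rightmost particle comes back but because the leftmost particle jumps forward at rate $N-1$ by the gap to the second particle, so the left edge chases the bulk. This is the mechanism you describe in the preceding clause, and it is what actually drives the drift condition; the clause about the right particle should be dropped or rephrased.
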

The analogous to limit \eqref{ans2} was proven by Berard and Gou\'er\'e \cite{MR2669438} and Durrett and Mayberry \cite{MR2639750} for Brunet-Derrida systems.  

The traveling wave solutions of the free boundary problem \eqref{u2}\eqref{u3}\eqref{u4} are of the form $u(r,t)=w(r-\alpha t)$, where $w$ must satisfy
\begin{align}
\label{macro-tw}
  \frac12 w'' + \alpha w' + w =0, \quad w(0)=0, \quad \int_0^\infty w(r)dr =1. 
\end{align}
Groisman and Jonckheere \cite{gj1,gj2} observed that 
for each speed $\alpha\ge \alpha_c = \sqrt2$ there is a solution $w_\alpha$ given by
\begin{align}
w_\alpha(x) &=
           \begin{cases}
             M_\alpha\, x e^{-\alpha x}&\hbox { if }\alpha = \sqrt 2\\
M_\alpha\, e^{-\alpha x}\sinh \big(x\sqrt{\alpha^2-2}  \big)&\hbox { if }\alpha > \sqrt 2\\
           \end{cases}
\end{align}
where $M_\alpha$ is a normalization constant such that $\int w_\alpha=1$. In fact $w_\alpha$ is the unique quasi stationary distribution for Brownian motion with drift $-\alpha$ and absorption rate $w'(0)=1$; see Proposition 1 of Mart\'\i nez and San Mart\'\i n \cite{MR1303922}. More precisely, calling $\cL_\alpha w= \frac12 w'' +\alpha w'$, we have that $w_\alpha$ is the unique eigenvector for $\cL_\alpha$ with eigenvalue $-1$. See \cite{gj1} for the relation between quasi stationary distributions for absorbed Brownian motion and traveling wave solutions for the free boundary problem.

Let $\ux_t$ be the $N$-BBM process with  initial configuration sampled from the stationary measure $\nu^N$. Show that the empirical distribution of $\ux_t$ converges to a measure with density $w_{\sqrt 2}(\cdot-t\sqrt 2)$, as $N\to\infty$. This would be a \emph{strong selection principle} for $N$-BBM \cite{gj1,MR3568046}; the weak selection principle is already contained in \eqref{ans2}, the stationary speed for the finite system converges to the minimal speed in the macroscopic system. A way to show this limit would be to control the particle-particle correlations in the $\nu_N$ distributed initial configuration. If instead we start with independent particles with distribution $w_{\sqrt 2}$, then we can use Theorem 1 and the fact that $w_{\sqrt 2}(r-t\sqrt 2)$ is a strong solution of the free boundary problem to prove converge of the empirical measure to this solution.


\section*{Acknowledgments}  PAF thanks Pablo Groisman, Matthieu Jonckheere and Julio Rossi for illuminating discussions on qsd and free boundary problems and  existence of solutions for the pde. PAF thanks Gran Sasso Science Institute and University of Paris Diderot for warm hospitality. We thank kind hospitality at Institut Henry Poincar\'e, during the trimester \emph{Stochastic dynamics out of equilibrium} where part of this work was performed. 

\bibliographystyle{alpha}

\bibliography{nbbm}

\def\at{,\ }
\def\email#1{{\tt #1}}
{
Anna De Masi\at
              Universit\`a di L'Aquila, 67100 L'Aquila, Italy \\
                            \email{demasi@univaq.it}           
         
  Pablo A. Ferrari\at
              Universidad de Buenos Aires, DM-FCEN, 1428 Buenos Aires, Argentina\\
              \email{pferrari@dm.uba.ar}
 
          Errico Presutti\at
              Gran Sasso Science Institute, 67100 L'Aquila, Italy\\
              \email{errico.presutti@gmail.com} 

           Nahuel Soprano-Loto\at
              Gran Sasso Science Institute, 67100 L'Aquila, Italy\\
              \email{sopranoloto@gmail.com}
}

\end{document}